\definecolor{bwgreen}{rgb}{0.183,1,0.5}
\definecolor{bwmagenta}{rgb}{0.7,0.0,0.1}
\definecolor{bwblue}{rgb}{0.317,0.161,1}
\DeclareFontFamily{OT1}{rsfs}{}
\DeclareFontShape{OT1}{rsfs}{n}{it}{<-> rsfs10}{}
\DeclareMathAlphabet{\mathscr}{OT1}{rsfs}{n}{it}
\DeclareFontFamily{OT1}{pzc}{}
\DeclareFontShape{OT1}{pzc}{n}{it}{<->s*[2.2]pzc}{}
\DeclareMathAlphabet{\mathpzc}{OT1}{pzc}{b}{sl}
\newcommand{\Rmnum}[1]{\expandafter\@slowromancap\romannumeral #1@}
\DeclareMathOperator{\id}{id}
\DeclareMathOperator{\Frac}{Frac}
\DeclareMathOperator{\Hom}{Hom}
\DeclareMathOperator{\Ker}{Ker}
\DeclareMathOperator{\Gal}{Gal}
\DeclareMathOperator{\Spec}{Spec}
\DeclareMathOperator{\et}{\acute{e}t}
\DeclareMathOperator{\dR}{dR}
\DeclareMathOperator{\Cris}{Cris}
\DeclareMathOperator{\cris}{cris}
\DeclareMathOperator{\Lie}{Lie}
\DeclareMathOperator{\tors}{tors}
\DeclareMathOperator{\free}{fr}
\DeclareMathOperator{\Mod}{Mod}
\DeclareMathOperator{\Fil}{Fil}
\DeclareMathOperator{\st}{st}
\DeclareMathOperator{\tor}{tor}
\DeclareMathOperator{\im}{im}
\DeclareMathOperator{\Tor}{Tor}
\DeclareMathOperator{\pdiv}{pdiv}
\DeclareMathOperator{\PD}{PD}
\renewcommand*{\c}{\ensuremath{\mathbf{C}}}
\newcommand*{\Z}{\ensuremath{\mathbf{Z}}}
\newcommand*{\Q}{\ensuremath{\mathbf{Q}}}
\newcommand*{\Kbar}{\overline{K}}
\newcommand*{\m}{\mathfrak{M}}
\newcommand*{\s}{\mathfrak{S}}
\newcommand*{\C}{\mathbf{C}}
\newcommand*{\scrS}{\mathscr{S}}
\newcommand*{\scrG}{\mathscr{G}}
\newcommand*{\J}{\mathcal{J}}
\newcommand*{\scrM}{\mathscr{M}}
\renewcommand*{\O}{\mathscr{O}}
\newcommand*{\X}{\mathcal{X}}
\newcommand*{\scrX}{\mathscr{X}}
\newcommand*{\Y}{\mathcal{Y}}
\newcommand*{\RG}{R\Gamma}
\newcommand*{\scrD}{\mathscr{D}}
\newcommand*{\D}{\ensuremath{\mathbf{D}}}
\renewcommand*{\int}{\ensuremath{\mathrm{int}}}
\newcommand*{\e}{\ensuremath{\mathfrak{e}}}
\renewcommand*{\u}[1]{\underline{#1}}
\renewcommand*{\o}[1]{\overline{#1}}
\newcommand*{\wt}[1]{\widetilde{#1}}
\newcommand*{\inj}{\hookrightarrow}
\newcommand*{\onto}{\twoheadrightarrow}
\renewcommand{\tilde}{\widetilde}
\renewcommand{\bar}{\overline}
\DeclareMathOperator{\BT}{BT}
\theoremstyle{plain}
  \newtheorem{theorem}{Theorem}
  \newtheorem{proposition}[theorem]{Proposition}
  \newtheorem{lemma}[theorem]{Lemma}
  \newtheorem{corollary}[theorem]{Corollary}
\theoremstyle{definition}
  \newtheorem{definition}[theorem]{Definition}
\theoremstyle{remark}
  \newtheorem{example}[theorem]{Example}
  \newtheorem{remark}[theorem]{Remark}
\numberwithin{theorem}{section}
\numberwithin{equation}{section}
\date{First version December 4, 2014; Revised version \today.}
\begin{document}
\bibliographystyle{plain}

\title{Breuil--Kisin Modules via Crystalline cohomology}

\author{Bryden Cais}

\author{Tong Liu}

%\address{University of Arizona, Tucson}
%\curraddr{Department of Mathematics, 617 N. Santa Rita Ave., Tucson AZ. 85721}
%\email{cais@math.arizona.edu}
\thanks{This project began when the first author visited Christopher Davis
and Lars Hesselholt
in Copenhagen, and we are grateful for the hospitality
provided by the University of Copenhagen and the
many stimulating discussions that occurred there.
We also thank Bhargav Bhatt, Kiran Kedlaya, Peter Scholze, and Teruhisa Koshikawa  for their
helpful input on the project, and Yu Min for bringing an error in a previous version of this paper
 to our attention.}
\thanks{The first author is partially supported by a Simons Foundation Collaboration Grant.
The second author is partially supported by NSF grant DMS-1406926.}

\subjclass[2010]{Primary: 14F30 %p-adic cohomology, crystalline cohomology
Secondary: 11F80} % Galois representations

\keywords{Breuil--Kisin Modules, crystalline cohomology}
%\dedicatory{}

\begin{abstract}
	For a perfect field $k$ of characteristic $p>0$
	and a smooth and proper formal scheme $\X$ over the ring of integers
	of a finite and totally ramified extension $K$ of $W(k)[1/p]$,
	we propose a cohomological construction of the Breuil--Kisin module attached
	to the $p$-adic \'etale cohomology $H^i_{\et}(X_{\o{K}},\Z_p)$.
	We then prove that our proposal works when $p>2$, $i < p-1$,
	and the crystalline cohomology of the special fiber of $\X$ is
	torsion-free in degrees $i$ and $i+1$.
\end{abstract}

\maketitle

%\tableofcontents

\section{Introduction}\label{Intro}

Let $k$ be a perfect field of characteristic $p>0$ and $K$ a totally
ramified extension of degree $e$ over $W(k)[1/p]$. Fix an algebraic closure $\o{K}$ of $K$
and denote by $\c_K$ its $p$-adic completion.
  If $\X$  is a smooth proper formal scheme over $\O_K$
with (rigid analytic) generic fiber $X$, then the (torsion free part of the) $p$-adic \'etale cohomology
$T^i:=H^i_{\et}(X_{\Kbar},\Z_p)/\tors$ is a $G_K:=\Gal(\o{K}/K)$-stable lattice
in a crystalline representation. Functorially associated to
the $\Z_p$-linear dual $(T^i)^{\vee}$ is its Breuil--Kisin module $M^i$ over $\s:=W(k)[\![u]\!]$ in the sense
of\footnote{In fact, we work with a slightly different normalization than \cite{KisinFcrystal},
which is more closely related to (crystalline) cohomology; see Definition \ref{def:BKvar}
and Remark \ref{ClassicalKisin} for details.} \cite{KisinFcrystal}.  It is natural to ask
for a {\em direct cohomological} construction of $M^i$.

A cohomological construction of $M^i$ ``up to $p$-isogeny'' can be found in the Ph.D. thesis \cite{Bar} of B\"ar.
Writing $\O$ for the ring of rigid analytic functions on the open unit disk over $W(k)[1/p]$,
B\"ar constructs a perfect complex $\scrM(\X)$ of
sheaves of $\varphi$-modules over $\O$ on the special fiber $\X_k$
and a natural isomorphism of $\varphi$-modules
$H^i(\X_k,\scrM(\X)) \simeq M^i \otimes_{\s} \O$, at least when $\X$ is a scheme.
As scalar extension along $\s\rightarrow \O$
induces an equivalence between the isogeny category of Breuil--Kisin modules over $\s$
and the category of $\varphi$-modules over $\O$ that are pure of slope zero \cite[Lemma 1.3.13]{KisinFcrystal},
B\"ar's construction can indeed be viewed as providing a cohomological description of $M^i$ up to $p$-isogeny.
Unfortunately, extracting $M^i$ (up to $p$-isogeny) from $M^i\otimes_{\s} \O$
is rather indirect
%involves ``gluing'' the latter with a functorial descent of $M^i\otimes \R$ to $\R^b$,
%where $\R$ (respectively $\R^b$) is the (bounded) Robba ring
({\em cf.} the proof of {\em loc.~cit.}),
so B\"ar's work does not yield as explicit a construction
as one would like.

More recently, the work of Bhatt, Morrow, and Scholze \cite{SBM}
associates to any smooth and proper formal scheme $\scrX$ over $\O_{\c_K}$
a perfect complex of $A_{\inf}$-modules $\RG_{A_{\inf}}(\scrX)$
whose cohomology groups are Breuil--Kisin--Fargues modules in the sense of \cite[Definition 4.22]{SBM}
(see also Definition \ref{BKFDef} below), and which is an avatar of all $p$-integral $p$-adic cohomology groups of $\scrX$.
One can deduce from their theory that if $\scrX=\X\times_{\O_K} \O_{\c_K}$ is defined
over $\O_K$,
then the base change $M^i_{A_{\inf}}:=M^i \otimes_{\s} A_{\inf}$
is a Breuil--Kisin--Fargues module, and one has a canonical identification
$H^i(\RG_{A_{\inf}}(\scrX)) \simeq M^i_{A_{\inf}}$
under the assumption that $H^i_{\cris}(\X_k/W(k))$ is torsion-free.
Note that with this assumption, $H^i_{\et}(X_{\o{K}},\Z_p)$ is also torsion free
by Theorem 14.5 and Proposition 4.34 of \cite{SBM}.
However,
this beautiful cohomological description of $M^i \otimes_{\s} A_{\inf}$
does not yield a cohomological
interpretation of the Breuil--Kisin module $M^i$ over $\s$, but only of its scalar extension
to $A_{\inf}$.

In this paper, assuming that $H^j_{\cris}(\X_k/W(k))$ is torsion-free for $j=i, i+1$,
we will provide a direct, cohomological construction of $M^i$
over $\s$, at least when $i < p-1$.  To describe our construction, we must first
introduce some notation.

Fix a uniformizer $\pi_0$ of $\O_K$, and let $E\in \s$ be the
minimal polynomial of $\pi_0$ over $W(k)$, normalized to have constant term $p$.
For each $n\ge 1$ choose $\pi_n \in \O_{\o{K}}$ satisfying $\pi_n^p=\pi_{n-1}$
and define $K_n:=K(\pi_n)$ and $K_{\infty}:=\cup_n K_n$.
For $n\ge 0$ we define $\s_n:=W(k)[\![u_n]\!]$, equipped with the unique continuous
Frobenius endomorphism $\varphi$ that acts on $W(k)$ as the unique lift $\sigma$
of the $p$-power map on $k$ and satisfies $\varphi(u_n)=u_n^p$.
We write
$\theta_n: \s_n\twoheadrightarrow \O_{K_n}$ for the continuous
$W(k)$-algebra surjection carrying $u_n$ to $\pi_n$, and we view $\s_n$ as a subring of $\s_{n+1}$
by identifying $u_n=\varphi(u_{n+1})$; this is compatible (via the $\theta_n$)
with the canonical inclusions $K_n\hookrightarrow K_{n+1}$.
We then see that $\varphi:\s_{n+1}\rightarrow \s_n$ is a ($\sigma$-semilinear)
isomorphism, so for $n\ge 1$ the element
\begin{equation}
	z_n:=E \varphi^{-1}(E)\cdots \varphi^{1-n}(E) = E(u_0)E^{\sigma^{-1}}(u_1)E^{\sigma^{-2}}(u_2)\cdots E^{\sigma^{1-n}}(u_{n-1})
	\label{Def:z_n}
\end{equation}
makes sense in $\s_n$.
% and, as a polynomial in $u_n$, has zero-set
%the Galois-conjugates of $\pi_1,\pi_2,\ldots,\pi_n$.
Defining $z_0:=1$, we then have $\varphi(z_n)=\varphi(E)z_{n-1}$
for $n\ge 1$.

Write $S_n$ for the $p$-adic completion of the PD-envelope of $\theta_n$,
equipped with the $p$-adic topology.
This is naturally a PD-thickening of $\O_{K_n}$, equipped with
a descending filtration $\{\Fil^i S_n\}_{i\ge 0}$
obtained by taking the closure in $S_n$ of the usual PD-filtration.
The inclusions $\s_n\hookrightarrow \s_{n+1}$
uniquely extend to $S_n\hookrightarrow S_{n+1}$, and we henceforth consider $S_{n}$
as a subring of $S_{n+1}$ in this way.
Note that $\varphi$ uniquely extends to a continuous endomorphism $\varphi:S_n\rightarrow S_n$
which has image contained in $S_{n-1}$ (see Lemma \ref{KeyA}).
We identify $u_0=u$ and $\s_0=\s$, and will frequently write $S:=S_0$.

Given a smooth and proper formal scheme $\X$ over $\O_K$, we write $\X_{n}:=\X\times_{\O_K} \O_{K_n}/(p)$
for the base change to $\O_{K_n}/(p)$.  As $S_n\twoheadrightarrow \O_{K_n}/(p)$
is a divided power thickening, we can then form the crystalline
cohomology $\scrM^i_n:=H^i_{\cris}(\X_n/S_n)$ of $\X_n$ relative to $S_n$.
It is naturally a finite-type $S_n$-module with a $\varphi$-semilinear
endomorphism $\varphi_{\scrM}:\scrM^i_n\rightarrow \scrM^i_n$
and a descending and exhaustive filtration $\Fil^j \scrM^i_n:=\scrM^i_n \cap \Fil^j (\scrM^i_n[1/p])$ induced by the usual Hodge filtration
on de~Rham cohomology via the canonical comparison isomorphisms
$$
\scrM^i_n [1/p] \simeq S_n [1/p] \otimes_{W(k)} H^i_{\cris}(\X_{ k }/W(k))
\quad\text{and}\quad
K \otimes_{W(k)}  H^i_{\cris}(\X_{k }/W(k)) \simeq H^ i_{\dR}(X/ K).
$$
We refer to \S \ref{pdivGone} and in particular the discussion above Theorem \ref{thm-main-crystokisin} for details of this construction, and to \S\ref{subsec-1} for a more geometric  interpretation of this filtration via the crystalline cohomology of the divided powers of the canonical PD-ideal.
Give the localization $S_n[z_n^{-1}]$ the $\Z$-filtration by powers of $z_n$,
and equip $\scrM_n^i[z_n^{-1}]=\scrM_n\otimes_{S_n} S_n[z_n^{-1}]$ with the
tensor product filtration; that is, $\Fil^j(\scrM_n^i[z_n^{-1}]) := \sum_{a+b=j} z_n^a \Fil^b \scrM_n^i$,
with the sum ranging over all integers $a,b$ and taking place inside $\scrM_n^i[z_n^{-1}]$.
We then define
\begin{align*}
	\u{M}^i(\X) &= \varprojlim_{\varphi,n} \Fil^0(H^i_{\cris}(\X_n/S_n)[z_n^{-1}])\\
	&:=\{ \{\xi_n\}_{n\ge 0}\ :\ \xi_n\in \Fil^0(\scrM_n^i\otimes_{S_n} S_n[z_n^{-1}])\
	\text{and}\ (\varphi_{\scrM}\otimes\varphi)(\xi_{n+1})=\xi_{n}\ \text{for all}\ n\ge 0\}.	
\end{align*}
We equip $\u{M}^i(\X)$ with the Frobenius map
$\varphi_{\u{M}}(\{\xi_n\}):=\{(\varphi_{\scrM}\otimes \varphi)(\xi_n)\}_n$ and define
$$\Fil^j \u{M}^i(\X):=\{\{\xi_n\}_n \in \u{M}^i(\X)\ :\ \xi_0\in \Fil^j(\scrM_0^i)\}.$$
We view $\u{M}^i(\X)$ as an $\s=\s_0$-module
by $g(u)\cdot\{\xi_n\}_{n\ge 0}:=\{g^{\sigma^{-n}}(u_n)\cdot \xi_n\}_{n\ge 0}$.

We can now state our main result, which provides
a cohomological description of Breuil--Kisin modules in Hodge--Tate weights at most $p-2$:

\begin{theorem}\label{MainIntro}
	Assume that $p>2$.
	Let $\X$ be a smooth and proper formal scheme
	over $\O_K$ and $i$ an integer with $0\le i < p-1$,
	and let $M$ be the Breuil--Kisin module associated to
	the $\Z_p$-dual of the Galois lattice $H^i_{\et}(X_{\o{K}},\Z_p)/\tors$.
	If $H^j_{\cris}(\X_k/W(k))$ is torsion-free for $j=i, i+1$, then there is a natural
	isomorphism of Breuil--Kisin modules
	\begin{equation*}
		M \simeq \u{M}^i(\X).
	\end{equation*}
\end{theorem}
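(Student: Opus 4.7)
My plan has three stages: identify the crystalline cohomology with a base change of $M$, compute the $\Fil^0$ pieces in terms of $M$, then pass to the Frobenius limit.

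\textbf{Stage 1: Identify $\scrM^i_n$ as a base change of $M$.} The torsion-freeness of $H^j_{\cris}(\X_k/W(k))$ for $j=i,i+1$ ensures, by standard spectral sequence arguments, that $\scrM^i_0 = H^i_{\cris}(\X_1/S)$ is $p$-torsion free. Berthelot's crystalline base-change theorem applied to the PD-morphism $S \to S_n$ (together with $p$-torsion freeness) then gives that the natural map
\begin{equation*}
	\scrM^i_0 \otimes_S S_n \longrightarrow \scrM^i_n
\end{equation*}
is an isomorphism of filtered $\varphi$-modules. Next, I invoke the crystalline comparison theorem of Faltings, combined with Breuil's theory and Kisin's descent from Breuil modules to Breuil--Kisin modules (all of which apply in the range $i<p-1$ under our torsion-freeness hypothesis, via the work of Breuil, Caruso--Liu, and \cite{KisinFcrystal}), to identify $\scrM^i_0$ with the Breuil module $S\otimes_{\varphi,\s}M$ as a filtered $\varphi$-module over $S$. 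Combining these reduces the theorem to a purely structural assertion: for the Breuil--Kisin module $M$ of height $r\le i <p-1$, the natural map $m\mapsto \{1\otimes 1\otimes m\}_n$ induces an isomorphism
\begin{equation*}
	M \xrightarrow{\ \sim\ } \varprojlim_{\varphi,n} \Fil^0\bigl((S_n\otimes_{\varphi,\s_n}(\s_n\otimes_{\s} M))[z_n^{-1}]\bigr).
\end{equation*}

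\textbf{Stage 2: Unpack the $\Fil^0$ at each level.} By the definition of the tensor filtration, $\Fil^0(\scrM^i_n[z_n^{-1}]) = \sum_{j\ge 0} z_n^{-j}\Fil^j\scrM^i_n$ inside $\scrM^i_n[z_n^{-1}]$. I would use Kisin's relation between the Breuil filtration and the height $r$ of $M$—namely, $\Fil^j\scrM^i_n$ is characterized by the condition that its image under (a suitable normalization of) $1\otimes\varphi_M$ lies in $E^j\cdot\scrM^i_n$—to rewrite each summand in terms of elements of $M_n := \s_n\otimes_\s M$ and its Frobenius. The key identity here is $\varphi(E(u_{n+1})) = E(u_n)$, together with $z_n = \prod_{j=0}^{n-1}E(u_j)$, which is exactly the polynomial encoding how the height condition propagates through the tower. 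The bound $r < p-1$ ensures that the PD-divisions occurring when one expresses $E^j$-multiples inside $S_n$ remain in the natural integral structure, so no spurious denominators appear.

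\textbf{Stage 3: Pass to the Frobenius limit.} Injectivity of the map in Stage 1 is clear from the flatness of $\s\to\s_n\to S_n$. For surjectivity, given a compatible system $\{\xi_n\}$, the Frobenius compatibility $(\varphi_{\scrM}\otimes\varphi)(\xi_{n+1})=\xi_n$ combined with $\varphi(z_{n+1})=\varphi(E)z_n$ allows one to ``trade in'' a pole at $z_n$ for a pole at $z_{n+1}$ after applying $\varphi^{-1}$. Iterating and using that the Breuil filtration $\Fil^j\scrM^i_n$ vanishes to arbitrarily high order in $z_{n+m}$ as $m\to\infty$, one shows every pole can be cleared, forcing $\xi_0\in \calM$ to actually lie in the image of $M$ under $M\hookrightarrow S\otimes_{\varphi,\s}M$. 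This is essentially a reformulation of Kisin's Frobenius-descent, transported along the tower $\{S_n\}$.

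\textbf{Main obstacle.} The heart of the argument is Stage 3: showing that the abstract compatible systems in the inverse limit descend to genuine elements of $M$. The restriction $i<p-1$ is critical there because it is precisely the range in which Kisin's Frobenius-descent argument applies without extra subtlety from PD-arithmetic—outside this range, the interaction between $\varphi$ and the PD-filtration on $S_n$ introduces corrections that would need further control. A secondary technical difficulty is verifying that the comparison isomorphisms in Stage 1 are genuinely compatible with \emph{all} the structures (not just Frobenius but also the tensor filtration and the transition maps between $S_n$ and $S_{n+1}$), which is a careful but essentially formal check.
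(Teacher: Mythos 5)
Your overall architecture (identify $H^i_{\cris}(\X_0/S)$ integrally with the Breuil module $\u{\scrM}(M)$, then show the Frobenius inverse limit recovers $M$) is the same as the paper's, but both of the hard steps are deferred to citations that do not actually cover what is needed, and in each case the missing content is precisely where the paper does its work.

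In Stage 1, the integral, filtration-compatible identification $\scrM^i_0 \simeq S\otimes_{\varphi,\s}M$ is not available from ``Faltings + Breuil + Kisin.'' Faltings/Tsuji give the comparison only after inverting $p$; Kisin's descent \cite[2.2.7, A.6]{KisinFcrystal} is restricted to $p$-divisible groups (height $\le 1$); and Breuil--Caruso's integral comparison requires the low-ramification condition $ei<p-1$, whereas the theorem allows arbitrary $e$. This identification is exactly Theorem \ref{thm-main-crystokisin}(2)--(3) of the paper, whose proof goes through the Bhatt--Morrow--Scholze $A_{\inf}$-cohomology: one needs Proposition \ref{prop-isotiliota} to show $A_{\cris}\otimes_{A_{\inf}}\wt{M}^i \to H^i_{\cris}(\X_{\O_{\o{K}}/(p)}/A_{\cris})$ is an isomorphism, and that step is where the torsion-freeness of $H^{i+1}_{\cris}(\X_k/W(k))$ enters (via $u$-torsion-freeness of $\wt{M}^{i+1}$), from which finite-freeness of $\scrM^i$ over $S$ is descended along $S/(p^n)\to A_{\cris}/(p^n)$. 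Your attribution of the degree-$(i+1)$ hypothesis to a spectral-sequence argument for $p$-torsion-freeness of $\scrM^i_0$ misidentifies its role and leaves the genuinely hard comparison unproved.

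In Stage 3, the surjectivity argument is not an argument. The filtration $\Fil^j\scrM^i_n$ is \emph{zero} for $j>r$, so it cannot ``vanish to arbitrarily high order in $z_{n+m}$''; and what you call Kisin's Frobenius-descent is, in \cite{KisinFcrystal}, an indirect argument through Galois representations limited to height $1$, not a tool that applies here. The actual mechanism in the paper is quantitative: $\varphi$ contracts $\Fil^i S_n$ into $\s_{n-1}+\Fil^{pb(i)}S_{n-1}$ with $b(i)=\lceil i(p-2)/(p-1)\rceil$ (Lemma \ref{KeyA}), and after writing $\xi_n=z_n^{-r}(e_1,\ldots,e_d)(Ax_n+y_n)$ and deriving the recursion $\varphi(Ax_{n+1}+y_{n+1})=\varphi(A)(Ax_n+y_n)$, one iterates this contraction to push the PD-error into $\bigcap_j\Fil^jS=0$ (Lemmas \ref{KeyB}, \ref{KeyC}); a separate mod-$p$ divisibility argument using $z_n\equiv u_n^{pe(p^n-1)/(p-1)}\bmod p$ is then needed to remove a residual factor of $p$. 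None of these ideas appear in your sketch, and without them the surjectivity claim is unsupported.
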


A few remarks are in order regarding the hypotheses of Theorem \ref{MainIntro}.
The assumption that $0 \le i < p-1$ is essential since for $i \ge p-1$
the category of height-$i$ Breuil modules (the structure with which $H^i_{\cris} (\X_0/S)$ has been endowed)
is {\em not} equivalent to the category of height $i$ Breuil--Kisin modules over $\s$.
%and one can not in general recover the $\Z_p$-representation from the Breuil module
Furthermore, to get $p$-integral comparison isomorphisms, we expect there must always some torsion freeness assumptions;
see, for example, \cite[\S2]{SBM}.
On the other hand, it may be possible to weaken the assumption that $H^j_{\cris}(\X_k/W(k))$ is torsion-free for $j=i, i+1$. For example, the assumption that $H^{i+1}_{\cris}(\X_k/W(k))$ is $p$-torsion free can be replaced by
assuming instead that $H^{i+1}(\RG_{A_{\inf}}(\scrX))$ is $u$-torsion free; see Proposition \ref{prop-isotiliota}.

The proof of Theorem \ref{MainIntro} has two major---and fairly independent---ingredients,
one of which might be described as purely cohomological, and the other as purely (semi)linear algebraic.
Fix a nonnegative integer $r < p-1$, and let $\Mod_S^{\varphi,r}$ be the category of
{\em height-$r$ quasi-Breuil modules over $S$}, whose objects are triples $(\scrM,\Fil^r\scrM,\varphi_{\scrM,r})$
where $\scrM$ is a finite, free $S$-module, $\Fil^r\scrM\subseteq \scrM$ is a submodule containing
$(\Fil^rS)\scrM$ with the property that $\scrM/\Fil^r\scrM$ is $p$-torsion free,
and $\varphi_{\scrM,r}:\Fil^r\scrM\rightarrow\scrM$ is a $\varphi$-semilinear map
whose image generates $\scrM$ as an $S$-module.  Morphisms are filtration and $\varphi$-comaptible
$S$-module homomorphisms.  For each $j\in \Z$, we then define $S$-submodules $\Fil^j\scrM:=\{m\in \scrM\ :\ E^{r-j}m\in \Fil^r\scrM\}$ for $j\le r$ and we put $\Fil^j\scrM=0$ for $j>r$.
We similarly define the category $\Mod_{\s}^{\varphi,r}$ of {\em height-$r$ filtered Breuil--Kisin modules},
whose objects are triples $(M,\Fil^rM,\varphi_{M,r})$ where $M$ is a finite and free $\s$-module,
$\Fil^rM\subseteq M$ is a submodule containing $E^rM$ with $M/\Fil^rM$ having no $p$-torsion,
and $\varphi_{M,r}:\Fil^r M\rightarrow M$ is a $\varphi$-semilinear map whose image
generates $M$ as an $\s$-module, and we define $\Fil^j M:=\{m\in M\ :\ E^{r-j}m\in \Fil^rM\}$
 for $j\le r$, with $\Fil^j M=0$ when $j >r$.
It is well-known that $\Mod_{\s}^{\varphi,r}$ is equivalent to the ``usual" category
of Breuil--Kisin modules over $\s$; see Remark \ref{ClassicalKisin}.
Scalar extension along $\s\rightarrow S$ induces a covariant functor
$\u{\scrM}:\Mod_{\s}^{\varphi,r}\rightarrow \Mod_{S}^{\varphi,r}$ which is known
to be an equivalence of categories \cite[Theorem 2.2.1]{CarusoLiu}.
Our main ``(semi)linear-algebraic" result is that the functor
$\u{M}:\Mod_S^{\varphi,r}\rightarrow \Mod_{\s}^{\varphi,r}$
defined by $\u{M}(\scrM):=\varprojlim_{\varphi,n} \Fil^0(\scrM\otimes_S S_n[z_n^{-1}])$
is a quasi-inverse to $\u{\scrM}$.  This we establish using a structural result (Lemma \ref{specialbasis})
that provides an explicit description of a Breuil module via bases and matrices,
together with a sequence of somewhat delicate Lemmas that rely on the
fine properties of the rings $S_n$ and their endomorphisms $\varphi$.

On the other hand, if $\X$ is a smooth and proper formal $\O_K$-scheme and $i\le r < p-1$, then
the crystalline cohomology $\scrM:=H^i_{\cris}(\X_0/S)$
can be naturally promoted to an object of $\Mod_S^{\varphi,i}$.
Using the results of Bhatt, Morrow, and Scholze \cite{SBM},
when $H^j_{\cris}(\X_k/W(k))$ is torsion free for $j=i, i+1$,
we prove in \S\ref{pdivGone} that one has a canonical comparison isomorphism
$$\Hom_{S,\Fil,\varphi}(\scrM,A_{\cris})\simeq H^i_{\et}(X_{\o{K}},\Z_p),$$
from which we deduce that $\u{M}(\scrM)$ may be identified
with the (filtered) Breuil--Kisin module $M^i$ attached to
the $\Z_p$-linear dual of $H^i_{\et}(X_{\o{K}},\Z_p)/\tors$.
Theorem \ref{MainIntro} then follows.

\section{Ring-theoretic constructions}

We keep the notation of \S\ref{Intro}. Note that, by the very definition,
the ring $S_n$ is topologically generated as an $\s_n$-algebra
by the divided powers $\{E^i/i!\}_{i\ge 1}$.  It follows that
$\Fil^i S_n$ is the closure of the expanded ideal  $(\Fil^i S) S_n$ in $S_n$.
We write $c_0:=\varphi(E)/p$, which is a unit of $S=S_0$.
Since $\varphi(g)\equiv g^p\bmod p$, one shows that $c_0 = v + E^p/p$ for a unit $v\in \s$.
Observe that
$$\left(\frac{E^p}{p}\right)^n = \frac{(pn)!}{p^n}\left(\frac{E^{pn}}{(pn)!}\right),\ \text{and}\
v_p((pn)!/p^n) = v_p(n!)
$$
by Legendre's formula, so that the ring
$T_n:=\s_{n}[\![E^p/p]\!]$ is naturally a subring of $S_n$ that contains $c_0$
and is stable under $\varphi$ as $\varphi(E)=pc_0$.  There are obvious inclusions $T_n\hookrightarrow T_{n+1}$
that are compatible with the given inclusions $\s_n\hookrightarrow \s_{n+1}$
and $S_n\hookrightarrow S_{n+1}$.
By definition, the injective map $\varphi:\s_n\rightarrow \s_n$
has image precisely $\s_{n-1}$ inside $\s_n$.  While the na\"ive analogue of this fact
for the rings $S_n$ is certainly {\em false},
Frobenius is nonetheless a ``contraction" on $S_n$ in the following precise sense:

\begin{lemma}\label{KeyA}
	Let $i$ be a nonnegative integer and set $b(i):=\left\lceil i\frac{p-2}{p-1} \right\rceil$.
	We have $\varphi(S_n) \subseteq T_{n-1}$ inside $S_n$; in particular, $\varphi:S_n\rightarrow S_n$
	has image contained in $S_{n-1}$.  Moreover,
	if $x\in \Fil^i S_n$ then $\varphi(x)=w+y$ for some $w\in \s_{n-1}$ and $y\in \Fil^{pb(i)} S_{n-1}$.
\end{lemma}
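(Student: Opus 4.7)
The plan is to reduce both assertions to an explicit formula for $\varphi$ applied to a divided power $E^{[i]}$, and then extend to general elements of $S_n$ by $p$-adic continuity. Using that $S_n$ is $p$-torsion free and $\varphi(E) = pc_0$, one computes
\[
\varphi(E^{[i]}) \;=\; \varphi(E)^i/i! \;=\; \alpha_i\, c_0^i, \qquad \alpha_i := p^i/i! \in \Z_p,
\]
where $\alpha_i$ lies in $\Z_p$ because Legendre's formula yields $v_p(i!) \le i/(p-1) \le i$. For the first assertion, the decomposition $c_0 = v + E^p/p$ with $v \in \s^{\times}$ exhibits $c_0 \in T_{n-1}$, so the identity above gives $\varphi(E^{[i]}) \in T_{n-1}$. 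Combined with $\varphi(\s_n) = \s_{n-1} \subseteq T_{n-1}$ and the fact that $S_n$ is topologically generated over $\s_n$ by the $E^{[i]}$, the inclusion $\varphi(S_n) \subseteq T_{n-1}$ follows from $p$-adic continuity of $\varphi$ and closedness of $T_{n-1}$ in $S_{n-1}$.

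For the refined filtration statement, I would expand the binomial to obtain
\[
\varphi(E^{[i]}) \;=\; \sum_{k=0}^{i} T_k, \qquad T_k := \frac{p^{i-k}}{k!(i-k)!}\, v^{i-k}\, E^{pk},
\]
and split the sum at the threshold $k = b(i)$. When $k < b(i)$, the inequality $(p-1)k \le (p-2)i$ combined with the elementary bound $v_p(m!) \le m/(p-1)$ forces the rational scalar $p^{i-k}/(k!(i-k)!)$ to lie in $\Z_p$, so $T_k \in \s_{n-1}$ (using $v, E \in \s_{n-1}$). When $k \ge b(i)$, rewriting $E^{pk} = (pk)!\,E^{[pk]}$ yields $T_k = c_k\, v^{i-k}\, E^{[pk]}$ with scalar $c_k \in \Z_p$ (the key computation being $v_p((pk)!/k!) = k$), and $E^{[pk]} \in \Fil^{pk}S_{n-1} \subseteq \Fil^{pb(i)}S_{n-1}$ since $pk \ge pb(i)$. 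Setting $w_i := \sum_{k<b(i)} T_k$ and $y_i := \sum_{k\ge b(i)} T_k$ then yields the desired decomposition for $x = E^{[i]}$.

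For a general $x \in \Fil^i S_n$, expressed as a $p$-adically convergent sum $\sum_{j\ge i} a_j E^{[j]}$ with $a_j \in \s_n$, the monotonicity $b(j) \ge b(i)$ for $j \ge i$ ensures that each $y_j \in \Fil^{pb(j)} \subseteq \Fil^{pb(i)}$. Thus
\[
\varphi(x) \;=\; \sum_j \varphi(a_j)\, w_j \;+\; \sum_j \varphi(a_j)\, y_j
\]
gives the decomposition $w + y$ with $w \in \s_{n-1}$ and $y \in \Fil^{pb(i)}S_{n-1}$, the convergence being automatic from continuity of $\varphi$ and closedness of $\s_{n-1}$ and $\Fil^{pb(i)}S_{n-1}$ in $S_{n-1}$.

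The main difficulty is the dichotomous combinatorial analysis of the scalar coefficients $p^{i-k}/(k!(i-k)!)$: verifying that its $p$-adic valuation is nonnegative exactly when needed, so that the low-$k$ terms collapse into $\s_{n-1}$ while the high-$k$ terms acquire the required filtration level $pb(i)$. This balance is orchestrated precisely by the definition of $b(i)$ together with Legendre's formula, and is the essential content of the lemma.
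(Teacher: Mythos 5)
Your proof is correct and takes essentially the same route as the paper's: reduce to the divided powers $E^{j}/j!$, expand $\varphi(E)^j=(E^p+pv)^j$ binomially, and split the sum into terms with $p$-integral coefficients (absorbed into $\s_{n-1}$) and terms whose power of $E$ is at least $pb(j)$ (absorbed into $\Fil^{pb(j)}S_{n-1}\subseteq\Fil^{pb(i)}S_{n-1}$). The only cosmetic differences are that you place the cut exactly at $k=b(i)$ and check integrality via the bound $v_p(m!)\le m/(p-1)$, whereas the paper cuts at the valuation threshold and uses the exact digit-sum form of Legendre's formula; both verifications are valid.
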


\begin{proof}
	Since $\Fil^i S_n$ is topologically generated as an $\s_n$-module by $\{E^j/j!\}_{j\ge i}$
	and $\varphi(\s_n)=\s_{n-1}$, to prove the first assertion it is enough to show that $\varphi(E^j/j!)$
	lies in $T_{n-1}$ for all $j$.  But this is clear, as $\varphi(E^j/j!) = c_0^j p^j/j!$
	and $p^j/j!\in \Z_p$ for all $j$.  To prove the second assertion, it likewise suffices to
	treat only the cases $x=E^j/j!$ for $j\ge i$.
	As observed above, $\varphi(E)=E^p + pv$ for $v\in \s_0^{\times}$, so we compute
	\begin{equation}
		\varphi(E^j/j!) = \frac{(E^p + pv)^j}{j!} = \sum_{k=0}^j \frac{p^k}{(j-k)!k!} E^{p(j-k)} v^k.
		\label{binomexp}
	\end{equation}
	Writing $s_p(n)$ for the sum of the p-adic digits of any nonnegative integer $n$
	and again invoking Legendre's formula gives
	$$v_p\left(\frac{p^k}{(j-k)!k!}\right) = k - \frac{j}{p-1} + \frac{s_p(j-k)+s_p(k)}{p-1},$$
	which is nonnegative for $k \ge j/(p-1)$.  On the other hand, if $k < j/(p-1)$
	then one has the inequality $(j-k) \ge \left\lceil j \frac{p-2}{p-1}\right\rceil = b(j)$.
	Combining these observations with (\ref{binomexp}) then gives the desired
	decomposition $\varphi(E^j/j!) = w+y$ with $w\in \s_0$ the sum of all terms
	in (\ref{binomexp}) with $k \ge j/(p-1)$ and $y\in \Fil^{pb(j)} S_{n-1}$
	the sum of the remaining terms.
\end{proof}

We now define
$$
\varprojlim_{\varphi,n} S_n :=
	 \left\{\{s_n\}_{n\ge 0} \ :\ s_n \in S_n\ \text{and}\ \varphi(s_{n+1})=s_{n},\ \text{for all}\ n\ge 0\right\},
$$
which---as $\varphi$ is a ring homomorphism---has the natural structure of a ring via component-wise addition and multiplication.
The fact that $\varphi$ ``contracts" the tower of rings $\{S_n\}_{n\ge 0}$
manifests itself in the following Lemma, which inspired this paper:

\begin{lemma}\label{FrobComp2}
	For $p>2$ the natural map
	\begin{equation}
		\s \rightarrow \varprojlim_{\varphi,n} S_n,\qquad g(u)\mapsto \{g^{\sigma^{-n}}(u_n)\}_{n\ge 0}
		\label{FrakStoLimS}
	\end{equation}
	is an isomorphism of rings.
\end{lemma}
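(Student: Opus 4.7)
The map sends $g(u)=\sum_{i\ge 0}a_i u^i$ to the sequence with $n$-th entry $g^{\sigma^{-n}}(u_n) = \sum \sigma^{-n}(a_i) u_n^i \in \s_n\subset S_n$.  Well-definedness into the inverse limit is immediate from $\varphi|_{W(k)}=\sigma$ and $\varphi(u_{n+1})=u_{n+1}^p=u_n$, which give
$$\varphi\bigl(g^{\sigma^{-(n+1)}}(u_{n+1})\bigr) = \sum \sigma^{-n}(a_i) u_n^i = g^{\sigma^{-n}}(u_n).$$
The (evident) ring homomorphism so produced is injective because composition with projection onto the $n=0$ entry recovers the natural inclusion $\s=\s_0\hookrightarrow S_0=S$.

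For surjectivity, we exploit the natural topological direct sum decomposition $S_n = \s_n \oplus \Fil^1 S_n$, which arises from the description of the PD envelope as a free $\s_n$-module on the basis $\{E^i/i!\}_{i\ge 0}$ followed by $p$-adic completion.  Decompose each $s_n=a_n+r_n$ uniquely with $a_n\in\s_n$ and $r_n\in\Fil^1 S_n$. It suffices to prove $r_n=0$ for all $n$: once this is known, the relation $\varphi(a_{n+1})=a_n$ under the $\sigma$-semilinear isomorphism $\varphi:\s_{n+1}\xrightarrow{\sim}\s_n$ (sending $u_{n+1}\mapsto u_n$) forces $a_n = g^{\sigma^{-n}}(u_n)$ for $g:=a_0\in\s$, exhibiting $(s_n)$ as the image of $g$.

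The vanishing of $r_n$ is established by bootstrapping Lemma~\ref{KeyA}.  Inductively suppose $r_m\in\Fil^j S_m$ for all $m$, the case $j=1$ being the definition.  Applying Lemma~\ref{KeyA} to $r_{n+1}$ gives $\varphi(r_{n+1})=w_{n+1}+y_{n+1}$ with $w_{n+1}\in\s_n$ and $y_{n+1}\in\Fil^{pb(j)}S_n$.  Substituting into the identity $a_n+r_n = \varphi(s_{n+1}) = \varphi(a_{n+1}) + w_{n+1} + y_{n+1}$ and comparing components under the decomposition $S_n=\s_n\oplus\Fil^1 S_n$ (valid since $pb(j)\ge 1$ for $j\ge 1$ when $p\ge 3$) yields $r_n=y_{n+1}\in\Fil^{pb(j)}S_n$.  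Iterating gives $r_n\in\Fil^{j_k}S_n$ with $j_{k+1}=pb(j_k)\ge j_k\cdot p(p-2)/(p-1)$; since this growth factor strictly exceeds $1$ exactly when $p\ge 3$, we have $j_k\to\infty$.  The separation $\bigcap_j\Fil^j S_n=0$ (immediate from the uniqueness of the expansion $\sum c_i E^i/i!$ of elements of $S_n$) then forces $r_n=0$.

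The principal obstacle lies in the bootstrap itself: both the successful extraction of the $\s_n$-component of $\varphi(r_{n+1})$ (despite $\varphi$ failing to preserve the $\s_n\oplus\Fil^1 S_n$ decomposition) and the exponential growth of the filtration index depend sharply on the precise contraction estimate of Lemma~\ref{KeyA}, and they break down at $p=2$, where $b(i)=0$ gives no improvement.
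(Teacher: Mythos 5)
Your overall strategy is the paper's: bootstrap Lemma \ref{KeyA} to push the ``non-$\s_n$ part'' of $s_n$ into deeper and deeper filtration steps, using $p>2$ to see that the filtration indices grow. But the execution rests on a false claim. There is no direct sum decomposition $S_n=\s_n\oplus\Fil^1 S_n$: by Lemma \ref{Lem:intersect}(1) one has $\s_n\cap\Fil^1 S_n=E\s_n\neq 0$, so the sum is very far from direct, and the expression $s_n=a_n+r_n$ is not unique. (Relatedly, $S_n$ is not a free $\s_n$-module on $\{E^i/i!\}_{i\ge 0}$; e.g.\ $E^p\cdot 1=p!\cdot(E^p/p!)$ is a nontrivial relation. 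The correct statement is only that $S_n=\s_n+\Fil^p S_n$ as a non-direct sum.) Consequently the pivotal step of your bootstrap --- ``comparing components'' in the identity $a_n+r_n=\varphi(a_{n+1})+w_{n+1}+y_{n+1}$ to conclude $r_n=y_{n+1}$ --- is invalid; from that identity you may only conclude $r_n-y_{n+1}\in\s_n$, i.e.\ $s_n\in\s_n+\Fil^{pb(j)}S_n$.

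Once corrected, your argument collapses onto the paper's: one shows $s_0\in\s_0+\Fil^{i_k}S_0$ for a sequence $i_k\to\infty$, and must then deduce $s_0\in\s_0$. That last deduction is \emph{not} just the separatedness $\bigcap_j\Fil^j S_n=0$ applied to a fixed remainder $r_n$ (there is no canonical $r_n$); rather, writing $s_0=w_k+y_k$ with $w_k\in\s_0$ and $y_k\in\Fil^{i_k}S_0$, one uses $\s_0\cap\Fil^m S_0=E^m\s_0$ to see that $\{w_k\}$ is Cauchy for the $E$-adic topology of the complete Noetherian local ring $\s_0$, whence $w_k\to w\in\s_0$ and $s_0-w\in\bigcap_j\Fil^j S_0=0$. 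With that replacement your proof becomes correct and is essentially the one in the paper.
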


\begin{proof}
	It is clear that the given map is an injective ring homomorphism, so it suffices to prove that
	it is surjective.  Let $\{s_n\}_{n\ge 0}$ be an arbitrary element of $\varprojlim_{\varphi,n} S_n$.
	Since $S_n = \s_n+\Fil^p S_n$, an easy induction using Lemma \ref{KeyA} shows
	that $s_0=\varphi^{(n)}(s_n)$ lies in  $\s_0 + \Fil^{i_n} S_0$, where $i_n$ is defined recursively by $i_0=p$
	and $i_{n}=p b(i_{n-1})$ for $n\ge 1$.  As this holds for all $n\ge 0$ and
	$$i_{n+1}-i_n = p b(i_n) - i_n \ge pi_n\frac{p-2}{p-1} -i_n = (p-3)i_n+\frac{p-2}{p-1}i_n$$
	so that $\{i_n\}_{n\ge 0}$ is an increasing sequence (recall $p>2$), it follows
	that $s_0\in \s_0$.  But then $\{s_n\}_{n\ge 0} = \{\varphi^{-n}(s_0)\}_{n\ge 0}$
	is in the image of (\ref{FrakStoLimS}), as desired.
\end{proof}

\begin{remark}
	An equivalent formulation of Lemma \ref{FrobComp2} is stated in passing in the introduction to \cite{Bar},
	which does not ultimately use the rings $S_n$.
	We first learned about Lemma \ref{FrobComp2} from \cite{CaisDavis},
	which constructs a canonical isomorphism of rings $\s\simeq \varprojlim W(\O_{K_n})$,
	where $W(\cdot)$ is the functor of ($p$-typical) Witt vectors (see \cite[Theorem 1.4]{CaisDavis})
	and the inverse limit is taken along the Witt vector Frobenius mappings.
	As the kernel of the projection $W(\O_{K_n})\twoheadrightarrow \O_{K_n}$
	on to the 0-{\em th} Witt component
	is equipped with divided powers, the $W$-algebra map $\s\rightarrow W(\O_{K_n})$
	sending $u$ to the Teichm\"uller lifting $[\pi_n]$ extends to an inclusion $S_n\hookrightarrow W(\O_{K_n})$.
	Since $\varprojlim$ is left exact, this gives an alternate proof of Lemma \ref{FrobComp2}.
\end{remark}

For later use, we record here the following elementary result:

\begin{lemma}\label{Lem:intersect}
	Let $n$ and $m$ be any nonnegative integers.
	Then
	\begin{enumerate}
		\item $\Fil^m S_n \cap \s_n = E^m \s_n$ inside $S_n$.
		\item $(\Fil^m S_n)[1/p] \cap S_n = \Fil^m S_n$ inside $S_n[1/p]$.
	\end{enumerate}
\end{lemma}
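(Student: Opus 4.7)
The main input for both parts is an explicit description of the associated graded $\gr^j S_n := \Fil^j S_n/\Fil^{j+1} S_n$ for each $j \ge 0$. Since $E\s_n \subset \s_n$ is generated by the non-zero-divisor $E$, the general theory of PD-envelopes of a regular principal ideal (see, e.g., Berthelot--Ogus) identifies the graded ring of the PD-envelope $D_n$ of $E\s_n$ with the PD-polynomial ring $\O_{K_n}\langle \overline E \rangle$ in one variable over $\O_{K_n} = \s_n/E\s_n$, so that $\gr^j D_n = \O_{K_n} \cdot \overline{E^{[j]}}$ is a free $\O_{K_n}$-module of rank one. Each quotient $D_n/\Fil^{j+1} D_n$ is an iterated extension of copies of $\O_{K_n}$ and is therefore already $p$-adically complete, so the $p$-adic completion defining $S_n$ does not alter the filtration quotients: $\gr^j S_n = \O_{K_n} \cdot \overline{E^{[j]}}$, a free $\O_{K_n}$-module of rank one.

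For part (1), I would proceed by induction on $m$. The cases $m = 0,1$ are immediate (the map $\s_n/E\s_n \to S_n/\Fil^1 S_n$ is the canonical identification of both with $\O_{K_n}$). Given the claim for $m-1$, let $a \in \s_n \cap \Fil^m S_n$; then $a \in \Fil^{m-1} S_n \cap \s_n = E^{m-1}\s_n$ by the inductive hypothesis, so $a = E^{m-1} a'$ with $a' \in \s_n$. Using the divided power identity $E^{m-1} = (m-1)! \cdot E^{[m-1]}$, the image of $a$ in $\gr^{m-1} S_n = \O_{K_n} \cdot \overline{E^{[m-1]}}$ equals $(m-1)! \bar a' \cdot \overline{E^{[m-1]}}$, where $\bar a' \in \O_{K_n}$ denotes the class of $a'$ modulo $E$. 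Since $a \in \Fil^m S_n$ this image vanishes; as $\O_{K_n}$ is a domain and $(m-1)!$ is a nonzero integer, we conclude $\bar a' = 0$, i.e.\ $a' \in E\s_n$, and therefore $a \in E^m \s_n$ as desired.

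For part (2), the assertion is equivalent to showing that $S_n/\Fil^m S_n$ has no $p$-torsion. This follows by induction on $m$ from the short exact sequence
\[
0 \to \gr^{m-1} S_n \to S_n/\Fil^m S_n \to S_n/\Fil^{m-1} S_n \to 0,
\]
together with the facts that $\gr^{m-1} S_n \cong \O_{K_n}$ is $p$-torsion free (since $K$ has characteristic zero) and that an extension of $p$-torsion free modules is $p$-torsion free. The main technical content lies in the graded identification $\gr^j S_n \cong \O_{K_n}$; once this is in hand, both parts of the lemma reduce to routine inductions, and I do not foresee any serious obstacle.
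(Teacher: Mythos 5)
Your proof is correct and follows essentially the same route as the paper's: both arguments rest on the identification of each graded piece $\Fil^{j}S_n/\Fil^{j+1}S_n$ as a free $\O_{K_n}$-module of rank one generated by $E^j/j!$, and then run the same inductions (injectivity of $\s_n/E^m\s_n \to S_n/\Fil^m S_n$ for part (1), $p$-torsion-freeness of $S_n/\Fil^m S_n$ for part (2)). You have simply filled in the details that the paper's one-sentence proof leaves implicit.
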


\begin{proof}
	We must prove that $\s_n/E^m\s_n\rightarrow S_n/\Fil^m S_n$ is injective
	with target that is $p$-torsion free.  This is an easy induction on $m$,
	using the fact that $(E^m)/(E^{m+1})$ and $\Fil^m S_n/\Fil^{m+1} S_n$
	are free of rank one over $\s_n/(E) \simeq S_n/\Fil^1 S_n \simeq \O_{K_n}$
	with generators $E^m$ and $E^m/m!$, respectively.
\end{proof}

\section{Breuil and Breuil--Kisin modules}\label{BTconst}

We begin by recalling the relation between Breuil--Kisin modules
and Breuil modules in low Hodge--Tate weights.
Throughout, we fix an integer $r < p-1.$

\begin{definition}\label{def:BKvar}
	We write $\Mod_{\s}^{\varphi,r}$ for the category of {\em height-$r$ filtered Breuil--Kisin modules} over $\s$
	whose objects are triples $(M,\Fil^r M, \varphi_{M,r})$ where:
	\begin{itemize}
		\item $M$ is a finite free $\s$-module,
		\item $\Fil^r M$ is a submodule with $E^r M\subseteq \Fil^r M$ and $M/\Fil^r M$  $p$-torsion free.
		\item $\varphi_{M,r}:\Fil^r M\rightarrow M$ is a $\varphi$-semilinear map
		whose image generates $M$ as an $\s$-module.
	\end{itemize}
		Morphisms are $\s$-module homomorphisms which are compatible with the additional structures.
	For any object $(M,\Fil^r M, \varphi_{M,r})$ of $\Mod_{\s}^{\varphi,r}$
	and $i\le r$, set
	\begin{equation}
		\Fil^i M := \left\{ m \in M\ :\ E^{r-i}m \in \Fil^r M  \right\}\label{FilDefnM}
	\end{equation}
	and put $\Fil^i M:=0$ for $i>r$,
	and define a $\varphi$-semilinear map $\varphi_M:M\rightarrow M$
	by 
	\begin{equation}\label{phiDefonM}
		\varphi_M(m): = \varphi_{M,r}(E^r m)
	\end{equation} for $m\in M$.
	Note that for $m\in \Fil^r M$ we have $\varphi_M(m) = \varphi(E)^r\varphi_{M,r}(m)$.
\end{definition}

\begin{remark}\label{ClassicalKisin}
	Our definition of the category $\Mod_{\s}^{\varphi,r}$
	is perhaps non-standard ({\em cf.}~\cite{CarusoLiu}).
	In the literature, one usually works instead with the category of
	{\em Breuil--Kisin modules} (without filtration), whose objects are
	pairs $(\m,\varphi_{\m})$ consisting of a finite free $\s$-module
	$\m$ and a $\varphi$-semilinear map $\varphi_{\m}:\m\rightarrow \m$
	whose linearization has cokernel killed by $E^r$, with evident morphisms.
	However, the assignment $(M,\Fil^r M, \varphi_{M,r})\rightsquigarrow (\Fil^r M , E^r\varphi_{M,r})$
	induces an equivalence between our category $\Mod_{\s}^{\varphi,r}$
	and the ``usual" category of Breuil--Kisin modules $(\m,\varphi_{\m})$.
	While this is fairly standard (e.g. \cite[Lemma 8.2]{Lau:Frames}
	or \cite[Lemma 1]{VZ}), for the convenience
	of the reader and for later reference, we describe a quasi-inverse.
	
	Given $(\m,\varphi_{\m})$ as above and
	writing $\wt{\varphi}:\varphi^* \m\rightarrow \m$ for the linearization
	of $\varphi$, there is a unique (necessarily injective) $\s$-linear map
	$$\psi:\m\rightarrow \varphi^*\m\quad\text{satisfying}\quad \wt{\varphi}\circ\psi=E^r\cdot\id.$$
	The corresponding filtered Breuil--Kisin module over $\s$ is then given by
	\begin{equation}
		M:=\varphi^*\m,\ \Fil^r M:=\psi(\m)\ \text{with}\
	 	\varphi_{M,r}(\psi(m)):=1\otimes m.\label{BK-dictionary}
	 \end{equation}
	Alternatively, as one checks easily, we have the description
	\begin{equation}
		\Fil^r M = \Fil^r \varphi^*\m =\{x\in \varphi^*\m\ :\ (1\otimes\varphi)(x)\in E^r\m\}.
		\label{BKFil-Alt}
	\end{equation}
	From (\ref{BK-dictionary}) it is clear that $M$  and $\Fil^r M$ are then {\em free} $\s$-modules, so that
	$M/\Fil^r M$ has projective dimension $1$ over $\s$.  It follows
	from the Auslander--Buchsbaum formula and Rees' theorem that $M/\Fil^r M$
	has depth $1$ as an $\s$-module, so since $\s$ has maximal ideal $(u,p)=(u,E)$
	and $E$ is a zero-divisor on $M/\Fil^r M$, it must be that $\pi_0=u\bmod E$
	is {\em not} a zero divisor on $M/\Fil^r M$ and hence this quotient is $p$-torsion
	free and we really do get a filtered Breuil--Kisin module in this way.

	We have chosen to work with our category $\Mod_{\s}^{\varphi,r}$ of filtered Breuil--Kisin modules
	instead of the ``usual" category of Breuil--Kisin modules as it is our category whose objects
	are inherently ``cohomological", as we shall see.
\end{remark}

Let $S=S_0$ be as above, 
%($p$-adically completed PD-envelope
%of $\theta_0: \s_0 \twoheadrightarrow \O_K, u_0\mapsto \pi_0$), 
and for $i\ge 1$
write $\Fil^i S\subseteq S$ for the (closure of the) ideal generated by $\{E^n/n!\}_{n\ge i}$.
For $i \le r$ One has $\varphi(\Fil^i S) \subseteq p^iS$,
so we may define $\varphi_i:\Fil^i S\rightarrow S$ as $\varphi_i:=p^{-i}\varphi$.

\begin{definition}
Denote by $\Mod_S^{\varphi,r}$ the category of {\em height-$r$ $($quasi$)$ Breuil-modules}
over $S$.  These are triples $(\scrM,\Fil^r \scrM, \varphi_{\scrM,r})$
consisting of a finite free $S$-module $\scrM$ with an $S$-submodule $\Fil^r\scrM$
and a $\varphi$-semilinear map $\varphi_{\scrM,r}:\Fil^r\scrM\rightarrow \scrM$
such that:
\begin{enumerate}
	\item $(\Fil^r S)\scrM\subseteq \Fil^r\scrM$ and $\scrM/\Fil^r\scrM$
	has no $p$-torsion.
	
	\item The image of $\varphi_{\scrM,r}$ generates $\scrM$ as an $S$-module
\end{enumerate}
Morphisms are $S$-module homomorphisms that are compatible with the additional structures.
Given a quasi Breuil module $(\scrM,\Fil^r \scrM,\varphi_{\scrM,r})$ of height $r$, for
$ i\le r$ we set
\begin{equation}
	\Fil^i \scrM := \left\{ m \in \scrM\ :\ E^{r-i}m\in \Fil^r \scrM \right\}
	\label{FilDefn}
\end{equation}
and we put $\Fil^i \scrM:=0$ for $i>r$ and define
$\varphi_{\scrM}:\scrM\rightarrow \scrM$ by the recipe
$$\varphi_{\scrM}(m) := c_0^{-r}\varphi_{\scrM,r}( E^r m)\quad\text{with}\quad c_0=\varphi(E)/p\in S^{\times}.$$
Note that $\varphi_{\scrM} = p^r\varphi_{\scrM,r}$ on $\Fil^r\scrM$; it follows that $\varphi_{\scrM}$
and $\varphi_{\scrM,r}$ determine each other.
\end{definition}

There is a canonical ``base change" functor
\begin{equation}
	\u{\scrM}:\Mod_{\s}^{\varphi,r}\rightarrow \Mod_{S}^{\varphi,r}\label{BC}
\end{equation}
defined as follows: if $(M,\Fil^r M, \varphi_{M,r})$ is an object of $\Mod_{\s}^{\varphi,r}$,
then we define
$\scrM:=S\otimes_{\s} M$ and $\varphi_{\scrM}:=\varphi\otimes \varphi_M$,
with $\Fil^r \scrM$ the submodule
generated by the images of $S\otimes_{\s} \Fil^r M$ and $\Fil^r S\otimes_{\s} M$.
Then by definition, the restriction of $\varphi_{\scrM}$ to $\Fil^r\scrM$
has image contained in $\varphi^r(E) \scrM$, so it makes sense
to define $\varphi_{\scrM,r}:=p^{-r}\varphi_{\scrM}$ on $\Fil^r\scrM$.
Using the definition of the category $\Mod_{\s}^{\varphi,r}$,
it is straightforward to check that this defines a covariant
functor from $\Mod_{\s}^{\varphi,r}$ to $\Mod_S^{\varphi,r}$.

\begin{remark}
	Let $(M,\Fil^r M, \varphi_{M,r})$ be any filtered Breuil--Kisin module over $\s$
	with associated Breuil module $(\scrM, \Fil^r\scrM, \varphi_{\scrM,r})$ over $S$.
	Writing $(\m,\varphi_{\m})$ for the ``classical" Breuil--Kisin module over $\s$
	given as in Remark \ref{ClassicalKisin} and $\varphi:\s\rightarrow S$ for the composition of inclusion with Frobenius,
	one checks using (\ref{BK-dictionary}) and (\ref{BKFil-Alt}) that we have $\scrM = S \otimes_{\varphi,\s} \m$ with
	\begin{equation*}
		\Fil^r \scrM = \{m \in \scrM = S\otimes_{\varphi,\s}\m \ :\
		(1\otimes \varphi_{\m})(m)\in \Fil^r S\otimes_{\s} \m \}
	\end{equation*}
	and $\varphi_{\scrM,r}$ is the composite
	\begin{equation*}
		\xymatrix{
			{\Fil^r \scrM} \ar[r]^-{1\otimes \varphi_{\m}} & {\Fil^r S \otimes_{\s}\m }
			\ar[r]^-{\varphi_r\otimes 1} & S\otimes_{\varphi,\s} \m = \scrM
			} .
	\end{equation*}
\end{remark}

It is known that the functor $(\ref{BC})$ is an equivalence of categories.
When $r=1$, this follows from work of Kisin \cite[2.2.7, A.6]{KisinFcrystal},
albeit in an indirect way as the argument passes
through Galois representations.
Caruso and Liu \cite{CarusoLiu} give a proof of this equivalence for general $r< p-1$
by appealing to the work of Breuil and using pure (semi)linear algebra with bases and matrices.
However, no existing proof provides what one could reasonably call a {\em direct} description of a quasi-inverse functor.
We will use the ideas of section \ref{pdivGone} to provide such a description.
\begin{comment}
In any case, we have on the other hand the
crystalline Dieudonn\'e module, which provides a functor
\begin{equation}
	\pdiv(\O_K) \rightarrow \BT_S^{\varphi}\label{Dcrys}
\end{equation}
as follows.  Given a $p$-divisible group $\scrG$ over $\O_K$,
we may evaluate the Dieudonn\'e crystal of $\scrG_0:=\scrG\times_{\O_K} \O_K/p\O_K$
on the ``pro'' PD-thickening $S\twoheadrightarrow \O_K/p\O_K$ to obtain
a finite free $S$-module $\scrM:=\D(\scrG)_S$ that is equipped
with a semilinear Frobenius endomorphism coming by functoriality of the Dieudonn\'e crystal
from the Frobenius of $\scrG_0$.  On the other hand, the given lift $\scrG$ of $\scrG_0$ to $\O_K$
provides us with a Hodge filtration
\begin{equation*}
	\xymatrix{
		0 \ar[r] & {\omega_{\scrG}} \ar[r] & {\D(G_0)_{\O_K}} \ar[r] & {\Lie(\scrG^{\vee})} \ar[r] & 0
	}.
\end{equation*}
Identifying $\D(G_0)_{\O_K} \simeq \scrM \otimes_{S,\theta} \O_K$ via the compatibility of $\D$
with PD-base change, we then define $\Fil\scrM$ to be the inverse image of
$\omega_{\scrG}\subseteq \scrM\otimes_{S,\theta} \O_K$ in $\scrM$.
Using the fact that Frobenius kills differentials modulo $p$, one shows that $\varphi(\Fil M)\subseteq p\scrM$
so we may define $\varphi_1:=p^{-1}\varphi$.
Theorem \ref{BTcase} is then a consequence of the following, which is proved in \cite[Prop.~A.6]{KisinFcrystal}
using Grothendieck--Messing theory:
\begin{proposition}\label{GM}
	The crystalline Dieudonn\'e functor $(\ref{Dcrys})$ is an equivalence.
\end{proposition}
\end{comment}
Before doing so, however,
we work out an instructive example:

\begin{example}
	The (filtered) Breuil--Kisin module attached to Tate module of the $p$-divisible group
	$\mu_{p^{\infty}}$
	is the object of $\Mod_{\s}^{\varphi,1}$
	given by $M=\s\cdot \e$ on which Frobenius acts as $\varphi_M(\e)=\varphi(E)\cdot \e$,
	with $\Fil^1 M = M$ and $\varphi_{M,1}(\e)=\e$.
	The corresponding Breuil module $\scrM=S\cdot e$ is of rank 1 over $S$ with Frobenius
	acting as $\varphi_{\scrM}(e) = \varphi(E)\cdot e$ and we have $\Fil^1\scrM = \scrM$ with $\varphi_{\scrM,1}(e)=c_0 e$
	where $c_0=\varphi(E)/p\in S^{\times}$.
	Defining $\lambda:=\prod_{n\ge 0} \varphi^{n}(c_0)$,
	we have that $\lambda\in S^{\times}$ satisfies $\lambda/\varphi(\lambda) = c_0$.
	It follows that multiplication by $\lambda$ carries $\scrM$
	isomorphically onto the Breuil module given by the triple $(S,S,\varphi)$.
	
	Let $z_n\in \s_n$ be as in (\ref{Def:z_n}) and give $S_n[z_n^{-1}]$ the $\Z$-filtration by powers
	of $z_n$.  Define
	\begin{equation*}
		\u{M}(\scrM):=\{  \{\xi_n\}_{n\ge 0}\ :\ \xi_n \in \Fil^0(\scrM\otimes_{S_0} S_n[z_n^{-1}]),\ \text{and}\
		(\varphi\otimes \varphi)(\xi_n)=\xi_{n-1},\ n\ge 1\}
	\end{equation*}
	which we give the structure of an $\s=\s_0$-module by the rule
	$$g(u_0)\cdot \{\xi_n\}_{n\ge 0} := \{g^{\sigma^{-n}}(u_n)\xi_n\}_{n\ge 0},$$
	where each $\scrM\otimes_{S_0} S_n[z_n^{-1}]$ is viewed as a module over $\s_n$
	through the right factor and the canonical inclusion $\s_n\hookrightarrow S_n$.
	
	We then claim that the $\s$-linear map
	\begin{equation*}
		\iota:M=\s\cdot \e \rightarrow \u{M}(\scrM) \qquad\text{determined by}\qquad
		\iota(\e):=\{ e \otimes z_n^{-1}\}_{n\ge 0}
	\end{equation*}
	is an isomorphism.
	
	To see this, first note that the map is well defined
	as
	$$e\otimes z_n^{-1}\in \Fil^1 \scrM \otimes \Fil^{-1} S_n[z_n^{-1}]\subseteq \Fil^0(\scrM\otimes S_n[z_n^{-1}])$$
	and
	$$(\varphi\otimes\varphi)(e\otimes z_n^{-1})=\varphi(E)e \otimes (\varphi(E)z_{n-1})^{-1} = e\otimes z_{n-1}^{-1}$$
	for all $n\ge 1$ (recall that $z_0=1$).
	It is clear from the very construction that $\iota$ is injective.
	To see surjectivity, we just observe that every element of $\xi_n\in \Fil^0(\scrM\otimes_{S_0} S_n[z_n^{-1}])$
	may be written as a simple tensor $\xi_n=e\otimes s_n/z_n$ with $s_n\in S_n$.
	The condition that the $\xi_n$ form a $\varphi$-compatible sequence is
	then simply that $\varphi(s_n)=s_{n-1}$, {\em i.e.}~that $\{s_n\}_{n\ge 0}$
	lies in the projective limit $\varprojlim_{\varphi,n} S_n$, which is exactly the image of
	$\s_0$ under the natural map thanks to Lemma \ref{FrobComp2}.
	It follows immediately from this that $\{\xi_n\}_{n\ge 0}$ lies in the image of $\iota$,
	as desired.
\end{example}

\begin{remark}
	The intrepid reader may wish to work out the analogue of this example for
	the Tate module of the $p$-divisible group $\Q_p/\Z_p$, whose associated filtered Breuil--Kisin module
	is given by $M=\s\cdot \e$ with $\Fil^1 M = EM$ and $\varphi_{M,1}(E\cdot \e) = \e$.
	The corresponding Breuil module is
	given by the triple $(S,\Fil^1 S, \varphi_1)$.
	As it turns out, this computation is significantly more involved,
	and requires Lemma \ref{KeyC} (for $d=1$) to carry out successfully.
\end{remark}

With this motivating example, we may now formulate our main result,
which is an {\em explicit} description of a quasi-inverse to (\ref{BC}).
This allows us to realize Breuil--Kisin modules with Hodge--Tate weights
in $\{0,\ldots,p-2\}$ as ``Frobenius-completed cohomology
up the tower $\{K_n\}_n$."

\begin{definition}\label{Def:Mfunctor}
	For $(\scrM,\Fil^r\scrM,\varphi_{\scrM,r})$ any object of $\Mod_{S}^{\varphi,r}$,
	we define
	\begin{align*}
		\u{M}(\scrM)&:=\varprojlim_{\varphi,n} \Fil^0(\scrM \otimes_S S_n[z_n^{-1}])\\
		 &= \left\{\{\xi_n\}_{n\ge 0}\ :\ \xi_n\in \Fil^0(\scrM\otimes_S S_n[z_n^{-1}])\ \text{and}\
		 (\varphi_{\scrM}\otimes\varphi)(\xi_n)=\xi_{n-1}\ \text{for}\ n\ge 1\right\}
	\end{align*}
	with filtration
	\begin{align*}
	\Fil^i\u{M}(\scrM):=
		  \left\{\{\xi_n\}_{n\ge 0}\in \u{M}(\scrM)\ :\ \xi_0\in \Fil^i(\scrM)\otimes_S S_0
		 \right\}.
	\end{align*}
	We equip $\u{M}(\scrM)$ with the Frobenius $\varphi_{\u{M}}$ given by
	$$\varphi_{\u{M}}(\{\xi_n\}_{n\ge 0}):=\{(\varphi_{\scrM}\otimes \varphi)(\xi_n)\}_{n\ge 0}.$$
	and give $\u{M}(\scrM)$ the structure of an $\s$-module via
	$$g\cdot \{\xi_n\}_{n\ge 0}:=\{ g^{\sigma^{-n}}(u_n)\xi_n\}_{n\ge 0}\quad\text{for}\quad g\in \s=\s_0.$$
	It is straightforward to check that $\varphi_{\u{M}}$ is a $\varphi$-semilinear map on $\u{M}(\scrM)$.
\end{definition}

We will see in Corollary \ref{IsAFun}
that the functor $\u{M}$ so defined takes values in
$\Mod_{\s}^{\varphi,r}$, so in particular
the restriction of $\varphi_{\u{M}}$ to $\Fil^r \u{M}(\scrM)$
is divisible by $\varphi(E)^r$ and
$\varphi_{\u{M},r}:=\varphi(E)^{-r}\varphi_{\u{M}}$ makes sense on
$\Fil^r \u{M}(\scrM)$.

\begin{theorem}\label{MainThm}
	The construction $\scrM\rightsquigarrow \u{M}(\scrM)$
	defines a covariant functor
	$$\u{M}:\Mod_S^{\varphi,r}\rightarrow \Mod_{\s}^{\varphi,r}$$
	that is moreover a quasi-inverse to the functor $\u{\scrM}$
	of $(\ref{BC})$.
\end{theorem}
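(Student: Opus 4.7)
The plan is to leverage the Caruso--Liu equivalence \cite{CarusoLiu}: since the scalar-extension functor $\u{\scrM}$ is already known to be an equivalence of categories, it suffices to show that $\u{M}$ takes values in $\Mod_\s^{\varphi,r}$ and that the composition $\u{M}\circ \u{\scrM}$ is naturally isomorphic to the identity on $\Mod_\s^{\varphi,r}$. I would carry this out in two interleaved stages: first, an explicit computation of $\u{M}(\scrM)$ as an abstract $\s$-module via a ``normal form'' basis, and second, the construction of a natural map $\eta_M : M \to \u{M}(\u{\scrM}(M))$ that the same computation forces to be an isomorphism.

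For the computation, I would invoke the structural result Lemma \ref{specialbasis} to choose an $S$-basis $e_1,\ldots,e_d$ of $\scrM$ adapted to $\Fil^r \scrM$: there exist integers $0\le \alpha_i\le r$ and generators of $\Fil^r\scrM$ of the form $E^{r-\alpha_i}e_i + (\text{higher-filtration corrections})$, relative to which $\varphi_{\scrM,r}$ has a prescribed matrix. A typical element $\{\xi_n\}\in \u{M}(\scrM)$ is then encoded by coefficient tuples $(a_{n,i})\in (S_n[z_n^{-1}])^d$ with $\xi_n = \sum_i a_{n,i}(1\otimes e_i)$, and the two defining conditions --- the filtration $\xi_n \in \Fil^0$ and the Frobenius compatibility $(\varphi_{\scrM}\otimes \varphi)(\xi_n) = \xi_{n-1}$ --- translate to explicit constraints on the $a_{n,i}$. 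The crucial mechanism is that the identity $\varphi(z_n) = \varphi(E)z_{n-1}$ exactly cancels the $\varphi(E)$-factors arising from $\varphi_{\scrM}$ on $\Fil^r\scrM$; after clearing $z_n$-denominators, the sequences $(a_{n,i})_n$ reduce to elements of $\varprojlim_{\varphi,n}S_n$, which by Lemma \ref{FrobComp2} is precisely $\s$. The Frobenius-contraction property captured by Lemma \ref{KeyA} is indispensable here: without it, the projective limit would be strictly larger than $\s$.

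This computation realises $\u{M}(\scrM)$ as a free $\s$-module of rank $d=\rk_S\scrM$ equipped with an induced filtration and semilinear Frobenius, and since the normal form has been tracked throughout, the axioms of $\Mod_\s^{\varphi,r}$ (height $r$, image of $\varphi_{\u{M},r}$ generating, $p$-torsion-free quotient) can be checked directly on the chosen generators; this yields the statement of Corollary \ref{IsAFun}. To then build $\eta_M$, I would apply Lemma \ref{specialbasis} on the $\s$-side as well (or transport the adapted basis through $\u{\scrM}$) and send each basis vector of $M$ to the explicit sequence $\{\xi_n\}$ in $S_n[z_n^{-1}]$-coordinates dictated by the same matrix data. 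Naturality is immediate from the formula, and the bijectivity of $\eta_M$ follows because both source and target have been identified with the same free rank-$d$ $\s$-module via compatible bases.

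The hard part will be the Frobenius-compatibility analysis in the second paragraph, that is, the simultaneous control of the negative $z_n$-powers contributed by the filtration, the $\varphi(E)$-factors contributed by $\varphi_{\scrM}$, and the subtle contraction of $\varphi$ on the tower $\{S_n\}$ described by Lemma \ref{KeyA}. This is exactly where the paper's ``somewhat delicate'' sequence of lemmas (refining Lemma \ref{KeyA} and Lemma \ref{Lem:intersect}) will be doing the real work, and it is there that the hypothesis $r < p-1$ is genuinely used, via the estimate on $b(i)$ that guarantees $\{i_n\}$ is strictly increasing in the proof of Lemma \ref{FrobComp2}.
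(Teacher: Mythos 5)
Your high-level reduction is logically valid but departs from the paper's route in a way worth flagging. The paper does \emph{not} invoke the Caruso--Liu equivalence anywhere in the proof of Theorem \ref{MainThm}: it proves \emph{both} natural isomorphisms $\u{M}\circ\u{\scrM}\simeq\id$ (Lemma \ref{Isom}) and $\u{\scrM}\circ\u{M}\simeq\id$ (the lemma on the map (\ref{Smap})) directly, precisely so that the argument is independent of the full-faithfulness machinery of Kisin, Breuil, and the d\'evissage underlying \cite[Theorem 2.2.1]{CarusoLiu} --- this independence is the stated point of the remark following Lemma \ref{specialbasis}. Your shortcut (cite the equivalence, then check $\u{M}\circ\u{\scrM}\simeq\id$ and well-definedness) does prove the theorem as literally stated, but it buys a shorter proof at the cost of the self-containedness the paper is after; the second composite $\u{\scrM}\circ\u{M}\simeq\id$ is in any case not hard once Corollary \ref{IsAFun} is in hand (one writes down explicit $\varphi$-compatible sequences $\xi_{i,n}=z_n^{-r}(e_iA)\varphi^{-1}(A)\cdots\varphi^{1-n}(A)$ hitting the basis vectors).

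The genuine gap is in your second paragraph. After clearing $z_n$-denominators via Lemma \ref{alpharep}, the Frobenius-compatibility condition on the coefficient vectors is \emph{not} the untwisted relation $\varphi(a_{n+1})=a_n$; the factor $\varphi(E)^r$ coming from $\varphi_{\scrM}$ on $\Fil^r\scrM$ cancels against $\varphi(z_{n+1}^{-r})=\varphi(E)^{-r}z_n^{-r}$ only after the matrix $\varphi(B)$ has been inserted, and what survives is the twisted recursion $\varphi(Ax_{n+1}+y_{n+1})=\varphi(A)(Ax_n+y_n)$ with $x_n\in S_n^d$ and correction terms $y_n\in\Fil^pS_n^d$. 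So the sequences do \emph{not} reduce to elements of $\varprojlim_{\varphi,n}S_n$, and Lemma \ref{FrobComp2} does not apply; that lemma only handles the case $A=I_d$ (the $\mu_{p^\infty}$ example), and the remark after that example warns that already the rank-one case $\Fil^1M=EM$ requires the full strength of Lemma \ref{KeyC}. The actual argument needs: (i) Lemma \ref{KeyB}, where one multiplies by $B$ and uses $BA=E^r$ together with Lemma \ref{Lem:intersect} to divide by $E^r$ and run the contraction estimate $j_{n+1}=pb(j_n)-r>j_n$ (this, not Lemma \ref{FrobComp2}, is where $r<p-1$ enters); and (ii) Lemma \ref{KeyC}, which disposes of the $\Fil^p$ corrections by first showing $y_n$ lies in $T_n=\s_n[\![E^p/p]\!]$, reducing to Lemma \ref{KeyB} after multiplying by $p$, and then recovering divisibility by $p$ via a separate mod-$p$ argument using the $u_n$-adic valuation of $z_n^r$ in $k[u_n]/(u_n^{ep^{n+1}})$. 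You correctly flag that ``the hard part'' lives here, but the mechanism you describe for it is the wrong one, and the lemmas needed are not mere refinements of Lemmas \ref{KeyA} and \ref{Lem:intersect}. A secondary inaccuracy: Lemma \ref{specialbasis} does not produce a diagonal normal form $E^{r-\alpha_i}e_i+(\text{corrections})$; the $\alpha_i$ there are elements $(\alpha_1,\ldots,\alpha_d)=(e_1,\ldots,e_d)A$ for a general matrix $A$ with $AB=BA=E^r$, and the argument must be carried out with this general $A$.
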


We will establish Theorem \ref{MainThm} through a sequence of lemmas.
We begin with a structural result for Breuil modules
which shows, in particular, that the functor (\ref{BC})
is essentially surjective:

\begin{lemma}\label{specialbasis}
	Let $\scrM \in \Mod_{S}^{\varphi,r}$.  There is an $S$-basis $e_1,\ldots, e_d$
	of $\scrM$ and matrices $A, B\in M_d(\s)$ such that:
	\begin{enumerate}
		\item If $(\alpha_1,\ldots,\alpha_d):=(e_1,\ldots, e_d)A$ then
		$$\Fil^r \scrM = \bigoplus_{i=1}^d S\alpha_i+ \Fil^p \scrM.$$\label{Areq}
		\item $c_0^{-r}\varphi_{\scrM,r}(\alpha_i)=e_i$ for $1\le i\le d$
		\item $(e_1,\ldots,e_d)\cdot E^r = (\alpha_1,\ldots,\alpha_d) B$\label{Breq}
		\item $\varphi_{\scrM}(e_1,\ldots,e_d) = (e_1,\ldots,e_d)\varphi(B)$ for $1\le i\le d$
		\item $AB=BA=E^r$.
	\end{enumerate}
	In particular, the $\s$-module $M:=\bigoplus_{i=1}^d \s e_i$
	with $\Fil^r M:=\bigoplus_{i=1}^d \s \alpha_i$ and $\varphi_{M,r}$
	determined by
	$\varphi_{M,r}(\alpha_i):=e_i$ is an object of $\Mod_{\s}^{\varphi,r}$
	whose image under $(\ref{BC})$ is $\scrM$.
\end{lemma}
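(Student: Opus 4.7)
My plan is to construct, in stages, the basis $\{e_i\}$ of $\scrM$, the elements $\alpha_i \in \Fil^r\scrM$, and the matrices $A, B \in M_d(\s)$, and then to verify the five enumerated conditions along with the ``in particular'' claim.

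First, I would choose an initial system $\tilde\alpha_1, \ldots, \tilde\alpha_d \in \Fil^r\scrM$ such that $e_i := c_0^{-r}\varphi_{\scrM,r}(\tilde\alpha_i)$ forms an $S$-basis of $\scrM$. This is possible by Nakayama's lemma applied to the local ring $S$ (with maximal ideal $(p,u)$ and residue field $k$), using the hypothesis that $\varphi_{\scrM,r}(\Fil^r\scrM)$ generates $\scrM$. I would then promote the $\tilde\alpha_i$ to elements $\alpha_i$ lying in the $\s$-span of the $e_j$'s, while preserving condition (2). The key tool is the topological decomposition $S = \s + \Fil^p S$, valid because $E^n/n! \in \s$ for $n < p$ (as $1/n!$ is a $p$-adic unit) and $E^n/n! \in \Fil^p S$ for $n \ge p$. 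Writing $\tilde\alpha_i = \sum \tilde A_{ji} e_j$ with $\tilde A \in M_d(S)$, decomposing $\tilde A = A + A'$ with $A \in M_d(\s)$ and $A' \in M_d(\Fil^p S)$, and setting $\alpha_i := \sum A_{ji} e_j$ produces approximations; iterating this correction yields, in the $p$-adic limit, the basis $\{e_i\}$ and elements $\alpha_i \in \bigoplus \s e_j \cap \Fil^r\scrM$ with $c_0^{-r}\varphi_{\scrM,r}(\alpha_i) = e_i$. Convergence uses that $\varphi(\Fil^p S) \subseteq p^p S$, so that $\varphi_{\scrM,r}$ on $(\Fil^p S)\scrM$ lands in $p^{p-r}\scrM \subseteq p^2\scrM$, which requires $r < p-1$.

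Next, I would define $B := E^r A^{-1}$ formally in $M_d(\Frac\s)$. Establishing $B \in M_d(\s)$ — equivalent to the inclusion $E^r M \subseteq \bigoplus \s \alpha_j$ where $M := \bigoplus \s e_i$ — is the main technical point. The argument uses the hypothesis that $\scrM/\Fil^r\scrM$ is $p$-torsion free, combined with $E^r e_i \in (\Fil^r S)\scrM \subseteq \Fil^r\scrM$ and the decomposition $\Fil^r S = E^r\s + \Fil^p S$ (coming from the same $p$-adic analysis of $E^n/n!$), to force $E^r e_i$ into the $\s$-span of the $\alpha_j$. Once $B$ is in hand, condition (5) ($AB = BA = E^r$) is immediate, and conditions (3) and (4) follow: $(e_1, \ldots, e_d) E^r = (\alpha_1, \ldots, \alpha_d) B$ is the matrix form of $E^r e_i = \sum B_{ji}\alpha_j$, and applying $\varphi_{\scrM,r}$ to both sides yields $c_0^r \varphi_\scrM(e_i) = c_0^r \sum \varphi(B_{ji}) e_j$. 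Condition (1) — interpreted as $\Fil^r\scrM = \bigoplus S\alpha_j + (\Fil^p S)\scrM$ — then follows from (3), which gives $E^r\scrM \subseteq \bigoplus S\alpha_j$, combined with $\Fil^r S = E^r\s + \Fil^p S$.

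The main obstacle throughout is the $p$-adic iteration in the first step and the verification that $B \in M_d(\s)$ in the second; both rely on $r < p-1$ (ensuring $p-r \ge 2$) and on the $p$-torsion-freeness of $\scrM/\Fil^r\scrM$. Once the five conditions are in hand, the ``in particular'' claim that $(M, \Fil^r M, \varphi_{M,r}) := (\bigoplus \s e_i, \bigoplus \s \alpha_i, \alpha_i \mapsto e_i)$ is an object of $\Mod_\s^{\varphi,r}$ whose image under $(\ref{BC})$ is $\scrM$ follows directly from the matrix identities in (3)--(5): the $p$-torsion-freeness of $M/\Fil^r M$ is obtained via the Auslander--Buchsbaum argument of Remark \ref{ClassicalKisin}, and the identification of the image with $\scrM$ comes from condition (1).
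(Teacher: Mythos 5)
There is a genuine gap. Note first that the paper's own ``proof'' is simply the citation to \cite[Lemma 2.2.2]{CarusoLiu}; as the remark immediately following the lemma explains, the linear-algebraic engine of that proof is (the easy part of) \cite[Lemma 4.1.1]{LiuT-CofBreuil}, namely a structure theorem for $\Fil^r\scrM$ modulo $(\Fil^p S)\scrM$. That structural input is exactly what your argument is missing, and its absence makes your derivations of conditions (1) and (3) circular. You deduce (1) from (3) via $\Fil^r S = E^r\s + \Fil^p S$; but this only yields $(\Fil^r S)\scrM \subseteq \sum_j S\alpha_j + (\Fil^p S)\scrM$, and the definition of a quasi-Breuil module allows $\Fil^r\scrM$ to contain $(\Fil^r S)\scrM$ \emph{strictly}, so the hard inclusion $\Fil^r\scrM \subseteq \sum_j S\alpha_j + (\Fil^p S)\scrM$ does not follow. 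Conversely, your argument for (3) lists as ingredients only the $p$-torsion-freeness of $\scrM/\Fil^r\scrM$, the containment $E^r e_i \in (\Fil^r S)\scrM \subseteq \Fil^r\scrM$, and the decomposition $\Fil^r S = E^r\s + \Fil^p S$; none of these mentions the $\alpha_j$, so they cannot ``force'' $E^r e_i$ into $\sum_j \s\alpha_j$. What is needed for both (1) and (3) is that the $\alpha_j$ actually generate $\Fil^r\scrM$ modulo $(\Fil^p S)\scrM$ with relation matrix $A$ dividing $E^r$ in $M_d(\s)$ --- and your $\tilde\alpha_j$, chosen only so that $\varphi_{\scrM,r}(\tilde\alpha_j)$ is a basis of $\scrM$, are not known to have this property at the point where you invoke it.

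The missing idea is the elementary-divisor-type analysis of the submodule $\Fil^r\scrM/(\Fil^p S)\scrM$ of the free $\s/(E^p)$-module $\scrM/(\Fil^p S)\scrM$ (here one uses $S = \s + \Fil^p S$ and $\s\cap\Fil^p S = E^p\s$, so $S/\Fil^p S \simeq \s/(E^p)$), exploiting the $p$-torsion-freeness of $\scrM/\Fil^r\scrM$ to produce generators $\alpha_j$ and matrices $A,B\in M_d(\s)$ with $AB=BA=E^r$. The correct logical order is the reverse of yours: one first produces such adapted generators of $\Fil^r\scrM$ mod $(\Fil^p S)\scrM$, and only then deduces --- by Nakayama, since $\varphi_{\scrM,r}\bigl((\Fil^p S)\scrM\bigr)\subseteq p^{p-r}\scrM$ --- that the $c_0^{-r}\varphi_{\scrM,r}(\alpha_i)$ form an $S$-basis. (Indeed, it is true that \emph{any} system $\beta_i\in\Fil^r\scrM$ with $\varphi_{\scrM,r}(\beta_i)$ a basis generates $\Fil^r\scrM$ mod $(\Fil^p S)\scrM$, but the only proof of this I see compares the $\beta_i$ to adapted generators whose existence is the structure theorem --- so it cannot be used to avoid that theorem.) Your successive-approximation step, using $S=\s+\Fil^p S$, $\varphi(\Fil^p S)\subseteq p^p S$ and $r<p-1$ to move the coefficients of the $\alpha_i$ into $\s$ while preserving $c_0^{-r}\varphi_{\scrM,r}(\alpha_i)=e_i$, is a faithful reconstruction of the refinement step of the actual proof; but it refines a system of generators that must first be shown to exist.
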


\begin{proof}
	This is \cite[Lemma 2.2.2]{CarusoLiu}. 	
\end{proof}

\begin{remark}
	We emphasize that the proof of Lemma \ref{specialbasis} given in
	\cite{CarusoLiu}---which relies on (the easy part of) \cite[Lemma 4.1.1]{LiuT-CofBreuil}---uses only (semi)linear algebra.
	While this result establishes the essential
	surjectivity of the functor (\ref{BC}),
	the proof that this functor is an equivalence given in \cite[Theorem 2.2.1]{CarusoLiu}
	relies on (a generalization of) the full-faithfulness result \cite[1.1.11]{Kisin-Modularity},
	which uses certain auxilliary categories of torsion Breuil--Kisin and Breuil modules
	and a devissage argument to reduce to the $p$-torsion case, where the result is a consequence
	of (the proof of) \cite[3.3.2]{BreuilIntegral} using Lemma 2.1.2.1 and Proposition 2.1.2.2
	of \cite{Breuil} and the argument of \cite[Theorem 4.1.1]{Breuil-normes}.
	In contrast, by writing down an explicit quasi-inverse to (\ref{BC}),
	our proof of Theorem \ref{MainThm} uses neither devissage nor any auxilliary
	categories, and in particular does not rely on \cite{Breuil-normes}, \cite{Breuil},
	\cite{BreuilIntegral}, or \cite{Kisin-Modularity}.
\end{remark}

In what follows, given an object $\scrM$ of $\Mod_S^{\varphi,r}$,
an $S$-basis $e_1,\ldots, e_d$ of $\scrM$, and an $S$-algebra $S'$,
we will abuse notation slightly and again write $e_1,\ldots, e_d$ for the induced
$S'$-basis of $\scrM\otimes_S S'$.

\begin{lemma}\label{alpharep}
	Let $\scrM\in \Mod_{S}^{\varphi,r}$, and let $A$
	be as in Lemma $\ref{specialbasis}$.  For $n\ge 1$, any element $\xi_n$
	of $\Fil^0( \scrM\otimes_{S} S_n[z_n^{-1}])$ may be expressed
	in the form
	$$\xi_n =z_n^{-r}(e_1,\ldots,e_d)\cdot (Ax_n+y_n)$$
	with $x_n$ a $($column$)$ vector in $S_n^d$ and $y_n$ a vector
	in $(\Fil^p S_n)^d$.
\end{lemma}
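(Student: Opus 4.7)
The plan is a two-step reduction. First I will collapse the tensor product filtration $\Fil^0(\scrM \otimes_S S_n[z_n^{-1}])$ down to the single piece $z_n^{-r}\Fil^r\scrM_n$, where $\scrM_n := \scrM\otimes_S S_n$. Second, I will base-change Lemma \ref{specialbasis} to $S_n$ to describe $\Fil^r\scrM_n$ explicitly in terms of the basis $e_1,\ldots,e_d$ and the matrix $A$.

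For the first step, unwinding the tensor product filtration shows
$$\Fil^0(\scrM \otimes_S S_n[z_n^{-1}]) = \scrM_n + \sum_{i=1}^r z_n^{-i} \Fil^i \scrM_n,$$
since terms $z_n^a \Fil^{-a}\scrM_n$ with $a \ge 0$ lie in $\scrM_n$ while those with $-a > r$ vanish. The key observation is the factorization $z_n = E \cdot \tilde z_n$ with $\tilde z_n := E(u_1)E(u_2)\cdots E(u_{n-1}) \in \s_n$. For $\mu \in \Fil^i \scrM_n$ with $i \le r$, the definition of $\Fil^i$ gives $E^{r-i}\mu \in \Fil^r \scrM_n$, and multiplying by $\tilde z_n^{r-i}\in S_n$ preserves this, so $z_n^{r-i}\Fil^i\scrM_n \subseteq \Fil^r\scrM_n$ for $0\le i\le r$. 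The case $i = 0$ in particular gives $z_n^r \scrM_n \subseteq \Fil^r \scrM_n$, absorbing the $\scrM_n$-summand into $z_n^{-r}\Fil^r\scrM_n$. Since the reverse inclusion $z_n^{-r}\Fil^r\scrM_n \subseteq \Fil^0$ is evident from the definition, we obtain $\Fil^0(\scrM\otimes_S S_n[z_n^{-1}]) = z_n^{-r}\Fil^r\scrM_n$.

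For the second step, I will establish the explicit description
$$\Fil^r \scrM_n = \bigoplus_{i=1}^d S_n\alpha_i + \Fil^p S_n\cdot \scrM_n.$$
Base-changing Lemma \ref{specialbasis}(1) to $S_n$ gives $S_n\otimes_S \Fil^r\scrM = \bigoplus S_n\alpha_i + \Fil^p S\cdot \scrM_n$, and the tensor product filtration picks up the further contribution $\Fil^r S_n \cdot \scrM_n$. Using the identity $\Fil^r S_n = E^r\s_n + \Fil^p S_n$ (which follows from $j!$ being a unit in $\s_n$ for $r \le j < p$, so each generator $E^j/j!$ of $\Fil^r S_n$ either lies in $E^r\s_n$ or in $\Fil^p S_n$) combined with $E^r(e_1,\ldots,e_d) = (\alpha_1,\ldots,\alpha_d) B$ of Lemma \ref{specialbasis}(3), this extra contribution is absorbed into $\bigoplus S_n\alpha_i + \Fil^p S_n\cdot \scrM_n$. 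Writing the given $\xi_n = z_n^{-r}\mu$ with $\mu \in \Fil^r \scrM_n$, decomposing $\mu$ according to this splitting, and then applying $(\alpha_1,\ldots,\alpha_d) = (e_1,\ldots,e_d) A$ produces the required form $\xi_n = z_n^{-r}(e_1,\ldots,e_d)(Ax_n + y_n)$.

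The main point requiring care is the passage from the explicit description of $\Fil^r\scrM$ over $S$ to that of $\Fil^r\scrM_n$ over $S_n$; the rest of the argument is then routine accounting with matrices, using that we are in the Hodge--Tate range $r < p-1$ so that small factorials remain invertible.
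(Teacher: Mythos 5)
Your proposal is correct and follows essentially the same route as the paper's proof: both collapse the degree-zero piece of the tensor-product filtration to $z_n^{-r}\Fil^r$ via the factorization $z_n/E\in\s_n$ together with the defining relation $E^{r-i}\Fil^i\subseteq\Fil^r$, and then invoke the explicit description of $\Fil^r\scrM$ from Lemma~\ref{specialbasis}(1). The only difference is your extra bookkeeping absorbing the contribution $\Fil^rS_n\cdot\scrM_n$ (via $\Fil^rS_n=E^r\s_n+\Fil^pS_n$ and $E^r(e_1,\ldots,e_d)=(\alpha_1,\ldots,\alpha_d)B$), which the paper sidesteps by working directly with $\Fil^i\scrM\otimes_S\Fil^{-i}(S_n[z_n^{-1}])$; this is harmless and, if anything, handles a slightly larger submodule.
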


\begin{proof}
	Assume $n\ge 1$ and
	observe first that for $0\le i \le r$ we have the containment
	$$\Fil^i \scrM \otimes_{S} \Fil^{-i}(S_n[z_n^{-1}]) \subseteq \Fil^r \scrM \otimes_{S} \Fil^{-r}(S_n[z_n^{-1}]).$$
	Indeed, recalling that $z_n = E\varphi^{-1}(E)\cdots \varphi^{1-n}(E)$,
	we see that $(z_n/E)\in \s_n$ and compute that any simple tensor on the left side has the form
	$$m \otimes sz_n^{-i} = m \otimes sz_n^{r-i} z_n^{-r} = E^{r-i}m \otimes s (z_n/E)^{r-i} z_n^{-r}$$
	with $m\in \Fil^i\scrM$, and this
	lies in $\Fil^r \scrM \otimes \Fil^{-r}(S_n[z_n^{-1}])$
	thanks to the very definition (\ref{FilDefn}) of $\Fil^i$.
	
	On the other hand, it follows immediately from
	Lemma \ref{specialbasis} that any
	$\xi_n\in \Fil^r \scrM \otimes_{S} \Fil^{-r}(S_n[z_n^{-1}])$
	may be written in the form
	$$
	\xi_n=\left((\alpha_1,\ldots,\alpha_d)x_n+(e_1,\ldots,e_d)y_n\right) \otimes z_n^{-r}
	= z_n^{-r}(e_1,\ldots,e_d)\cdot \left(Ax_n+y_n\right)
	$$
	for vectors $x_n\in S_n^d$ and $y_n\in (\Fil^p S_n)^d$.
\end{proof}

\begin{lemma}\label{KeyB}
	Assume $p > 2$ and
	let $d$ and $r$ be positive integers with $r < p-1$. Let $A$  be a $d\times d$
	matrix with entries in $\s=\s_0$ such that there exists a $d\times d$
	matrix $B$ with entries in $\s$ satisfying $BA = E^r I_d$.
	Let $x_1$ a vector in $S_1^d$, and assume
	that for all $n\ge 2$ there is a vector $x_n\in S_n^d$ with
	\begin{equation}
		\varphi(x_{n})= Ax_{n-1}.
	\end{equation}
	Then all coordinates of $x_1$ lie in $\s_1$.
\end{lemma}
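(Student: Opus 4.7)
My plan is to show by descending induction that $x_1 \in \s_1^d + (\Fil^N S_1)^d$ for every $N \geq 0$, and then to deduce $x_1 \in \s_1^d$ using the $E$-adic completeness of $\s_1$.

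Left-multiplying the hypothesis $\varphi(x_n) = A x_{n-1}$ by $B$ and using $BA = E^r I_d$ yields the fundamental identity
\[
E^r x_{n-1} = B\varphi(x_n) \quad\text{in}\quad S_{n-1}^d.
\]
The key inductive step is: if $x_n \in \s_n^d + (\Fil^{j_n} S_n)^d$ with $j_n \geq p$, then $x_{n-1} \in \s_{n-1}^d + (\Fil^{j_{n-1}} S_{n-1})^d$ where $j_{n-1} := pb(j_n) - r$. To prove this, decompose $x_n = w_n + \eta_n$ accordingly: applying Lemma \ref{KeyA} coordinate-wise gives $\varphi(\eta_n) \in \s_{n-1}^d + (\Fil^{pb(j_n)} S_{n-1})^d$, and since $\varphi(w_n) \in \s_{n-1}^d$ and $B$ has entries in $\s_0 \subset \s_{n-1}$, we obtain $E^r x_{n-1} = u + \zeta$ with $u \in \s_{n-1}^d$ and $\zeta \in (\Fil^{pb(j_n)} S_{n-1})^d$. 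Since $E^r x_{n-1} \in (\Fil^r S_{n-1})^d$ and $pb(j_n) \geq r$, Lemma \ref{Lem:intersect}(1) forces $u = E^r v_{n-1}$ for some $v_{n-1} \in \s_{n-1}^d$, whence $E^r(x_{n-1} - v_{n-1}) \in (\Fil^{pb(j_n)} S_{n-1})^d$. Inverting $p$, the PD-filtration on $S_{n-1}[1/p]$ agrees with the $E$-adic filtration (as $E^{[i]}$ and $E^i$ differ by units in $\Z_p[1/p]$), so dividing by $E^r$ gives $x_{n-1} - v_{n-1} \in \Fil^{pb(j_n) - r} S_{n-1}[1/p]^d$, and Lemma \ref{Lem:intersect}(2) then supplies the claim.

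The base case is $j_n = p$ for all $n$ (valid since $S_n = \s_n + \Fil^p S_n$). The condition $r < p - 1$ is precisely what makes the recursion grow: an elementary estimate gives $pb(j) - j \geq j(p^2 - 3p + 1)/(p - 1) > r$ for all $j \geq p$, so the sequence $j_n, j_{n-1}, j_{n-2}, \ldots$ is strictly increasing. Iterating the induction downward from an initial index $n_0$ yields $x_1 \in \s_1^d + (\Fil^{J(n_0)} S_1)^d$ with $J(n_0) \to \infty$ as $n_0 \to \infty$, so $x_1 \in \bigcap_{N \geq 0} (\s_1^d + (\Fil^N S_1)^d)$.

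To conclude, for each $N$ pick $w^{(N)} \in \s_1^d$ with $x_1 - w^{(N)} \in (\Fil^N S_1)^d$; by Lemma \ref{Lem:intersect}(1), $w^{(N)} - w^{(N')} \in (E^{\min(N,N')} \s_1)^d$, so $\{w^{(N)}\}$ is $E$-adically Cauchy in $\s_1^d$. Since $\s_1 = W(k)[\![u_1]\!]$ is $(p, u_1)$-adically complete with $E \in (p, u_1)$, this Cauchy sequence converges in $\s_1^d$ to some $w$, and the Hausdorffness of $S_1$ (i.e.~$\bigcap_N \Fil^N S_1 = 0$) forces $w = x_1$. The main obstacle is establishing the growth of the recursion $j_{n-1} = pb(j_n) - r$, where the hypothesis $r < p - 1$ is essential; the remaining filtration bookkeeping is a routine application of the ring-theoretic lemmas from Section 2.
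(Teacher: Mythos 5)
Your proof is correct and takes essentially the same route as the paper's: the same identity $E^r x_{n-1}=B\varphi(x_n)$, the same decomposition via Lemma \ref{KeyA} and the two parts of Lemma \ref{Lem:intersect}, and the same recursion $j_{n-1}=pb(j_n)-r$ whose growth is exactly what the hypothesis $r<p-1$ buys. Your closing $E$-adic Cauchy/completeness argument merely makes explicit the paper's terse ``taking $n\to\infty$'' step.
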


\begin{proof}
	For ease of notation, if $j$ is a positive integer, we will write $\Fil^j S_n^d$ for the submodule of $S_n^d$ consisting of vectors
	all of whose components lie in $\Fil^j S_n$.  Suppose given a sequence $\{x_n\}_{n\ge 1}$ as above.
	We will prove that for any $n > 1$, if $x_n$ can be written as a sum $x_n=y_n+y'_n$ with $y_n\in \s_n^d$ and $y_n'\in \Fil^j S_n^d$,
	then $x_{n-1}$ can be written $x_{n-1}=y_{n-1}+y_{n-1}'$ where $y_{n-1}\in \s_{n-1}^d$ and
	$y_{n-1}'\in \Fil^{pb(j)-r} S_{n-1}^d$.  So assume $x_n=y_n+y_n'$ with $y_n$ and $y_n'$ as above.  Applying Frobenius coordinate-wise
	and using Lemma \ref{KeyA} and our hypotheses, we find that
	\begin{equation*}
		Ax_{n-1}=\varphi(x_n)=v_{n-1}+ v_{n-1}'
	\end{equation*}
	with $v_{n-1}\in \s_{n-1}^d$ and $v_{n-1}'\in \Fil^{pb(j)} S_{n-1}^d$.  Multiplying both sides by $B$ then gives
	\begin{equation*}
		E^r x_{n-1}=B\varphi(x_n)=Bv_{n-1}+ Bv_{n-1}' = w_{n-1} + w_{n-1}'
	\end{equation*}
	with $w_{n-1}\in \s_{n-1}^d$ and $w_{n-1}'\in \Fil^{pb(j)} S_{n-1}^d$.
	Now $r < p-1 \le pb(j)$, from which it follows
	that $w_{n-1}=E^r y_{n-1}$ for some $y_{n-1}\in \s_{n-1}^d$ thanks to Lemma \ref{Lem:intersect}.
	We may then write $w_{n-1}' = E^r y_{n-1}'$ with $y_{n-1}'\in \Fil^{pb(j)-r} S_{n-1}^d[1/p]$.
	But since $x_{n-1}=y_{n-1}+y_{n-1}'$ with $x_{n-1}$ and $y_{n-1}$ both having coordinates in $S_{n-1}$,
	we conclude again using Lemma \ref{Lem:intersect}
	that $y_{n-1}'$ has coordinates in $S_{n-1}\cap \Fil^{pb(j)-r} S_{n-1}[1/p]=\Fil^{pb(j)-r} S_{n-1}$
	as desired.
	
	To complete the proof, we observe that since $S_n = \s_n + \Fil^p S_n$, it follows from repeated applications
	of the above fact that $x_1 = y_1 + y_1'$ with $y_1\in \s_1^d$ and $y_1'$ in $\Fil^{j_n} S_1^d$,
	with $j_n$ determined recursively by $j_1=p$ and $j_n = pb(j_{n-1})-r$ for $n> 1$.
	From the definition of $b(\cdot)$ in Lemma \ref{KeyA} and our hypothesis $r < p-1$, we compute that for $n\ge 1$
	$$j_{n+1}-j_n = pb(j_n) - r -j_n \ge
	 (p-3)j_n+ (p-2)\left(\frac{j_n}{p-1}-1\right).$$
	Using the hypothesis $p>2$ and induction on $n$ with base case $j_1=p$, we deduce
	that $j_{n+1} > j_n$ for all $n > 0$, so that $\{j_n\}_{n\ge 0}$
	is an {\em increasing} sequence of positive integers.
	Taking $n\rightarrow \infty$ then gives $x_1\in \s_1^d$ as desired.
\end{proof}

\begin{lemma}\label{KeyC}
	In the situation of Lemma $\ref{KeyB}$,
	let $x_1\in S_1^d$ and $y_1\in \Fil^p S_1^d$
	and suppose that for all $n\ge 1$ there are vectors
	$x_{n+1} \in S_{n+1}^d$ and $y_{n+1}\in \Fil^p S_{n+1}^d$
	with
	\begin{equation}
		\varphi(Ax_{n+1} + y_{n+1})= \varphi(A)(Ax_{n}+y_{n}).\label{AxyRecursion}
	\end{equation}
	Then there exists a vector $w\in \s_1^d$ such that
	$$Ax_n+y_n = A\varphi^{-1}(A)\ldots \varphi^{1-n}(A)\cdot\varphi^{1-n}(w).$$
	for $n\ge 1$.  In particular, $Ax_n+y_n$ has all coordinates in $\s_n$.
\end{lemma}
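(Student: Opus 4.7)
My plan is to produce $w$ explicitly, and then force it into $\s_1^d$ via $\varphi$-compatibility and an iteration argument. Set
\[
P_n := A\varphi^{-1}(A)\cdots\varphi^{1-n}(A), \qquad Q_n := \varphi^{1-n}(B)\cdots\varphi^{-1}(B)\cdot B,
\]
both with entries in $\s_n$. Because $AB=BA=E^rI_d$ is scalar, iterating yields $P_nQ_n = Q_nP_n = z_n^rI_d$, so $P_n$ is invertible in $M_d(\s_n[z_n^{-1}])$ with inverse $z_n^{-r}Q_n$. Writing $\xi_n := Ax_n + y_n$ and $v_n := z_n^{-r}Q_n\xi_n \in S_n[z_n^{-1}]^d$, one has $\xi_n = P_nv_n$ tautologically. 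Using the identities $\varphi(z_{n+1})=\varphi(E)z_n$, $\varphi(Q_{n+1}) = Q_n\varphi(B)$, and $\varphi(B)\varphi(A)=\varphi(E)^rI_d$, the hypothesis \eqref{AxyRecursion} translates directly into the $\varphi$-compatibility $\varphi(v_{n+1}) = v_n$ for all $n\ge 1$.

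Granted the key claim $v_1 \in \s_1^d$, the isomorphism $\varphi^{1-n}:\s_1\xrightarrow{\sim}\s_n$ together with the injectivity of $\varphi:S_{n+1}\to S_n$ (it is a nonzero ring map between integral domains) force $v_n = \varphi^{1-n}(v_1) \in \s_n^d$ for all $n$. Setting $w := v_1$ then yields
\[
\xi_n = P_nv_n = A\varphi^{-1}(A)\cdots\varphi^{1-n}(A)\cdot\varphi^{1-n}(w),
\]
which is the desired formula; the ``in particular'' assertion that $\xi_n$ has coordinates in $\s_n$ is then automatic.

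To prove $v_1 \in \s_1^d$, I would multiply \eqref{AxyRecursion} by $\varphi(B)$ and use $\varphi(BA)=\varphi(E)^rI_d$ to get $\xi_n = \varphi(B\xi_{n+1})/\varphi(E)^r$; iterating this $k$ times gives
\[
\bigl(E\,\varphi(E)\cdots\varphi^k(E)\bigr)^r\cdot v_1 \;=\; B\,\varphi(B)\cdots\varphi^k(B)\cdot \varphi^k(\xi_{k+1}).
\]
Using $S_{k+1}=\s_{k+1}+\Fil^p S_{k+1}$ to decompose $\xi_{k+1}=a+b$ with $a\in\s_{k+1}^d$ and $b\in\Fil^p S_{k+1}^d$, and iterating Lemma \ref{KeyA}, one sees $\varphi^k(\xi_{k+1})\in \s_1^d + \Fil^{j_k}S_1^d$ with $j_k\to\infty$ as $k\to\infty$; the relevant numerical inequality, guaranteed by $p>2$ and $r<p-1$, is precisely the one that drives the recursion $j\mapsto pb(j)-r$ of Lemma \ref{KeyB}. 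Combining this filtration control with the $p$-adic divisibility of $\varphi$ on $\Fil^{\bullet} S$ (internal to Lemma \ref{KeyA}) and with Lemma \ref{Lem:intersect}, a descending induction in the style of Lemma \ref{KeyB} clears the denominator $\bigl(E\,\varphi(E)\cdots\varphi^k(E)\bigr)^r$ and yields $v_1\in\s_1^d$.

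The hardest part is this last denominator-clearing step, which requires balancing the growth of the filtration depth under iterated Frobenius against the accumulating $E$- and $p$-adic powers in the denominator; it works precisely because $r<p-1$---exactly the hypothesis that makes Lemma \ref{KeyB} go through.
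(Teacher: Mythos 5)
Your reduction is correct and matches the endgame of the paper's argument: setting $v_n:=z_n^{-r}Q_n(Ax_n+y_n)$, the computation $\varphi(v_{n+1})=v_n$ is right, and once one knows $v_1\in\s_1^d$ the injectivity of $\varphi$ and the isomorphisms $\varphi^{1-n}:\s_1\simeq\s_n$ do give the stated formula with $w=v_1$. The problem is that the entire content of the lemma is the claim $v_1\in\s_1^d$, and the strategy you sketch for it does not go through. The denominator you must clear, $\bigl(E\,\varphi(E)\cdots\varphi^k(E)\bigr)^r$, equals $E^r\cdot p^{rk}\cdot(\text{unit of }S)$ because $\varphi(E)=pc_0$ with $c_0\in S^{\times}$; so beyond the single $E^r$ you are dividing by an \emph{unbounded} power of $p$. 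Iterating Lemma \ref{KeyA} writes $\varphi^k(\xi_{k+1})=c_k+d_k$ with $c_k\in\s_1^d$ and $d_k\in\Fil^{j_k}S_1^d$, $j_k\to\infty$, but this decomposition carries no $p$-divisibility information about $c_k$ (and the divided powers $E^m/m!$ hide the $p$-divisibility of $d_k$); Lemma \ref{Lem:intersect} and the Lemma \ref{KeyB} induction only ever cancel powers of $E$, never powers of $p$. The ``relevant numerical inequality'' $j\mapsto pb(j)-r$ reflects division by $E^r$ once per step, which is not what your iteration does, so the cited mechanism simply does not apply.

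The paper's proof supplies two ideas that are absent from your outline and that are needed precisely to handle this. First, one shows that $y_n$ lies in $T_n=\s_n[\![E^p/p]\!]^d$ (using the recursion once, multiplying by $\varphi(B)$, and the estimate $v_p(p^{j-r}/j!)\ge 0$ for $j\ge p$, $r\le p-1$); this gives $p\,y_n=At_n$ and hence $p(Ax_n+y_n)=As_n$ with $s_n\in S_n^d$, converting the hypothesis into the recursion $\varphi(s_n)=As_{n-1}$, which is \emph{exactly} the shape of Lemma \ref{KeyB} and yields $s_1\in\s_1^d$ -- at the cost of one extra factor of $p$. Second, that single factor of $p$ is removed by a separate mod-$p$ argument: from $pC(Ax_n+y_n)=z_n^r\varphi^{1-n}(s_1)$ one reduces modulo $p$, uses the injection $k[u_n]/(u_n^{ep^{n+1}})\hookrightarrow S_n/pS_n$ and the congruence $z_n\equiv u_n^{pe(p^n-1)/(p-1)}\bmod p$ to conclude that $s_1\bmod p$ is divisible by $u_1^{ei_n}$ with $i_n=p(p^n-r(p^n-1)/(p-1))\to\infty$ (this is where $r<p-1$ enters), whence $s_1\in p\s_1^d$ and $w:=s_1/p$ works. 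Without analogues of these two steps -- a device for extracting the matrix $A$ from $y_n$ at the cost of a bounded power of $p$, and a mechanism for dividing by that power -- your denominator-clearing step cannot be completed.
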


\begin{proof}
	Let $n\ge 1$.
	Since $S_n = \s_n + \Fil^p S_n$, we may and do assume that $x_n$
	has all coordinates in $\s_n$.
	Let us write $T_n$ for the closure of the subring $\s_n[E^p/p]$ inside $S_n$.
	We first claim that $y_n$ has all coordinates in $T_n$.  To see this, observe that
	as $y_{n+1}\in \Fil^p S_{n+1}^d$ by hypothesis, we may write
	$y_{n+1} = \sum_{j\ge p} w_j E^j/j!$ with $w_j$ a vector in $\s_{n+1}^d$ for all $j$.
	Using the recursion (\ref{AxyRecursion}) to isolate $y_n$, we find
	\begin{equation*}
		\varphi(A)y_n = \varphi(A)(\varphi(x_{n+1})-Ax_n)+ \sum_{j\ge p} \varphi(w_j) c_0^j \frac{p^j}{j!}.
	\end{equation*}
	Multiplying both sides by $\varphi(B)$ and dividing by $p^r$ we find
	$$c_0^r y_n = c_0^r(\varphi(x_{n+1})-Ax_n)+ \varphi(B)\sum_{j\ge p} \varphi(w_j) c_0^j \frac{p^{j-r}}{j!},$$
	and a standard calculation shows that for $j \ge p$ and $r \le p-1$ we have $v_p(p^{j-r}/j!)\ge 0$.
	As the right side then clearly has coordinates in $T_n$, our claim follows.
	
	Now we may write $y_n = \sum_{i\ge 0} w_i (E^p/p)^i$ with $w_i\in \s_n^d$ for all $i$.
	Since $y_n$ has coordinates in $\Fil^p S_n$, we must have $w_0 = E^p v_0$ for some $v_0\in \s_n^d$
	and we compute that
	$$py_n = E^p pv_0 + E^p \sum_{i\ge 1} w_i \left(\frac{E^p}{p}\right)^{i-1}.$$
	In particular, $py_n = E^p y_n' = A(BE^{p-r}y_n')=At_n$ for some $t_n$ with coordinates in $T_n$.
	Then $p(Ax_n + y_n) = A(px_n + t_n) = A s_n$ with $s_n$ a vector with all coordinates in $T_n\subseteq S_n$.
	Multiplying (\ref{AxyRecursion}) by $p$ and replacing $p(A{x_n}+y_n)$ by $As_{n}$ gives the recurrence
	$$ \varphi(s_n) = A s_{n-1},$$
	for all $n>1$, which forces $s_1\in \s_1^d$ thanks to Lemma \ref{KeyB}.
	For $n\ge 1$ we then have
	\begin{equation}
		p(Ax_n + y_n) = As_n = A\varphi^{-1}(A)\cdots \varphi^{1-n}(A)\varphi^{1-n}(s_1).
		\label{DivByp}
	\end{equation}
	To complete the proof, it therefore suffices to prove that $s_1$ has all coordinates
	divisible by $p$ in $\s_1$.

	Multiplying \ref{DivByp} through by $C:=\varphi^{1-n}(B)\cdots \varphi^{-1}(B)B$
	gives
	\begin{equation}
		pC(Ax_n+y_n) = E^r \varphi^{-1}(E^r)\cdots \varphi^{1-n}(E^r) \varphi^{1-n}(s_1) = z_{n}^r\varphi^{1-n}(s_1).
	\end{equation}
	Since $pC(Ax_n+y_n)$ has coordinates  in $pS_n$, we certainly have that all coordinates of
	$z_{n}^r \varphi^{1-n}(s_1)$ are zero in $S_n/pS_n$.
	On the other hand, from the very definition of $S_n$, we have an injection
	$k[u_n]/(u_n^{ep^{n+1}})\hookrightarrow S_n/pS_n$, where $e$ is the $u_0$-degree of $E=E(u_0)$.
	Write $s_1=(s_{11}(u_1),\ldots,s_{1d}(u_1))$ with $s_{1j}\in \s_1$,
	so that $\varphi^{1-n}(s_1)$ has coordinates
	$s_{1j}^{\sigma^{1-n}}(u_n)\in \s_n$ for $1\le j\le d$. Since
	$$z_{n} \equiv u_0^eu_1^e\cdots u_{n-1}^e \equiv u_n^{pe\frac{p^{n}-1}{p-1}} \bmod p,$$
	it follows from the above that
	the reduction modulo $p$ of each coordinate $s_{1j}^{\sigma^{1-n}}(u_n)\in \s_n$ is divisible by
	$u_n^{e i_n}$ in $k[\![u_n]\!]$ for {\em all $n\ge 1$}, where
	$$i_n = p\left(p^{n} - r\frac{p^{n}-1}{p-1}\right).$$	
	This implies that $s_{1j}(u_1)\bmod p$ is divisible by $u_1^{ei_n}$ for {\em all $n\ge 1$} and $j$.
	Again a straightforward calculation using the hypothesis $r < p-1$
	shows that $i_n\rightarrow \infty$ as $n\rightarrow\infty$, and we conclude that $s_{1j}(u_1)\equiv 0\bmod p$
	for all $j$, whence $s_1$ has all coordinates divisible by $p$ in $\s_1$, as desired.
\end{proof}

Let $(M,\Fil M,\varphi_1)$ be an arbitrary object of $\Mod_{\s}^{\varphi,r}$
and let $\varphi_M:M\rightarrow M$ be as in (\ref{phiDefonM}).
Give the ring $\s_n[z_n^{-1}]$ the $\Z$-filtration by powers of $z_n$,
and for ease of notation, set
$$M_n:=\Fil^0(M \otimes_{\s} \s_{n}[z_{n}^{-1}]).$$
\begin{lemma}\label{Lifting}
	For $n\ge 0$ and $x\in M_n$, there exists $y\in M_{n+1}$
	with $(\varphi_M\otimes \varphi)(y)=x$.  Moreover, $y$ is unique.
\end{lemma}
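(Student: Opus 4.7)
The plan is to fix a convenient $\s$-basis of $M$ adapted to $\Fil^r M$, use it to make $M_n$ completely explicit, and then reduce the lemma to the fact from Section~\ref{Intro} that $\varphi \colon \s_{n+1}\to \s_n$ is an isomorphism. By Remark~\ref{ClassicalKisin}, $\Fil^r M$ is a free $\s$-module of rank $d=\rk_\s M$. I would pick an $\s$-basis $\alpha_1,\ldots,\alpha_d$ of $\Fil^r M$ and set $e_i:=\varphi_{M,r}(\alpha_i)$. Since $\im \varphi_{M,r}$ generates $M$, the $\s$-linear map $\s^d\to M$ sending the $i$-th standard basis vector to $e_i$ is surjective, hence an isomorphism by rank count (its kernel would be a torsion submodule of the torsion-free module $\s^d$). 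Writing $(\alpha_1,\ldots,\alpha_d)=(e_1,\ldots,e_d)A$ and $E^r(e_1,\ldots,e_d)=(\alpha_1,\ldots,\alpha_d)B$ for matrices $A,B\in M_d(\s)$ gives $AB=BA=E^r I_d$ (verified after inverting the non-zero-divisor $E$); in particular $A$ is injective on $\s_m^d$ for every $m\ge 0$. One also records $\varphi_M(\alpha_j)=\varphi(E)^r e_j$, since $\alpha_j\in \Fil^r M$.

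Next I would describe $M_n$ explicitly. For $n\ge 1$, the identity $z_n/E=E(u_1)E(u_2)\cdots E(u_{n-1})\in \s_n$ and the rewriting $m\otimes s z_n^{-a}=E^{r-a}m\otimes s(z_n/E)^{r-a}z_n^{-r}$ (for $m\in\Fil^a M$) show that $\Fil^a M\otimes z_n^{-a}\s_n\subseteq \Fil^r M\otimes z_n^{-r}\s_n$ whenever $0\le a\le r$. Combined with $\Fil^a M=0$ for $a>r$ and $\Fil^a M=M$ for $a\le 0$ (absorbed into the $a=0$ term), the tensor-product filtration yields
\[
M_n = z_n^{-r}\bigl(\Fil^r M\otimes_{\s}\s_n\bigr) = \bigl\{\,z_n^{-r}(e_1,\ldots,e_d)Av : v\in \s_n^d\,\bigr\},
\]
with $v$ uniquely determined by injectivity of $A$. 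For $n=0$ we have $z_0=1$, so $M_0=M$ with elements uniquely written as $x=(e_1,\ldots,e_d)x_0$ in the $e_j$-basis directly.

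The heart of the proof is then a one-line computation: for any $y_{n+1}\in \s_{n+1}^d$ and $y:=z_{n+1}^{-r}(e_1,\ldots,e_d)Ay_{n+1}\in M_{n+1}$,
\[
(\varphi_M\otimes \varphi)(y) = z_n^{-r}(e_1,\ldots,e_d)\,\varphi(y_{n+1}),
\]
since the factors of $\varphi(E)^r$ coming from $\varphi_M(\alpha_j)=\varphi(E)^r e_j$ and $\varphi(z_{n+1})^{-r}=(\varphi(E)z_n)^{-r}$ cancel exactly. Setting this equal to $x\in M_n$ reduces the equation $(\varphi_M\otimes\varphi)(y)=x$ to $\varphi(y_{n+1})=Ax_n$ for $n\ge 1$ (where $x=z_n^{-r}(e_1,\ldots,e_d)Ax_n$), and to $\varphi(y_1)=x_0$ for $n=0$ (where $x=(e_1,\ldots,e_d)x_0$). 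In either case the right-hand side is a definite vector in $\s_n^d$, and since $\varphi\colon \s_{n+1}\to \s_n$ is bijective we obtain a unique $y_{n+1}\in \s_{n+1}^d$, hence a unique $y\in M_{n+1}$.

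I expect no serious obstacle: the proof is an explicit unwinding in the chosen basis, and in particular does not require any of the deeper Lemmas~\ref{KeyA}, \ref{KeyB}, or \ref{KeyC} about the rings $S_n$. The only point needing a little care is the mildly different parameterization of $M_0=M$ (directly in the $e_j$-basis, with no factor of $A$), but this is handled by the same argument via bijectivity of $\varphi$ on $\s_{n+1}$.
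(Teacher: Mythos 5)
Your proof is correct and follows essentially the same route as the paper: both reduce the lemma to the observation that every element of $M_n$ can be written with coefficients in $z_n^{-r}\s_n$ against $\Fil^r M$ (equivalently, as a sum of $\varphi_{M,r}(m)\otimes(s/z_n^r)$) and then invoke the bijectivity of $\varphi:\s_{n+1}\to\s_n$. The only difference is presentational --- the paper argues basis-free, producing the preimage $m\otimes(\varphi^{-1}(s)/z_{n+1}^r)$ directly and getting uniqueness from injectivity of $\varphi_M\otimes\varphi$, whereas you carry out the identical computation in coordinates.
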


\begin{proof}
	Since the image of $\varphi_{M,r}: \Fil^r M\rightarrow M$
	generates $M$ as an $\s$-module, every element of $M_n$
	is a sum of elements of the form $\varphi_{M,r}(m) \otimes (s/z_n^r)$, for appropriate $m\in \Fil^r M$
	and $s\in \s_n$.  Consider the element $m \otimes (\varphi^{-1}(s)/z_{n+1}^r)$,
	which lies in $M_{n+1}=\Fil^0(M \otimes_{\s} \s_{n+1}[z_{n+1}^{-1}])$.  Then:
	\begin{align*}
		(\varphi_M\otimes \varphi)(m \otimes (\varphi^{-1}(s)/z_{n+1}^r)) &=
		(\varphi_M(m))\otimes (s/\varphi(z_{n+1}^r)) \\
		&=\varphi(E)^r \varphi_{M,r}(m) \otimes (s/\varphi(E)^rz_n^r)\\
		& = \varphi_{M,r}(m) \otimes (s/z_n^r)
	\end{align*}
	This proves the existence of $y$ as in the statement of the lemma.
	Uniqueness follows immediately from the fact that $\varphi_M\otimes \varphi$,
	viewed as a self-map of $M\otimes_{\s} \s_{n+1}[z_{n+1}^{-1}]$, is injective.
\end{proof}

\begin{remark}\label{FilRem}
	The Lemma shows the stronger fact that any $x\in M\otimes_{\s} \Fil^{-r}(\s_n[z_n^{-1}])$
	has a unique preimage under $\varphi_M\otimes \varphi$ in $\Fil^r M \otimes \Fil^{-r}(\s_n[z_{n+1}^{-1}])$.
\end{remark}

Now let $(\scrM,\Fil^r\scrM,\varphi_{\scrM,r}):=\u{\scrM}(M)$ be the functorially associated
object of $\Mod_{S}^{\varphi,r}$, so $\scrM = M\otimes_{\s} S$
and $\Fil^r \scrM$ is
$S$-submodule of $\scrM$
generated by the images of $M\otimes_{\s} \Fil^r S$ and $\Fil^r M \otimes_{\s} S$
under the obvious maps.  As such, we have a canonical inclusion of $\s_n$-modules:
\begin{equation*}
	\xymatrix{
		{\iota_n:M_n:=\Fil^0(M \otimes_{\s} \s_{n}[z_{n}^{-1}])}\ar[r] &{ \Fil^0(\scrM\otimes_{S} S_n[z_n^{-1}])}
		}
\end{equation*}
that is $\varphi$-compatible.
We also have an obvious {\em isomorphism} $\tau:M\rightarrow M_0$ given by $m\mapsto m\otimes 1$.
Given $m\in M$, for $n\ge 0$ we then define $\xi_n\in M_n$ to be the unique element
of $M_n$ satisfying
$$(\varphi_M\otimes\varphi)^{(n)}(\xi_n)=\tau(m);$$
this exists thanks to Lemma \ref{Lifting}.
We obtain a map:
\begin{equation}
	M \rightarrow \u{M}(\scrM) = \varprojlim_{\varphi,n} \Fil^0(\scrM\otimes_S S_n[z_n^{-1}])
	\qquad\text{given by}\qquad
	m \mapsto \{\iota_n(\xi_n)\}_{n\ge 0}.\label{MTMap}
\end{equation}

\begin{lemma}\label{Isom}
	The map $(\ref{MTMap})$ is a natural isomorphism of filtered $\varphi$-modules
	over $\s$.
\end{lemma}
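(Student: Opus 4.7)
The plan is to establish injectivity together with the structural compatibilities (naturality, $\s$-linearity, Frobenius, filtration) by direct verification from the definitions, and then concentrate the main effort on surjectivity, which is the only substantive point.

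Injectivity is immediate: the $0$-th entry of the image of $m\in M$ under (\ref{MTMap}) is $\iota_0(\xi_0) = m\otimes 1 \in \scrM = S\otimes_\s M$, so since $M$ is $\s$-free and $\s\hookrightarrow S$, we have $m\otimes 1\ne 0$ whenever $m\ne 0$. Naturality and $\s$-linearity are routine, the latter relying on the identity $\varphi(g^{\sigma^{-n}}(u_n)) = g^{\sigma^{1-n}}(u_{n-1})$. Frobenius-compatibility follows from the uniqueness in Lemma \ref{Lifting}, which implies that the element of $M_n$ associated to $\varphi_M(m)$ equals the element of $M_{n-1}$ associated to $m$ (viewed inside $M_n$), combined with the $\varphi$-equivariance of $\iota_n$. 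Filtration compatibility in both directions reduces to the identity $\Fil^r\scrM\cap M = \Fil^r M$, which follows from Lemma \ref{specialbasis} together with the observation---using Lemma \ref{Lem:intersect}(1) and the fact that $E$ is a non-zero-divisor on $S$---that $E^r s \in \s$ with $s\in S$ forces $s\in\s$.

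For surjectivity, let $\{\eta_n\}\in\u{M}(\scrM)$. It suffices to show $\eta_0\in\iota_0(M)$: then setting $m := \iota_0^{-1}(\eta_0)\in M$ and invoking the uniqueness in Lemma \ref{Lifting} together with the Frobenius-equivariance of $\iota$ yields $\eta_n = \iota_n(\xi_n)$ for all $n$. To prove $\eta_0\in M$, I would fix a basis $e_1,\ldots,e_d$ of $\scrM$ and matrices $A, B\in M_d(\s)$ with $AB = BA = E^r I_d$ as provided by Lemma \ref{specialbasis}, so that $M = \bigoplus_i \s e_i$ inside $\scrM = \bigoplus_i S e_i$. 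By Lemma \ref{alpharep}, for each $n\ge 1$ I would write
\[
\eta_n = z_n^{-r}(e_1,\ldots,e_d)(Ax_n + y_n),\qquad x_n\in S_n^d,\ y_n\in(\Fil^p S_n)^d,
\]
and use $S_n = \s_n + \Fil^p S_n$ to arrange that $x_n\in\s_n^d$. Unwinding the compatibility $(\varphi_\scrM\otimes\varphi)(\eta_{n+1}) = \eta_n$ via $\varphi_\scrM((e_1,\ldots,e_d)) = (e_1,\ldots,e_d)\varphi(B)$, $\varphi(z_{n+1}) = \varphi(E)z_n$, and $\varphi(A)\varphi(B) = \varphi(E)^r I_d$ yields, after multiplying by $\varphi(A)$ and cancelling $\varphi(E)^r$, the recurrence
\[
\varphi(Ax_{n+1} + y_{n+1}) = \varphi(A)(Ax_n + y_n),
\]
which is precisely the hypothesis of Lemma \ref{KeyC}.

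Applying Lemma \ref{KeyC} produces $w\in\s_1^d$ with $Ax_n + y_n = A\varphi^{-1}(A)\cdots\varphi^{1-n}(A)\varphi^{1-n}(w)\in\s_n^d$; in particular $Ax_1 + y_1 = Aw$. A direct computation at $n=1$, using $\varphi(z_1) = \varphi(E)$, $\varphi(BA) = \varphi(E)^r I_d$, and $\varphi(w)\in\s^d$, then gives
\[
\eta_0 = (\varphi_\scrM\otimes\varphi)(\eta_1) = \varphi(E)^{-r}(e_1,\ldots,e_d)\varphi(B)\varphi(Aw) = (e_1,\ldots,e_d)\varphi(w)\in M,
\]
as required. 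The main obstacle will be to organize this unwinding cleanly so as to exactly match the hypothesis of Lemma \ref{KeyC}; the real analytic content---the contraction of Frobenius through the tower $\{S_n\}$---has already been done in Lemmas \ref{KeyB} and \ref{KeyC}, so what remains is essentially formal manipulation with bases and matrices.
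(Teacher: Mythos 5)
Your proposal is correct and follows essentially the same route as the paper: surjectivity via the basis-and-matrix description of Lemma \ref{specialbasis}, the representation of Lemma \ref{alpharep}, the recurrence $\varphi(Ax_{n+1}+y_{n+1})=\varphi(A)(Ax_n+y_n)$, and Lemma \ref{KeyC}, with filtration compatibility reduced to $\Fil^r\scrM\cap M=\Fil^r M$ via Lemma \ref{Lem:intersect}. The only cosmetic difference is that you normalize $x_n\in\s_n^d$ before invoking Lemma \ref{KeyC}, a step that lemma's proof already performs internally.
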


\begin{proof}
	We first prove that (\ref{MTMap}) is an isomorphism at the level of $\s$-modules.
	Suppose that $\{\xi_n\}_{n\ge 0}$ is an arbitrary element of
	$\varprojlim_{\varphi,n}(\scrM\otimes_S S_n[z_n^{-1}])$.
	It suffices to prove that $\xi_0$ lies in the image of the
	canonical inclusion
	$$
	\xymatrix{
		{\iota_0\circ\tau: M} \ar[r]^-{\simeq}_-{\tau} & {M_0=\Fil^0(M\otimes_{\s} \s_0)}
		\ar[r]_-{\iota_0} & {\Fil^0(\scrM\otimes_S S_0)}
		}.
	$$
	Indeed, then projection $\{\xi_n\}_{n\ge 0}\mapsto \xi_0$ followed by
	the inverse of $\iota_0\circ \tau$ on its image provides the desired
	inverse map to (\ref{MTMap}).
		
	To do this, we identify $M$ with its image under $\iota_0\circ\tau$
	and
	compute with bases.
	The map $\varphi_{M,r}:\varphi^*\Fil^r M\rightarrow M$ is a linear isomorphism
	of $\s$-modules, so since $\varphi:\s\rightarrow \s$ is faithfully flat,
	$\Fil^r M$ is finite and free over $\s$ with rank equal to that of $M$;
	this fact also follows easily from the discussion of Remark \ref{ClassicalKisin}.
	Fix an $\s$-basis $\alpha_1,\ldots,\alpha_d$ for $\Fil^r M$
	and set $e_i:=\varphi_{M,r}(\alpha_i)$, so that $e_i$ is then an $\s$-basis of
	$M$.  Since $E^rM\subseteq \Fil^r M$, we obtain matrices
	$A, B\in M_d(\s)$ determined by the conditions
	$$(\alpha_1,\ldots,\alpha_d)=(e_1,\ldots e_d)A\quad\text{and}
	\quad (e_1,\ldots,e_d)E^r = (\alpha_1,\ldots,\alpha_d)B$$
	so that $AB=BA=E^r$.  Note that the associated Breuil module $\scrM$
	admits the ``explicit" description as in Lemma \ref{specialbasis}.
	
	Thanks to Lemma \ref{alpharep}, for all $n\ge 1$ we may write
	$$\xi_{n} = z_{n}^{-r}(e_1,\ldots,e_d)\cdot (Ax_{n}+y_{n})$$
	for vectors $x_{n}\in S_{n}^d$ and $y_{n}\in \Fil^p S_{n}^d$.
	For $n\ge 1$ we then compute
	\begin{align*}
		\xi_{n} = (\varphi_{\scrM}\otimes\varphi)(\xi_{n+1})&=
		\varphi(E)^{-r}z_{n}^{-r}(e_1,\ldots,e_d)\varphi(B)\varphi(Ax_{n+1}+y_{n+1}).
	\end{align*}
	Multiplying both sides by $z_{n}^r\varphi(E)^r$, using the definition of $\xi_{n}$,
	and comparing coefficients of $e_i$ gives
	\begin{align*}
		\varphi(E)^r(Ax_{n}+y_{n}) = \varphi(B)\varphi(Ax_{n+1}+y_{n+1})
	\end{align*}
	as (column) vectors in $S_{n}^d$.
  %because $\{e_i\}$ is an $S_{n}$-basis of $\scrM\otimes_S S_{n}$.
	Multiplying this equality through by $\varphi(A)$,
	and cancelling the resulting factor of $\varphi(E)^r=\varphi(A)\varphi(B)$
	from both sides finally yields the recurrence
	\begin{align*}
		\varphi(Ax_{n+1}+y_{n+1}) = \varphi(A)(Ax_{n}+y_{n}).
	\end{align*}
	for $n\ge 1$.
	But now we are in precisely the situation of Lemma \ref{KeyC}, which
	guarantees that $Ax_1+y_1=Aw_1$ for some $w_1\in \s_1 ^d$ so that
	$$
	\xi_0 = (\varphi_{\scrM}\otimes \varphi)(\xi_1)
	= \varphi(z_1)^{-r}(e_1,\ldots,e_d)\varphi(B)\varphi(A)\varphi(w_1)
	=(e_1,\ldots,e_d)\varphi(w_1)
	$$
	lies in $M$, as desired.
	
	That the map $(\ref{MTMap})$ is Frobenius-compatible and carries $\Fil^r M$ into $\Fil^r \u{M}(\scrM)$
	is clear from definitions.
	To check that it induces an isomorphism on filtrations, it suffices to prove that projection
	$\{\xi_n\}_{n\ge 0}\mapsto \xi_0$ is filtration-compatible.
	This amounts to the assertion that $\Fil^r \scrM \cap M \subseteq \Fil^r M$ inside $\scrM$.
	To verify this, as before, we may write any element of $\Fil^r \scrM$ as
	$(e_1,\ldots, e_d)(Ax+y)$ with $x\in \s^d$ and $y\in \Fil^p S^d$.  If this
	is equal to some element $(e_1,\ldots, e_d)w$ of $M$ with $w\in \s^d$, then
	we must have $Ax+y = w$ in $S^d$.  Multiplying both sides by $B$ gives
	$E^r x + By = Bw$ so since $By\in \Fil^p S^d$ we deduce that the coordinates
	of $Bw$ lie in $\Fil^r S \cap \s = E^r \s$ thanks to Lemma \ref{Lem:intersect}.
	Then since $x\in \s^d$, it follows that $By$ has coordinates in $\Fil^p S \cap \s=E^p\s$,
	and we may write $Bw= E^r v = BA v$ for some $v\in \s^d$.  This implies that $w=Av$
	and hence that $(e_1,\ldots, e_d)w=(\alpha_1,\ldots,\alpha_d)v$ lies in $\Fil^r M$
	as desired.	
\end{proof}

\begin{corollary}\label{IsAFun}
	Let $\scrM$ be any object of $\Mod_{S}^{\varphi,r}$.
	Then $\u{M}(\scrM)$ is an object of $\Mod_{\s}^{\varphi,r}$.
\end{corollary}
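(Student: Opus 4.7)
My plan is to deduce the Corollary directly from the two key results immediately preceding it: the essential surjectivity statement in Lemma \ref{specialbasis}, and the comparison isomorphism in Lemma \ref{Isom}.

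More precisely, given $\scrM \in \Mod_S^{\varphi,r}$, Lemma \ref{specialbasis} produces an object $M$ of $\Mod_{\s}^{\varphi,r}$ (built from the bases $e_i$ and $\alpha_i$ and the matrix data $A,B$) together with a canonical identification $\u{\scrM}(M) \simeq \scrM$ in $\Mod_S^{\varphi,r}$. Feeding this identification into Lemma \ref{Isom} yields a natural isomorphism $M \simeq \u{M}(\u{\scrM}(M)) \simeq \u{M}(\scrM)$ of filtered $\varphi$-modules over $\s$. Since the target of this isomorphism only depends on $\scrM$ and not on the choice of $M$, the $\Mod_{\s}^{\varphi,r}$-structure on $M$ transfers to $\u{M}(\scrM)$.

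Concretely, I would verify each axiom of $\Mod_{\s}^{\varphi,r}$ for $\u{M}(\scrM)$ by transport along this isomorphism: finite freeness over $\s$ (from $M$), the containment $E^r \u{M}(\scrM) \subseteq \Fil^r \u{M}(\scrM)$ and $p$-torsion freeness of the quotient (since the isomorphism is filtration compatible and $E^rM\subseteq \Fil^r M$ with $M/\Fil^r M$ being $p$-torsion free), and the fact that $\varphi_{\u{M}}|_{\Fil^r \u{M}(\scrM)}$ is divisible by $\varphi(E)^r$ with $\varphi_{\u{M},r} := \varphi(E)^{-r}\varphi_{\u{M}}$ having image generating $\u{M}(\scrM)$ as an $\s$-module (since the analogous property holds for $\varphi_{M,r}$ on $M$, and the isomorphism in Lemma \ref{Isom} is Frobenius-compatible).

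There is essentially no new obstacle here beyond unwinding definitions, since all the work has been done in Lemmas \ref{specialbasis} and \ref{Isom}. The only thing worth double-checking is that the filtration $\Fil^r \u{M}(\scrM)$ defined intrinsically in Definition \ref{Def:Mfunctor} coincides with the image of $\Fil^r M$ under the map (\ref{MTMap}); but this is precisely the filtration-compatibility assertion established in the last paragraph of the proof of Lemma \ref{Isom}, where it is shown that $\Fil^r \scrM \cap M \subseteq \Fil^r M$ inside $\scrM$, forcing the projection $\{\xi_n\}_{n\ge 0} \mapsto \xi_0$ to be a filtration isomorphism.
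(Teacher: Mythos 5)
Your proposal is correct and is exactly the paper's argument: the paper's proof of Corollary \ref{IsAFun} reads in its entirety ``This follows immediately from Lemmas \ref{specialbasis} and \ref{Isom},'' which is precisely the combination (essential surjectivity of $\u{\scrM}$ via Lemma \ref{specialbasis}, then transport of structure along the isomorphism of Lemma \ref{Isom}) that you spell out. Your additional check that the intrinsic filtration on $\u{M}(\scrM)$ matches the transported one is just an unwinding of the filtration-compatibility already established in Lemma \ref{Isom}, so nothing new is needed.
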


\begin{proof}
	This follows immediately from Lemmas \ref{specialbasis} and \ref{Isom}.
\end{proof}

We now have functors $\u{\scrM}:\Mod_{\s}^{\varphi,r}\rightarrow \Mod_{S}^{\varphi,r}$
and $\u{M}:\Mod_S^{\varphi,r}\rightarrow \Mod_{\s}^{\varphi,r}$ and a functorial
isomorphism $\u{M}\circ\u{\scrM}\simeq \id$ on $\Mod_{\s}^{\varphi,r}$.
To complete the proof of Theorem \ref{MainThm},
it therefore remains to exhibit a natural transformation $\u{\scrM}\circ \u{M}\simeq \id$
of functors on $\Mod_S^{\varphi,r}$.

Let $(\scrM,\Fil^r\scrM,\varphi_{\scrM,r})$ be any object of $\Mod_S^{\varphi,r}$.
We define an $S$-linear map
\begin{equation}
	\u{\scrM}(\u{M}(\scrM))=S\otimes_{\s}\varprojlim_{\varphi,n} \Fil^0(\scrM\otimes_S S_n[z_n^{-1}]) \rightarrow \scrM\otimes_S S_0 \simeq \scrM
	\quad\text{by}\quad
	s\otimes \{\xi_n\}\mapsto s\cdot \xi_0.\label{Smap}
\end{equation}

\begin{lemma}
	The map $(\ref{Smap})$ is a natural isomorphism of filtered $\varphi$-modules over $S$.
\end{lemma}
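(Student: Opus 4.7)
The plan is to combine Lemma \ref{specialbasis} (essential surjectivity of $\u{\scrM}$) with Lemma \ref{Isom} (the natural isomorphism $\id \simeq \u{M}\circ\u{\scrM}$) to reduce the statement to a direct computation. First I will choose, via Lemma \ref{specialbasis}, an object $M\in \Mod_{\s}^{\varphi,r}$ together with an identification $\u{\scrM}(M) = \scrM$; in particular, $\scrM = S\otimes_{\s} M$ as $S$-modules. Lemma \ref{Isom} then provides a natural $\s$-linear isomorphism $\eta_M: M\simeq \u{M}(\scrM)$ characterized by sending $m\in M$ to the unique $\varphi$-compatible sequence $\{\xi_n\}$ whose $n=0$ coordinate is $\iota_0(\tau(m)) = m\otimes 1\in \scrM$.

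Next I will unwind (\ref{Smap}) under the induced $S$-linear isomorphism $\id_S\otimes \eta_M : S\otimes_{\s} M \simeq \u{\scrM}(\u{M}(\scrM))$. A routine calculation shows the composite with (\ref{Smap}) to be the tautological map $s\otimes m \mapsto s\cdot(m\otimes 1)$, which is the identity under the identification $S\otimes_{\s} M = \scrM$; hence (\ref{Smap}) is an $S$-linear isomorphism. Frobenius compatibility will follow immediately from the definitions: the Frobenius on $\u{\scrM}(\u{M}(\scrM))$ sends $s\otimes \{\xi_n\}$ to $\varphi(s)\otimes \{(\varphi_{\scrM}\otimes\varphi)(\xi_n)\}$, whose $n=0$ coordinate equals $\varphi_{\scrM}(\xi_0)$ (since $S_0 = S$ and $z_0 = 1$), so its image under (\ref{Smap}) is $\varphi(s)\varphi_{\scrM}(\xi_0) = \varphi_{\scrM}(s\xi_0)$, as required.

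The main work is the filtration check. By the definition of $\u{\scrM}$, the submodule $\Fil^r \u{\scrM}(\u{M}(\scrM))$ is generated as an $S$-module by the images of $\Fil^r S\otimes_{\s} \u{M}(\scrM)$ and $S\otimes_{\s} \Fil^r \u{M}(\scrM)$. Under $\eta_M^{-1}$, the submodule $\Fil^r \u{M}(\scrM) = \{\{\xi_n\}: \xi_0\in \Fil^r \scrM\}$ corresponds to $\{m\in M : m\otimes 1\in \Fil^r \scrM\} = M\cap \Fil^r \scrM$, and the crucial fact needed is the equality $M\cap \Fil^r \scrM = \Fil^r M$ inside $\scrM$. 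This is precisely the content of the final paragraph of the proof of Lemma \ref{Isom}, established using the structural bases and matrices of Lemma \ref{specialbasis} together with Lemma \ref{Lem:intersect}. Granting this identification, the image of $\Fil^r \u{\scrM}(\u{M}(\scrM))$ under (\ref{Smap}) is $\Fil^r S\cdot M + S\cdot \Fil^r M\subseteq \scrM$, which equals $\Fil^r \scrM$ by the very definition of the functor $\u{\scrM}$ applied to $M$. Naturality in $\scrM$ follows from the functoriality of $\u{M}$ and $\u{\scrM}$ together with the manifestly natural description of (\ref{Smap}). The only substantive obstacle is the filtration identification, and as noted it is already handled by the work performed in the proof of Lemma \ref{Isom}.
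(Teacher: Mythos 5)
Your argument is correct, and it reaches the conclusion by a genuinely different route than the paper. The paper's proof checks filtration compatibility directly from the definition of $\Fil^r\u{\scrM}(\cdot)$, uses Corollary \ref{IsAFun} to see that $(\ref{Smap})$ is a map of free $S$-modules of equal rank so that only surjectivity needs to be checked, and then exhibits explicit $\varphi$-compatible preimages: the sequences $\xi_{i,n}=z_n^{-r}(e_iA)\varphi^{-1}(A)\cdots\varphi^{1-n}(A)$ hitting the basis vectors $e_i$, and the analogous elements $\nu_n$ and $\eta$ hitting $\Fil^r\scrM$. You instead choose $M$ with $\u{\scrM}(M)=\scrM$ via Lemma \ref{specialbasis} and observe that $(\ref{Smap})$ is a two-sided inverse to $\id_S\otimes\eta_M$, where $\eta_M\colon M\simeq \u{M}(\scrM)$ is the isomorphism of Lemma \ref{Isom}; bijectivity is then automatic, and the filtration statement reduces to the equality $M\cap\Fil^r\scrM=\Fil^r M$ established in the final paragraph of the proof of Lemma \ref{Isom}, together with the definition $\Fil^r\u{\scrM}(M)=(\Fil^r S)\scrM+S\cdot\Fil^r M$. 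The two proofs rest on the same inputs---your $\eta_M(e_i)$ is precisely the paper's sequence $\{\xi_{i,n}\}$, as one sees by iterating Lemma \ref{Lifting}---but your packaging avoids repeating the explicit matrix computations and makes transparent that the two natural transformations constructed in this section are mutually inverse. What the paper's explicit construction buys is that the verification never invokes the non-canonical choice of $M$; as you correctly note, this choice is harmless for naturality since $(\ref{Smap})$ itself is defined without reference to it, so nothing is lost in your approach.
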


\begin{proof}
	Naturality in $\scrM$ is clear, as is compatibility with Frobenius.
	By the very definition (\ref{BC}) of $\u{\scrM}$, the submodule $\Fil^r(\u{\scrM}(\u{M}(\scrM)))$ is generated
	by the images in $S\otimes_{\s} \u{M}(\scrM)$ of $S\otimes_{\s} \Fil^r \u{M}(\scrM)$ and
	$\Fil^r S\otimes_{\s} \u{M}(\scrM)$, so due to Definition \ref{Def:Mfunctor},
	any element of this submodule is a sum of
	simple tensors $s\otimes \{\xi_n\}$ with either $s\in \Fil^r S$ or $\xi_0\in \Fil^r\scrM$.
	Since $(\Fil^r S)\scrM\subseteq \Fil^r\scrM$, it follows at once that the map
	(\ref{Smap}) is compatible with filtrations.
	
	Let us prove that (\ref{Smap}) is an isomorphism.  Thanks to Corollary \ref{IsAFun},
	the map (\ref{Smap}) is an $S$-linear map of {\em free} $S$-modules of the same rank,
	so it suffices to prove that it is surjective.  Let $(e_1,\ldots,e_d)$, $(\alpha_1,\ldots,\alpha_d)$,
	$A$ and $B$ be as in Lemma \ref{specialbasis}.  It is clearly enough to prove that
	$e_i$ is in the image of (\ref{Smap}) for each $i$.  For $n\ge 1$ we define
	$$(\xi_{1,n},\ldots,\xi_{d,n}):=z_n^{-r}((e_1\ldots,e_d)A)\varphi^{-1}(A)\cdots \varphi^{1-n}(A).$$
	As $(\alpha_1, \ldots, \alpha_d )=(e_1,\ldots,e_d)A$ and $\alpha_i$ lies in $\Fil^r\scrM$, this really is an element of $\Fil^0(\scrM\otimes_S S_n[z_n^{-1}])$
	for $n\ge 1$ and we set $\xi_{i,0}:=e_i$.
	We then compute for $n\ge 1$
	\begin{align*}
		(\varphi_{\scrM}\otimes \varphi)(\xi_{1,n},\ldots,\xi_{d,n}) &= \varphi(E)^{-r}z_{n-1}^{-r} (e_1,\ldots,e_d)
		\varphi(B)\varphi(A)A\cdots\varphi^{2-n}(A)\\
		&=z_{n-1}^{-r} (e_1, \ldots, e_d )A\varphi^{-1}(A)\cdots \varphi^{2-n}(A) =(\xi_{1,n-1},\ldots,\xi_{d,n-1})
	\end{align*}
	so that $\xi_i:=\{\xi_{i,n}\}_{n\ge 0}$ lies in $\u{M}(\scrM)$ and $1\otimes \xi_i$ maps to $e_i$ via
	(\ref{Smap}).
	
	Finally, we must check that the map on $\Fil^r$'s is an isomorphism, and to do so
	it suffices to prove that it is surjective.  We know from Lemma \ref{specialbasis}
	that any element $m\in \Fil^r\scrM$ may be expressed as $m=(e_1,\ldots e_d)(Ax+y)$
	where $x\in \s^d$ and $y\in \Fil^r S^d$.
	For $n\ge 1$ define
	$$\nu_n:=z_n^{-r}(e_1,\ldots,e_d)A\varphi^{-1}(A)\cdots \varphi^{1-n}(A)\varphi^{-n}(A)\varphi^{-n}(x),$$
	which again lies in $\Fil^0(\scrM \otimes_{S} S_n[z_n^{-1}])$, and put $\nu_0:=(e_1,\ldots,e_d)Ax$,
	which lies in $\Fil^r\scrM$.  Then as before one checks that $\nu:=\{\nu_n\}$
	is an element of $\Fil^r \u{M}(\scrM)$ with $1\otimes\nu$ mapping to $\nu_0$.
	Since $y=(y_1,\ldots,y_d)^T\in \Fil^r S^d$, the element $\eta:=\sum_i y_i \otimes \xi_i$
	lies in $\Fil^r S\otimes \u{M}(\scrM)$ and maps to $(e_1,\ldots,e_d)y$ under (\ref{Smap}).
	Thus, the sum $1\otimes \nu + \eta$ is an element of $\Fil^r \scrM(\u{M}(\scrM))$
	mapping to $m$, and the map on filtrations is surjective, as desired.
\end{proof}

\section{Lattices in Galois representations}

In this section, we briefly review the relationship between
the semilinear algebra categories of \S\ref{BTconst} and
(stable lattices in) Galois representations.

We keep the notation of \S\ref{Intro}, and begin by recalling the definitions of the period rings that we will need.
Let
$R:=\varprojlim \O_{\overline K}/ p \O_{\overline K}$, with the projective limit taken along the map $x\mapsto x^p$.
Then $R$ is a perfect valuation ring of equicharacteristic $p$ and residue field $\o{k}$, equipped
with a natural coordinate-wise action of $G_K$.  We put $A_{\inf}:=W(R)$, and denote by
$\theta_{\inf} : A_{\inf} \to \O_{\C_K}$ the unique ring homomorphism
lifting the  projection $R \to \O_{\overline K}/ p$
onto the first factor in the inverse limit. We denote by $A_{\cris}$ the $p$-adic completion
of the divided power envelope of $A_{\inf}$ with respect to the ideal $\ker(\theta_{\inf})$.
As usual, we write $B_{\cris}^+= A_{\cris}[1/p]$ and  define $B_{\dR}^+$ to be the $\ker(\theta_{\inf}[1/p])$-adic completion of $A_{\inf}[1/p]$. For any subring $A \subset B^+_{\dR}$, we define
$\Fil^i A = A \cap (\ker(\theta_{\dR}))^iB^+_{\dR}$, with $\theta_{\dR}:B_{\dR}^+\twoheadrightarrow \C_K$ the map induced by $\theta_{\inf}$.

Recall that we have fixed a compatible sequence $\{\pi_i\}_{i\ge 0}$ of $p$-power roots
of our fixed uniformizer $\pi_0\in K$.
Then $\{\pi_i\}_{i \geq n}$ defines an element $\u \pi_n \in R$,
and we write $[\underline \pi_n  ]\in A_{\inf}$ for its Techm\"uller lift.
For each $n$, we then embed the $W(k)$-algebra $W(k)[u_n]$ into $A_{\inf}\subset A_{\cris}$ by the map
$u_n\mapsto [\underline \pi_n ]$.   These maps
extend to embeddings $\s_n \hookrightarrow A_{\inf}$ which intertwine
the given Frobenius endomorphism on $\s_n$ with the Witt vector Frobenius on $A_{\inf}$,
and which are compatible with the $W(k)$-algebra inclusions $\s_n\hookrightarrow \s_{n+1}$
that identify $\varphi(u_{n+1})=u_n$.
As before, we omit the subscript when it is zero, and simply write $\s = \s_0$ and $u = u_0$.

Recall that $S_n $ is   the $p$-adic completion of the divided power
envelope of $\s_n$ with respect to the ideal generated by $E(u_0)=E_n (u_n )$,
equipped with the $p$-adic topology.
We write $\Fil ^m  S\subset S $ for the closure of the ideal generated by
$\gamma_i (E(u)):= \frac{E_n(u_n)^i}{i!}$ with $ i \geq m$.
Using the fact that $\ker(\theta_{\inf})$ is principally generated by $E([\u{\pi}_0])=E_n([\u\pi_n ])$,
it is not difficult to prove that
the embeddings $\s_n\hookrightarrow A_{\inf}$ uniquely extend to continuous $W(k)$-algebra
embeddings $ S_n \inj A_{\cris}$ compatible with Frobenius $\varphi$ and filtration.
As in \S\ref{Intro}, we write $K_n:=K(\pi_n)$ and set $K_\infty := \cup_n K_n$.
We define $G_\infty:= \Gal (\overline K / K_\infty)$ and note that we in fact have
$\s_n \subset A_{\inf}^{G_\infty}$ and $S_n \subset A_{\cris}^{G_\infty}$.

With these preliminaries, we now define certain functors from the categories of
(filtered) Breuil--Kisin and Breuil modules to the category of Galois representations on
finite free $\Z_p$-modules.

Let $M \in \Mod_{\s}^{\varphi,r}$ be a  filtered Breuil--Kisin module.
Remembering that $\Fil ^ r A_{\inf} = E(u )^r A_{\inf}$, we define
$\varphi_{A_{\inf}, r} : \Fil ^r A_{\inf} \to A_{\inf}$ by $\varphi_{A_{\inf}, r} (E(u)^r x):= \varphi (x),$
and set
\begin{equation}\label{eqn-deineTS}
\u T_{\s} (M):  = \Hom_{\s, \Fil^r, \varphi_r} (M, A_{\inf}),
\end{equation}
which, as one checks easily, is naturally a $\Z_p[G_\infty]$-module.
Similarly, the restriction of $\varphi$ on $A_{\cris}$ to $\Fil^r A_{\cris}$
has image in $p^r A_{\cris}$, so we may define
$\varphi_{A_{\cris, r}} : \Fil ^r A_{\cris}  \to A_{\cris}$ by
$\varphi_{A_{\cris}, r } = \varphi _{A_{\cris}}/p^r$.
For any quasi-Breuil module  $\scrM \in \Mod_S^{\varphi, r}$ we may then attach the $\Z_p[G_{\infty}]$-module
\begin{equation} \label{eqn-defineTcris}
\u T_{\cris} (\scrM) : = \Hom_{S, \Fil^r, \varphi_r } (\scrM, A_{\cris}).
\end{equation}

Before proceeding further, we recall a variant of the functor $\u{T}_{\s}$ on the
category of (classical) Breuil--Kisin modules of Remark \ref{ClassicalKisin}.
Let $M\in \Mod_{\s}^{\varphi,r}$, and let $(\m,\varphi_{\m})$ be the associated
classical Breuil--Kisin module.  Then, as in (\ref{BK-dictionary}) and (\ref{BKFil-Alt}), we have
$M = \varphi ^* \m$ with $\varphi_M=\varphi\otimes\varphi_{\m}$ and
$$\Fil ^r M = \Fil ^r \varphi ^*\m = \{ x \in \varphi ^*\m | (1 \otimes \varphi )(x) \in E^r\m\}.$$
It is clear from these descriptions that the restriction of $\varphi_M$ to $\Fil^r M$
has image contained in $\varphi(E)^r M$, so we may and do define $\varphi_{M,r}:=\varphi(E)^{-r}\varphi_M$
on $\Fil^r M$.
We then set
$$ T_\s (\m) : = \Hom_{\s, \varphi} (\m , A_{\inf}).$$

\begin{lemma}\label{lem-dual}
With notation as above
\begin{enumerate}
\item There is a natural isomorphism $\u T_\s (M) \simeq T _\s (\m)$ of $\Z_p [G_\infty]$-modules;\label{KisinGalois}
\item There is an isomorphism of functors $ \u T_\s \simeq \u T_{\cris}\circ \u \scrM$ on $\Mod_{\s}^{\varphi,r}$.
\end{enumerate}

\end{lemma}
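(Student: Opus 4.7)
For (1), I would use the dictionary from Remark \ref{ClassicalKisin} identifying $M = \varphi^*\m$, $\Fil^r M = \psi(\m)$ with $\wt\varphi\circ\psi = E^r\cdot\id_\m$, and $\varphi_{M,r}(\psi(n)) = 1\otimes n$. Define $\Psi: T_\s(\m)\to \u T_\s(M)$ by sending $g:\m\to A_{\inf}$ to the map $\Psi(g): s\otimes n\mapsto s\cdot \varphi_{A_{\inf}}(g(n))$. Well-definedness on tensor relations and $\s$-linearity are routine; for $\Fil^r$-compatibility, the relation $g\circ \varphi_\m = \varphi_{A_{\inf}}\circ g$ gives
\[
\Psi(g)(\psi(n)) = g(\wt\varphi(\psi(n))) = g(E^r n) = E^r g(n) \in \Fil^r A_{\inf},
\]
and $\varphi_r$-compatibility then follows from $\Psi(g)(1\otimes n) = \varphi(g(n)) = \varphi_{A_{\inf},r}(E^r g(n))$ together with $\varphi_{M,r}(\psi(n)) = 1\otimes n$. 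The inverse sends $f$ to $n\mapsto f(\psi(n))/E^r$, which is well-defined because $A_{\inf}$ is $E$-torsion free and $f(\psi(n))\in E^r A_{\inf}$. Galois equivariance is immediate since the embedding $\s\hookrightarrow A_{\inf}$ lands in $A_{\inf}^{G_\infty}$.

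For (2), I would define $\Theta: \u T_\s(M)\to \u T_{\cris}(\u\scrM(M))$ by $S$-linear extension of scalars: set $\Theta(f)(s\otimes m) := s\cdot f(m)$, viewing $f(m)\in A_{\inf}\subseteq A_{\cris}$. The $\varphi_r$-compatibility verification hinges on the parallel identities $\varphi_{\scrM,r}(\alpha) = c_0^r\varphi_{M,r}(\alpha)$ on $\Fil^r M\subseteq \Fil^r\scrM$ and $\varphi_{A_{\cris},r}(a) = c_0^r\varphi_{A_{\inf},r}(a)$ on $E^r A_{\inf}$, so the two factors of $c_0^r$ cancel; on $(\Fil^r S)\scrM$, compatibility reduces to $f\circ \varphi_M = \varphi\circ f$, itself a consequence of the $\varphi_r$-compatibility of $f$ applied to $E^r M\subseteq \Fil^r M$. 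Injectivity of $\Theta$ is clear since $\Theta(f)|_{1\otimes M}=f$.

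The main obstacle is surjectivity. Given $g\in \u T_{\cris}(\u\scrM(M))$, the restriction $f(m) := g(1\otimes m): M\to A_{\cris}$ is $\s$-linear and preserves the filtrations and Frobenius, but is a priori only $A_{\cris}$-valued; one must show $f(M)\subseteq A_{\inf}$. Using the basis from Lemma \ref{specialbasis}, the compatibility conditions translate into the Frobenius equation $\varphi(v) = v\cdot\varphi(B)$ for $v = (f(e_1),\ldots,f(e_d))\in A_{\cris}^d$, where $B\in M_d(\s)$ satisfies $AB = BA = E^r I_d$. Combined with part (1)---which identifies $\u T_\s(M) \simeq T_\s(\m)$ via $A_{\inf}$-valued solutions $w$ of the companion equation $\varphi(w) = wB$ and the formula $v_i = \varphi(w_i)$---the crux becomes to show that every $A_{\cris}$-valued solution of $\varphi(v) = v\varphi(B)$ descends to an $A_{\inf}$-valued $w$ via $v = \varphi(w)$. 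This descent, based on the matrix identity $AB = BA = E^r I_d$ and the structure of $A_{\cris}$ as a $\varphi$-module over $A_{\inf}$, is the key technical step.
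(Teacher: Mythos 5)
Your treatment of part (1) is correct and is essentially the paper's argument: the paper factors the isomorphism as $\u T_\s(M)\simeq \Hom_{\s,\varphi}(M,A_{\inf})$ (showing, via invertibility of $\varphi_{A_{\inf}}$, that a merely $\varphi$-compatible map automatically respects $\Fil^r$ and $\varphi_r$) composed with $\Hom_{\s,\varphi}(\m,A_{\inf})\simeq\Hom_{\s,\varphi}(\varphi^*\m,A_{\inf})$ given by exactly your formula $g\mapsto\bigl(\sum a_i\otimes n_i\mapsto\sum a_i\varphi(g(n_i))\bigr)$; you have simply composed the two maps and verified the composite directly. That part is fine.

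Part (2) is where the genuine gap lies. Your map $\Theta$ is the right one and your reduction of its surjectivity to the descent statement---that every $v\in A_{\cris}^d$ with $\varphi(v)=v\,\varphi(B)$ arises as $v=\varphi(w)$ for some $w\in A_{\inf}^d$ with $\varphi(w)=wB$---is a correct reformulation of what must be proved. But you then declare this descent to be ``the key technical step'' without proving it, and it does not follow formally from $AB=BA=E^rI_d$ together with generalities about $A_{\cris}$ as a $\varphi$-module over $A_{\inf}$: the ring $A_{\cris}$ is vastly larger than $A_{\inf}$, Frobenius on $A_{\cris}$ is neither surjective nor well-behaved with respect to the filtration in the naive sense, and a priori the $\varphi$-equation could have more solutions over $A_{\cris}$ than over $A_{\inf}$. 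This descent is precisely the content of \cite[Lemma 3.3.4]{LiuT-CofBreuil}, which is what the paper invokes to dispose of part (2) in one line (reducing the whole lemma to part (1)); Liu's proof of that statement is itself a nontrivial argument (injectivity plus an equality of ranks and a reduction modulo $p$, resting on the structure theory of \'etale $\varphi$-modules and torsion Breuil/Kisin modules), not a formal manipulation. So as written your proof of (2) is incomplete exactly at the point where all the difficulty is concentrated; you should either carry out that descent or cite the result that provides it.
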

\begin{proof} By \cite{LiuT-CofBreuil}[Lemma 3.3.4], there is a natural isomorphism  $ T_\s(\m) \simeq \u T_{\cris} (\u \scrM(\m))$ of $\Z_p[G_\infty]$-modules,  so it suffices to prove (\ref{KisinGalois}). Using the
relation $\varphi_{\star,r}(E^rx) = \varphi_{\star}(x)$ for $\star=M, A_{\inf}$, one shows that
there is a canonical map
\begin{equation}
	\xymatrix{
		{\u T_\s (M) =  \Hom_{\s, \Fil^r, \varphi_r} (M, A_{\inf})}\ar[r] & {\Hom_{\s, \varphi} (M , A_{\inf})}
		}\label{GalModMap}
\end{equation}
of $\Z_p[G_{\infty}]$-modules that is visibly injective.  We claim it is surjective as well,
and hence an isomorphism.
To see this, let $f \in \Hom_{\s, \varphi} (M , A_{\inf})$ and $x \in \Fil^r M$ be arbitrary.
Again using the above relation, we compute
\begin{align*}
	\varphi_{A_{\inf}}(f(x)) = f(\varphi_M(x)) = f(\varphi_{M,r}(E^rx)) = f(\varphi(E^r)\varphi_{M,r}(x)) =
	\varphi(E)^r f(\varphi_{M,r}(x)),
\end{align*}
so recalling that $\varphi_{A_{\inf}}$ is an {\em automorphism} of $A_{\inf}$ we conclude
that $f(x) = E^r \varphi_{A_{\inf}}^{-1} f(\varphi_{M,r}(x))$ and $f$ carries $\Fil^r M$
into $\Fil^r A_{\inf}$.  Written another way, this last equality reads
$$f(\varphi_{M,r}(x)) = \varphi_{A_{\inf}}(E^{-r} f(x)) = \varphi_{A_{\inf},r}(f(x))$$
and $f$ is compatible with $\varphi_r$'s.  This shows that (\ref{GalModMap}) is indeed
an isomorphism as claimed.

To complete the proof, it now suffices to exhibit a natural isomorphism of $\Z_p[G_{\infty}$]-modules
\begin{equation}
\xymatrix{
	{T_\s(\m) := \Hom_{\s , \varphi} (\m,A_{\inf})} \ar[r]^-{\simeq} &
	{\Hom_{\s,\varphi}(\varphi^*\m,A_{\inf})=\Hom_{\s , \varphi} (M , A_{\inf})}
	}.\label{Map:GaloisCompare}
\end{equation}
To do this, for $f \in T_\s(\m)$ we define $\iota (f) \in \Hom_{\s } (\varphi ^*\m, A_{\inf})$
by
$$  \iota(f)(\sum_i a_i \otimes m _i):= \sum _i a_i \varphi (f(m_i))$$
Since $f$ is compatible with Frobenius, it is straightforward to see that the
same is true of $\iota(f)$, so $\iota$ induces a map (\ref{Map:GaloisCompare}).
Using the fact that $\varphi_{A_{\inf}}$ is bijective, one then checks
easily that this map is an isomorphism as claimed.
\end{proof}

In order to use the category of Breuil modules to study $G_K$ representations
(rather than just $G_{\infty}$-representations), we require the additional structure
of a monodromy operator.
Let $V$ be a crystalline representation with Hodge-Tate weights in $\{0, \dots, r\}$ and $T \subset V$ a
$G_K$-stable $\Z_p$-lattice. We denote $T ^\vee= \Hom_{\Z_p} (T , \Z_p)$ the $\Z_p$-linear {dual} of
$T$ and put $V^\vee := T^\vee [1/ p]$. For ease of notation, we write $D:= D_{\cris}(V^\vee)$
for the associated filtered $(\varphi,N)$-module; of course $N_D=0$ as $V$ is crystalline.

By \cite{Breuil-Griffith}, we can functorially promote $D$ to
a \emph{filtered $(\varphi, N)$-module}  $\scrD(V)= (\scrD, \{\Fil^j \scrD\}_{j}, \varphi_\scrD, N_\scrD)$ over $S[1/p]$
by defining

\begin{itemize}
\item $\scrD  := S \otimes_{W(k)} D$ with $\varphi_\scrD:=\varphi_S\otimes \varphi_D$
\item $N_{\scrD}:=N_S\otimes \id + \id\otimes N_D=N_S\otimes \id$,
where $N_S:S\rightarrow S$ is the unique continuous $W$-linear derivation with $N(u)=-u$.
\item $\Fil ^j  \scrD $ is defined inductively by setting $\Fil ^0  \scrD :=  \scrD$ and
\begin{equation}\label{eqn-fil}
\Fil ^{j}  \scrD  = \{ x \in  \scrD  | N(x) \in \Fil ^{j-1}  \scrD ,\ \text{and}\  f_{\pi_0} (x) \in \Fil^j D_K  \}
\end{equation}
where $f_{\pi_0} :  \scrD \to D_K := \O_K \otimes_{W(k)} D $ is the projection induced by
the map $f_{\pi_0} : S\twoheadrightarrow \O_K$ sending $u$ to $\pi_0$.
\end{itemize}
The reader can consult the precise definition of filtered $(\varphi, N)$-modules over $S[1/p]$ in \cite{LiuT-CofBreuil},
which we do not need here.   Following \cite{Breuil-SDLattice}, we introduce:
\begin{definition}\label{Def-SDlattice}
\emph{A strongly divisible $S$-lattice $\scrM$ inside $\scrD= \scrD(V)$} is a finite free $S$-submodule $\scrM \subset \scrD$ that is stable under $\varphi_{\scrD}$ and satisfies
\begin{itemize}
 \item $\scrM [1/p]= \scrD$;
 \item $\varphi_{\scrD} (\Fil ^r \scrM) \subset p ^r \scrM$ where $\Fil ^r \scrM = \scrM \cap \Fil ^r \scrD$;
 \item $N_\scrD(\scrM) \subset \scrM$.
 \end{itemize}
\end{definition}
Assuming $r\le p-2$, Breuil constructs a functor $T_{\st}$ on the category
of strongly divisible lattices $\scrM$ in $\scrD$ with the property that
$T_{\st} (\scrM) \subset V$ is a $G_K$-stable $\Z_p$-lattice.
We refer the reader to \cite{Breuil-SDLattice} or \cite{LiuT-CofBreuil} for details.
The following theorem, synthesized from  \cite{Breuil-SDLattice}, \cite{KisinFcrystal}, and \cite{LiuT-CofBreuil},
summarizes the relations between Breuil--Kisin modules, strongly divisible $S$-lattices, and lattices in Galois representations:

\begin{theorem}\label{thm-classical} Let $V$ be a crystalline $G_K$-representation with Hodge-Tate weights in
$\{0, \dots, r\}$ and $T \subset V$ a $G_K$-stable $\Z_p$-lattice. Then
\begin{enumerate}
\item There is a unique filtered Breuil--Kisin module $M(T)$ of height $r$ with $\u T_\s (M(T)) \simeq T^\vee|_{G_\infty}$.\label{KisinExists}
\item There exists an $S[1/p]$-linear isomorphism $\alpha_S: \u \scrM (M(T)) [1/p]\simeq \scrD(V)$ which is compatible with $\varphi$ and filtrations.
\label{BreuilCompatibilityIsogeny}
\item If $r \leq p-2$, then the functor $T_{\st}$ induces an anti-equivalence between the category of strongly divisible $S$-lattices and the category of $G_K$-stable $\Z_p$-lattices $T $ in crystalline $G_K$-representations with Hodge-Tate weights in $\{0, \dots , r\}$.
\label{TstAntiEquiv}
\item In the situation of $(\ref{TstAntiEquiv})$, let
$\scrM(T)$ be the strongly divisible $S$-lattice with $T_{\st} (\scrM) \simeq T ^\vee$.
Then there is a natural isomorphism $\u \scrM (M(T)) \simeq \scrM (T)$ in $\Mod_S^{\varphi, r}$.
\label{BreuilCompatibility}
\end{enumerate}
\end{theorem}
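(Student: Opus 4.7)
The plan is to assemble the four parts from the works of Kisin \cite{KisinFcrystal}, Breuil \cite{Breuil-SDLattice}, and \cite{LiuT-CofBreuil}, translated into our framework via the dictionary of Remark \ref{ClassicalKisin} and Lemma \ref{lem-dual}.

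For (\ref{KisinExists}), I would start from Kisin's construction of a (classical) Breuil--Kisin module $\m$ of height $r$ attached to $T^\vee$, uniquely characterized by a natural $\Z_p[G_\infty]$-equivariant isomorphism $T_\s(\m)\simeq T^\vee|_{G_\infty}$. Setting $M(T):=\varphi^*\m$ with the Frobenius and filtration of Remark \ref{ClassicalKisin} produces the desired filtered Breuil--Kisin module in $\Mod_\s^{\varphi,r}$, and Lemma \ref{lem-dual}(\ref{KisinGalois}) transports the Galois comparison to $\u T_\s(M(T))\simeq T^\vee|_{G_\infty}$. Uniqueness of $M(T)$ is inherited from that of $\m$, which follows from Kisin's full-faithfulness result for the functor $T_\s$ on height-$r$ modules arising from crystalline representations.

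For (\ref{BreuilCompatibilityIsogeny}), Kisin's comparison theorem supplies a natural $S[1/p]$-linear isomorphism
$$S\otimes_{\varphi,\s}\m[1/p]\simeq \scrD(V)$$
of filtered $(\varphi,N)$-modules over $S[1/p]$. Since the description in Remark \ref{ClassicalKisin} identifies $\u\scrM(M(T))=S\otimes_{\varphi,\s}\m$ as an object of $\Mod_S^{\varphi,r}$, this gives the required $\alpha_S$, compatible with $\varphi$ and filtrations. Part (\ref{TstAntiEquiv}) is precisely the main theorem of \cite{Breuil-SDLattice} for $r\le p-2$, with a streamlined treatment in \cite{LiuT-CofBreuil}, so there is nothing to prove.

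For (\ref{BreuilCompatibility}), the strategy is to use the anti-equivalence of (\ref{TstAntiEquiv}) to reduce the claim to verifying two things: (a) that $\u\scrM(M(T))$ is a strongly divisible $S$-lattice in $\scrD(V)$, and (b) that its associated Galois lattice satisfies $T_{\st}(\u\scrM(M(T)))\simeq T^\vee$. The filtered $\varphi$-module structure inside $\scrD(V)$ comes from (\ref{BreuilCompatibilityIsogeny}); the construction of a monodromy operator on $\u\scrM(M(T))$ compatible with $N_\scrD$, and the verification that $\u\scrM(M(T))$ is $N$-stable, is carried out in \cite[\S3]{LiuT-CofBreuil}. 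Finally, $T_{\st}(\u\scrM(M(T)))\simeq T^\vee$ is obtained by extending the $G_\infty$-equivariant isomorphism $T_\s(\m)\simeq T^\vee|_{G_\infty}$ of part (\ref{KisinExists}) to a full $G_K$-equivariant identification via the crystalline comparison with $A_{\cris}$, precisely as in \cite[Lemma 3.3.4]{LiuT-CofBreuil} (already invoked in the proof of Lemma \ref{lem-dual}). Two strongly divisible lattices with the same $T_{\st}$ must agree by (\ref{TstAntiEquiv}), giving $\u\scrM(M(T))\simeq \scrM(T)$.

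The principal obstacle -- and the reason the hypothesis $r\le p-2$ enters essentially -- is the construction and integrality of the monodromy operator on the $S$-lattice $\u\scrM(M(T))$: the operator $N$ does not preserve the $\s$-structure of $M(T)$, so one must descend $N_\scrD$ from $\scrD(V)$ (or from $A_{\cris}$) to the $S$-lattice, and this descent relies on the Hodge--Tate weight bound through the convergence of the usual series defining $N$ modulo appropriate powers of $E$.
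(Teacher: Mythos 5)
Your proposal is correct and follows essentially the same route as the paper: both reduce to the classical (unfiltered) versions of the four statements in \cite{KisinFcrystal}, \cite{Breuil-SDLattice}, and \cite{LiuT-CofBreuil}, and then transport them to the filtered setting via the dictionary of Remark \ref{ClassicalKisin} and Lemma \ref{lem-dual}. The only cosmetic difference is attribution of part (\ref{BreuilCompatibilityIsogeny}), which the paper cites from \cite[\S 3.2]{LiuT-CofBreuil} rather than from Kisin directly; your extra remarks on the monodromy operator are a reasonable gloss on why $r\le p-2$ enters but are not needed beyond the citations.
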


\begin{proof}
Consider the version of the theorem with the classical Breuil--Kisin module $\m(T)$
in place of its filtered counterpart $M(T)$ and the functor $T_{\s}$
in place of $\u{T}_{\s}$.  In this scenario,
(\ref{KisinExists}) is proved in \cite{KisinFcrystal}, while (\ref{BreuilCompatibilityIsogeny}) is
proved in \cite{LiuT-CofBreuil}[\S 3.2].  We remark that the constructions of
the filtrations on $\scrD(V)$ and $\u{\scrM}(\m(T))$ are very different,
and that these two results have no restriction on $r$ and hold more generally
in the context of semistable $G_K$-representations.
Statements (\ref{TstAntiEquiv}) and (\ref{BreuilCompatibility}) of this variant of Theorem \ref{thm-classical}
are the main results of \cite{LiuT-CofBreuil}, and also hold more generally for semistable $V$.
Now by Lemma \ref{lem-dual}, we have $T_\s (\m(T)) \simeq \u T_\s (M(T))$ and $\u \scrM( M(T)) = \u\scrM (\m(T))$,
which, thanks to Remark \ref{ClassicalKisin}, then gives our version of the theorem.
\end{proof}

For future use, let us record a refinement of statements (\ref{KisinExists}) and
(\ref{BreuilCompatibilityIsogeny}) of Theorem \ref{thm-classical}.
Fix a $G_K$-stable $\Z_p$-lattice $T$ in a crystalline $G_K$-representation $V$
and for notational ease put $\m := \m(T)$, and $\scrD:= \scrD(V)$.
As in \cite{liu-Fontaine}[\S 3], one shows that $T_\s$ induces a natural injection
$$
A_{\inf} \otimes_{\s} \m \inj T^\vee \otimes _{\Z_p} A_{\inf}
$$
that intertwines $\varphi_{A_{\inf}}\otimes \varphi_{\m}$ with $\id\otimes \varphi_{A_{\inf}}$
and $g\otimes \id$ with $g\otimes g$ for $g\in G_{\infty}$.
Writing $M:=\varphi^*\m$ for the associated filtered Breuil--Kisin module,
we deduce from Lemma \ref{lem-dual} a similar injection
\begin{equation}\label{eqn-iota}
\iota_\s: A_{\inf} \otimes_{\s} M \inj T^\vee \otimes _{\Z_p} A_{\inf}
\end{equation}
that is likewise compatible with the actions of $\varphi$ and $G_{\infty}$.
The construction of the isomorphism $\alpha_S$ of (\ref{BreuilCompatibilityIsogeny}) given in \cite{LiuT-CofBreuil}
then shows that the following diagram is commutative:
\begin{equation}
\begin{gathered}
\xymatrix{  B^+_{\cris} \otimes _{S[1/p]}\u \scrM (M)[1/p]  \ar@{=}[r] \ar[d]^\wr_{B^+_{\cris} \otimes \alpha_S }  & B^+_{\cris} \otimes _\s M \ar@{^(->}[r]^{B^+_{\cris} \otimes \iota_\s} & T^\vee \otimes_{\Z_p} B^+_{\cris}\ar@{=}[d] \\
B^+_{\cris} \otimes_{S[1/p]} \scrD \ar@{=}[r] &  B^+_{\cris} \otimes _{W(k) [1/p]} D \ar@{^(->}[r]_-{\iota_D} & V^\vee \otimes_{\Z_p} B^+_{\cris}
}
\end{gathered}\label{CD:slphaS}
\end{equation}
where $\iota_D$ is the canonical injection arising from the very definition of
$D= D_{\cris} (V^\vee)$. Via this diagram, we henceforth regard
$\m \subset M \subset \scrM = \u \scrM (M)\subset \scrD$ as submodules of $B^+_{\cris} \otimes_{W(k)[1/p]} D$, which can be regarded as a submodule of $V^\vee \otimes_{\Z_p} B^+_{\cris}$ via $\iota_D$.

For use in the following section, we close this discussion with a brief
review of {\em Breuil-Kisin-Fargues} modules, adapted from \cite[\S4.3]{SBM}.
Let $F$ denote the fraction field of $R$.
\begin{definition}\label{BKFDef}
A \emph{Breuil--Kisin--Fargues module} is a pair $(\wt{M},\varphi_{\wt{M}})$
where $\wt{M}$ is a finitely presented $A_{\inf}$-module with the property
that $\wt{M}[1/p]$ is free over $A_{\inf}[1/p]$ and
$\varphi_{\wt M} : \wt M[\frac{1}{E(u)}] \simeq \wt M[\frac {1}{\varphi(E(u))}]$
is a $\varphi_{A_{\inf}}$-semilinear isomorphism.
Morphisms of Breuil--Kisin--Fargues modules are $\varphi$-compatible
$A_{\inf}$-module homomorphisms.
\end{definition}

Functorially associated to any Breuil--Kisin--Fargues module $(\wt{M},\varphi_{\wt{M}})$
is a pair $(L,\Xi)$ given by
$$
L := (\wt M \otimes_{A_{\inf}} W(F)) ^{\varphi=1}\quad \text{and}\quad \Xi := \wt M \otimes_{A_{\inf} } B^+_{\dR} \subset L \otimes_{\Z_p} B_{\dR}.  $$
One proves (see \cite{ScholzeWeinstein} and the discussion in \cite[\S4.3]{SBM})
that $L$ is a finite free $\Z_p$-module and $\Xi$ is a
$B_{\dR}^+$-lattice inside $L\otimes_{\Z_p} B_{\dR}$,
and that the functor
$(\wt{M},\varphi_{\wt{M}})\rightsquigarrow (L,\Xi)$
is an equivalence between the category of finite free Breuil--Kisin--Fargues modules
and the category of such pairs $(L,\Xi)$.

Now let $V$ be a crystalline $G_K$-representation with Hodge--Tate weights in $\{0,\ldots, r\}$,
and let $T\subset V$ be a  $G_K$-stable $\Z_p$-lattice.  Let $M(T)$ be the filtered Breuil--Kisin
module associated to $T$ as in Theorem  \ref{thm-classical}.

\begin{corollary}\label{cor-BKF} $A_{\inf} \otimes _\s M(T)$ is the Breuil-Kisin-Fargues module corresponding to
the pair $(T^\vee , \Xi)$, where $\Xi := M(T) \otimes_\s B^+_{\dR} = D \otimes_{W(k)[1/p]} B^+_{\dR}  \subset T^\vee \otimes_{\Z_p} B_{\dR}$.
\end{corollary}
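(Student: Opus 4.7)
The plan is to verify the three assertions that comprise the corollary: first, that $\wt M := A_{\inf} \otimes_\s M(T)$ satisfies the axioms of a Breuil--Kisin--Fargues module; second, that its associated $\Z_p$-lattice $L = (\wt M \otimes_{A_{\inf}} W(F))^{\varphi=1}$ is canonically $T^\vee$; and third, that $\wt M \otimes_{A_{\inf}} B^+_{\dR}$ coincides, inside $T^\vee \otimes_{\Z_p} B_{\dR}$, with the claimed lattice $\Xi$. Under the Scholze--Weinstein equivalence (\cite{ScholzeWeinstein}, recalled after Definition \ref{BKFDef}), these three facts together identify the BKF module as stated.

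For the BKF axioms, $\wt M$ is finite free over $A_{\inf}$ because $M(T)$ is finite free over $\s$. Writing $\m = \m(T)$ for the classical Breuil--Kisin module and recalling $M = \varphi^*\m$ with $\varphi_M = \varphi\otimes\varphi_\m$, the fact that the linearization $1\otimes\varphi_\m$ of the classical Frobenius on $\m$ has cokernel killed by $E^r$ implies that the linearization of $\varphi_{\wt M}$ has cokernel killed by $\varphi(E)^r$. Hence $\varphi_{\wt M}$ becomes a $\varphi$-semilinear isomorphism $\wt M[\tfrac{1}{E}]\simeq \wt M[\tfrac{1}{\varphi(E)}]$, as required.

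To compute $L$, apply $-\otimes_{A_{\inf}} W(F)$ to the $\varphi$- and $G_\infty$-equivariant injection $\iota_\s$ of (\ref{eqn-iota}):
\begin{equation*}
  \wt M \otimes_{A_{\inf}} W(F) \xrightarrow{\iota_\s\otimes\id} T^\vee \otimes_{\Z_p} W(F).
\end{equation*}
Both sides are finite free $W(F)$-modules of the same rank. The cokernel of $\iota_\s$ is annihilated by a power of $\mu = [\varepsilon]-1$ (for $\varepsilon\in R$ a compatible system of $p$-power roots of unity), and $\mu$ becomes a unit in $W(F)$ because its reduction $\varepsilon-1\in F^{\times}$ is a unit and $W(F)$ is $p$-adically complete. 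Thus $\iota_\s\otimes\id$ is an isomorphism after base change to $W(F)$. Taking $\varphi = 1$ invariants and using $W(F)^{\varphi=1} = \Z_p$ yields $L \simeq T^\vee$ naturally in $T$.

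For the $B^+_{\dR}$-lattice $\Xi$: by definition, $\wt M \otimes_{A_{\inf}} B^+_{\dR} = M(T) \otimes_\s B^+_{\dR}$, and we only need to identify this with the target described in the corollary. This is immediate from the commutative diagram (\ref{CD:slphaS}): the filtered isomorphism $\alpha_S$ induces $M(T) \otimes_\s B^+_{\cris} \simeq D \otimes_{W(k)[1/p]} B^+_{\cris}$ as submodules of $V^\vee \otimes_{\Z_p} B^+_{\cris}$, and extending scalars along $B^+_{\cris} \hookrightarrow B^+_{\dR}$ yields the required identification of $\Xi = M(T) \otimes_\s B^+_{\dR}$ with the $B^+_{\dR}$-lattice generated by $D \otimes_{W(k)[1/p]} B^+_{\cris}$ inside $T^\vee \otimes_{\Z_p} B_{\dR}$. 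The principal obstacle in this plan is the first step in computing $L$, namely verifying that $\iota_\s$ becomes an isomorphism after base change to $W(F)$; this is the only point where one genuinely uses the crystalline hypothesis on $V$, through the structural results recalled in Lemma \ref{lem-dual} and Theorem \ref{thm-classical}.
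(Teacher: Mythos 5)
Your proposal follows essentially the same route as the paper: check the BKF axioms for $\wt M := A_{\inf}\otimes_{\s}M(T)$, show that $\iota_{\s}$ of (\ref{eqn-iota}) becomes an isomorphism after base change to $W(F)$, take $\varphi$-invariants to recover $T^{\vee}$, and read off $\Xi$ from the diagram (\ref{CD:slphaS}). The structure and all the reductions are right.

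The one point where you deviate, and where your justification is not quite what the situation supports, is the annihilator of $\coker(\iota_{\s})$. You assert that this cokernel is killed by a power of $\mu=[\varepsilon]-1$. That is the statement one expects in the Bhatt--Morrow--Scholze framework (where the \'etale comparison is an isomorphism after inverting $\mu$), but the map $\iota_{\s}$ here is constructed from the Kummer-tower Breuil--Kisin theory, and the result actually invoked by the paper is \cite{liu-Fontaine}, Thm 3.2.2: the cokernel is killed by $\varphi(\mathfrak t)^{r}$, where $\mathfrak t\in W(R)$ satisfies $\varphi(\mathfrak t)=E\mathfrak t$. You do not supply a source for the $\mu$-annihilation of this particular cokernel, and neither Lemma \ref{lem-dual} nor Theorem \ref{thm-classical} (which you point to) delivers it; deducing it from the $\varphi(\mathfrak t)^{r}$ statement would require a divisibility relation between $\mathfrak t$ and $\mu$ that you have not established. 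Fortunately the logic downstream is unaffected: both $\mathfrak t$ and $\mu$ have nonzero reduction in $F=\Frac(R)$ and hence are units in the complete discrete valuation ring $W(F)$, so either annihilation statement yields that $W(F)\otimes_{A_{\inf}}\iota_{\s}$ is an isomorphism. Replace your appeal to $\mu$ by the citation of Liu's theorem and the argument is exactly the paper's.
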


\begin{proof}
	This is essentially \cite{SBM}[Prop. 4.34], except that we use the contravariant functors
	$\u{T}_\s$   and $T_\s$ from (filtered) Breuil--Kisin modules to Galois lattices.  It is straightforward to
	translate between the version in {\em loc.~cit.} and ours, as follows:
	It is clear that $\wt M := A_{\inf} \otimes_{\s} M (T) $ is a Breuil-Kisin-Fargues module.
	By \cite{liu-Fontaine} [Thm 3.2.2], the cokernel of the map
	$\iota_\s$ in \eqref{eqn-iota} is killed by $\varphi(\mathfrak t)^r$,
	where $\mathfrak{t}$ is a certain element of $W(R)$ satisfying $\varphi(\mathfrak{t})=E\mathfrak{t}$
	(we note here that our map $\iota_\s$ is the $\varphi$-twist of the map $\hat \iota$ in \cite[3.2.1]{liu-Fontaine}). Since $\mathfrak t$ is a unit of $W(F)$, we conclude that the scalar extension $W(F) \otimes_{A_{\inf}}{\iota_\s}: W(F) \otimes _\s M \to T^\vee \otimes_{\Z_p}W(F) $ is indeed an isomorphism. Passing to $\varphi$-invariants on both sides, we
	arrive at an isomorphism
$T^\vee = (W(F) \otimes_{A_{\inf}}\wt M)^{\varphi =1}$.
\end{proof}

\section{Crystalline cohomology}\label{pdivGone}

Let $\X$ be a  smooth and proper formal scheme over $\O_K$ with (rigid analytic) generic fiber $X=\X_K$ over $K$,
and put $\X_0:=\X\times_{\O_K} \O_K/(p)$ and $\X_k : = \X \times_{\O_K} k$.
For each nonnegative integer $i$, define
\begin{equation*}
	\scrM^i:=H^i_{\cris}(\X_0/S)\ \text{and}\ \scrD^i:=\scrM^i[1/p]
\end{equation*}
which are naturally $S$ and $S[1/p]$ modules, respectively, that are each
equipped with a semilinear Frobenius endomorphism $\varphi$.

Let $V^i := H^i_{\et} (X_{\o{K}}, \Q_p)$. By \cite[Theorem 1.1]{SBM},
$V^i $ is a crystalline $G_K$-representation with Hodge-Tate weights
in $\{-i , \dots , 0\}$.
Write $D^i := D_{\cris} ( (V^i)^{\vee})$  for the filtered $(\varphi,N)$-module associated to
the dual of $V^i$; of course, $N_D=0$ as $V^i$ is crystalline.
By the $C_{\cris}$ comparison theorem  \cite[Theorem 1.1]{SBM}, we have $D^i\simeq H^i _{\cris} (\X_k/ W(k))[1/p]$, compatibly with $\varphi$-actions and with filtrations after extending scalars to $K$.
Let $\wt \scrD ^i = \scrD ((V^i) ^\vee)$ be the filtered $(\varphi, N)$-module over $S$ attached to $(V^i) ^\vee$ as above Definition \ref{Def-SDlattice}.

Consider the natural projection $q: S\onto W(k)$ defined by $q (f(u))= f(0)$ for $f(u) \in S$. This induces
a natural map $\scrM^i \to H ^i_{\cris} (\X_k/ W(k))$ which we again denote by $q$.

\begin{proposition} \label{Prop-iso-formal}
There is a unique section $s: H ^i_{\cris} (\X_k/ W(k))[1/p ] \to \scrD^i= H^i_{\cris} (\X_0/ S )[1/p]$ of $q[1/p]$ satisfying
\begin{enumerate}
\item $s$ is $\varphi$-equivariant;
\item The map $ S \otimes_{W(k)}H ^i_{\cris} (\X_k/ W(k))[1/p ] \to \scrD^i$ induced by $s$ is an isomorphism.
\label{sectionisom}
\end{enumerate}
Identifying  $H ^i_{\cris} (\X_k/ W(k))[1/p ] = D^i$ then gives a $\varphi$-equivariant isomorphism
\begin{equation}
	\scrD^i \simeq S\otimes_{W(k)} D ^i =:\wt{\scrD}^i.
	\label{DDtileq}
\end{equation}
\end{proposition}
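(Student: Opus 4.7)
The plan is to construct the section $s$ via classical Berthelot--Ogus-style trivialization of $F$-isocrystals, exploiting that $\varphi_S(u)=u^p$ makes Frobenius a ``contraction" toward the PD-quotient $q:S\twoheadrightarrow W(k)$. The starting point is the PD-base change theorem for crystalline cohomology applied to $q$, which identifies $\scrM^i\otimes_{S,q} W(k)\simeq H^i_{\cris}(\X_k/W(k))$, so after inverting $p$ the reduction modulo $u$ of $\scrD^i$ is canonically $D^i$. Combined with Berthelot's theorem that $\varphi$ is bijective on crystalline cohomology after inverting $p$, so $\varphi_{D^i}$ is an automorphism of $D^i$, this sets up the trivialization.

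For uniqueness, suppose $s_1,s_2$ are two $\varphi$-equivariant sections of $q[1/p]$. Their difference $\delta:=s_1-s_2$ satisfies $q\circ\delta=0$, hence $\delta(D^i)\subseteq \ker(q[1/p])\cdot \scrD^i=u\scrD^i[1/p]$. Frobenius-equivariance then gives
\begin{equation*}
\delta=\varphi_{\scrD}\circ\delta\circ\varphi_D^{-1},
\end{equation*}
and since $\varphi_S(u)=u^p$, iterating yields $\delta(D^i)\subseteq u^{p^n}\scrD^i[1/p]$ for every $n\ge 0$. Because $\scrM^i$ is finitely generated over $S$ and $\bigcap_n u^{p^n}S=0$ (readily checked from the explicit description of $S$ as the $p$-adic completion of the PD-envelope, or via the embedding $S\hookrightarrow A_{\cris}$), we conclude $\delta=0$.

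For existence, choose any $W(k)[1/p]$-linear map $s_0:D^i\to \scrD^i$ splitting $q[1/p]$; such an $s_0$ exists because $q[1/p]$ is surjective and $D^i$ is a finite free $W(k)[1/p]$-module. Define
\begin{equation*}
s_n := \varphi_{\scrD}^n\circ s_0\circ \varphi_D^{-n}.
\end{equation*}
The relation $q\circ\varphi_{\scrD}=\varphi_{W(k)}\circ q$ shows that each $s_n$ is again a section of $q[1/p]$. A direct computation gives
\begin{equation*}
s_{n+1}-s_n = \varphi_{\scrD}^n\circ (s_1-s_0)\circ\varphi_D^{-n},
\end{equation*}
and since $s_1-s_0$ has image in $u\scrD^i[1/p]$ (both $s_0$ and $s_1$ split $q[1/p]$), the contraction $\varphi_{\scrD}^n(u\scrD^i)\subseteq u^{p^n}\scrD^i$ puts $s_{n+1}-s_n$ into $u^{p^n}\scrD^i[1/p]$. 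Hence $\{s_n\}$ is Cauchy in the natural $(p,u)$-adic topology on $\scrD^i$; its limit $s:=\lim_n s_n$ is a section of $q[1/p]$ that is $\varphi$-equivariant by construction.

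Finally, the induced $S[1/p]$-linear map $\mu:S\otimes_{W(k)} H^i_{\cris}(\X_k/W(k))[1/p]\to \scrD^i$ reduces modulo $u$ to an isomorphism and is $\varphi$-equivariant; applying the same contraction argument to its kernel and cokernel (both finitely generated over $S[1/p]$ and annihilated after reduction mod $u$, hence contained in $u^{p^n}$-times themselves for all $n$) forces $\mu$ to be an isomorphism, establishing (\ref{sectionisom}) and the displayed isomorphism (\ref{DDtileq}). The main technical obstacle is justifying the convergence and the ``Nakayama mod $u^{p^n}$" conclusion rigorously; this rests on the finite generation of $\scrM^i$ over $S$, which follows from the general finiteness of crystalline cohomology of smooth proper schemes together with the base-change comparison to $\X_k/W(k)$.
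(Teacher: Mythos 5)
Your proposal follows the classical Berthelot--Ogus/Hyodo--Kato ``Frobenius contraction'' strategy (pick an arbitrary splitting $s_0$ of $q[1/p]$, average it under $\varphi$ using that $\varphi_S(u)=u^p$ contracts toward $u=0$), whereas the paper instead follows the method of \cite[Prop.\ 13.9]{SBM}: it introduces the rings $S_{(n)}$ with $\varphi^n:S_{(n)}\simeq S$ an isomorphism of PD-thickenings, uses that $\X\times_{\O_K}\O_{K_n}/(\pi_n^e)$ is pulled back from $\X_k$ once $p^n\ge e$, and obtains the section $s_n$ directly from crystalline base change along the \emph{flat} map $W(k)\to S_{(n)}$. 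This difference is not cosmetic, and your version has two genuine gaps. First, your existence argument begins by choosing a $W(k)[1/p]$-linear splitting $s_0$ of $q[1/p]$, asserting that $q[1/p]$ is surjective. This is not known at this stage: $q[1/p]$ factors as $\scrD^i\to \scrD^i\otimes_S W(k)[1/p]\to H^i_{\cris}(\X_k/W(k))[1/p]$, and the second (edge) map has cokernel controlled by $\Tor_1^S(H^{i+1}_{\cris}(\X_0/S),W(k))$, which has no reason to vanish since the Proposition carries no torsion-freeness hypotheses. Surjectivity of $q[1/p]$ is in effect a \emph{consequence} of the Proposition, and assuming it makes the argument circular; the paper's construction sidesteps this entirely because its base change is along a flat map and directly produces maps $s_n$ with $q\circ s_n=\varphi^n$.

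Second, your proof of statement (2) claims that ``the same contraction argument'' kills both the kernel and cokernel of $\mu:S\otimes_{W(k)}D^i\to\scrD^i$ because they die modulo $u$. The contraction argument works for the uniqueness of $s$ precisely because the source $D^i$ has \emph{bijective} Frobenius, so every element can be rewritten as $\varphi^m(y_m)$ and pushed into $\varphi^m(\ker q)\scrD^i$. For $N:=\ker(\mu)$ one only gets $N\subseteq u\cdot(S[1/p]\otimes D^i)$ together with $\varphi$-stability of $N$; since $\varphi$ is far from surjective on $N$, this cannot be iterated to $N\subseteq u^{p^n}(S[1/p]\otimes D^i)$, and the argument for injectivity collapses. (For the cokernel one also needs $\coker(\mu)\otimes_S W(k)=0$, which again requires the universal-coefficients analysis above, not merely that $\mu$ is a section of something.) The paper instead deduces bijectivity of $\mu$ from the identification $D^i\otimes_{W(k),\varphi^n}S\simeq \scrD^i\otimes_{S,\varphi^n}S$ of \eqref{eqn-isocrys} and the bijectivity of $\varphi^n\otimes\varphi^n$, with no Nakayama-type step. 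The remaining technical points you flag (finiteness and $p$-adic completeness of $\scrM^i$ over the non-noetherian ring $S$, needed for your convergence and separatedness claims) are real but secondary to these two issues.
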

\begin{remark} The Proposition follows from \cite[Prop. (1.13.1)]{Beilin} (we are grateful to Koshikawa for pointing this out to us). The Proposition is also known when $\X$ is a  smooth proper {\em scheme} thanks to \cite[Lemma 5.2]{Hyodo-Kato} ({\em cf.} \cite[Prop.~4.4.6]{Tsuji}). For the sake of completeness, we include the
following argument here, which was suggested to us by Y.~Tian.
% and proceeds along the
%lines of the proof of \cite[Prop. 13.9]{SBM}.
\end{remark}

\begin{proof}[Proof of Proposition $\ref{Prop-iso-formal}$]
The following  is a variant of the proof of \cite[Prop. 13.9]{SBM} obtained by replacing $A_{\cris}$ with $S$
and making some necessary modifications to the argument.
Let $S_{(n)}$ be the  $p$-adic completion of the PD-envelope of $W(k)[u_n]$ with respect of $(E(u_n) )$; note that $S_{(n)}$ and $S_n$ are different if $n>0$.  There is an evident inclusion $S\hookrightarrow S_{(n)}$,
and the Frobenius on $S$ uniquely extends to $\varphi:S_{(n)}\rightarrow S_{(n)}$.
Moreover, the self-map $\varphi^{n}$ of $S_{(n)}$ induces a $W(k)$-semilinear isomorphism
$\varphi^{n}:S_{(n)}\simeq S$.
Consider the projection $S_{(n)} \onto \O_{K_n}/ (\pi^e _n)$ given by $u_n \to \pi_n$.
This is a PD-thickening, and the isomorphism $\varphi^n: S_{(n )} \simeq S$ is compatible with the isomorphism
$\varphi^n : \O_{K_n}/ (\pi_n ^e) \simeq \O_{K}/(\pi^e) = \O_K / (p)$ sending $x$ to $x^{p^n}$,
so $\varphi^n:S_{(n)}\simeq S$ is a morphism of divided power thickenings.
Writing $\X_{(n)} : = \X \times_{\O_K} \O_{K_n}/(\pi_n^e)$, we thus have the following isomorphisms by base change
for crystalline cohomology:
\begin{equation}
	H^i _{\cris}(\X_{(n)}/ S_{(n)}) \otimes_{S_{(n)}, \varphi ^n} S \simeq H^i _{\cris} ( \X_{(n)} \times_{\O_{K_n}/ (\pi_n^e), \varphi^n} \O_K/ (p)/ S) \simeq H^i _{\cris} (\X _0/ S) \otimes_{S , \varphi^n } S.
	\label{BC1}
\end{equation}	
On the other hand, if $n$ is large enough (any $n$ with $p^n\ge e$ will do), we
have $\X_{(n)}\simeq \X_k \times_k \O_{K_n}/ (\pi_n^e)$ because the canonical map $\O_K\rightarrow \O_{K_n}/(\pi_n^e)$
factors through $k=\O_K/(\pi)$.  For such $n$, the inclusion $W(k)\hookrightarrow S_{(n)}$
is then a PD-morphism over $k\rightarrow \O_{K_n}/(\pi_n^e)$ so by base change and (\ref{BC1})
we find
\begin{equation}\label{eqn-isocrys}
 H^i_{\cris} (\X_k/ W(k)) \otimes_{W(k), \varphi^n} S \simeq H^i _{\cris}(\X_{(n)}/ S_{(n)}) \otimes_{S_{(n)}, \varphi ^n} S \simeq H^i _{\cris} (\X _0/ S) \otimes_{S , \varphi^n } S.
\end{equation}
Composing (\ref{eqn-isocrys}) with the map $\varphi^n\otimes 1 : H^i _{\cris} (\X _0/ S) \otimes_{S , \varphi^n } S  \to H^i _{\cris} (\X _0/ S)$, we obtain a map $s_n : H^i_{\cris} (\X_k/ W(k))  \to H^i_{\cris} (\X/S) $ that is $\varphi$-equivariant  and has the property that $q\circ s_n$ is simply
$\varphi ^n : H ^ i_{\cris} (\X_k / W(k)) \to H^i _{\cris} (\X_k/ W(k) ). $  Since $\varphi^n : H ^ i_{\cris} (\X_k / W(k))[1/p] \to H^i _{\cris} (\X_k/ W(k) ) [1/p]$ is bijective, we may finally define
$s:= s_n[1/p] \circ \varphi^{-n}: H_{\cris} ^i (\X_k / W(k)) [1/p] \to \scrD^i $, which by construction
is a $\varphi$-equivariant section of $q[1/p]$.

To show that the map $s(D^i)\otimes_{W(k)} S \to \scrD^i$ is bijective as claimed in (\ref{sectionisom}),
it suffices to show that $\varphi ^n \otimes 1 : \scrD ^i \otimes_{S, \varphi ^n} S \to \scrD^i$ is bijective. Since the identification $D \otimes_{W(k), \varphi^n } S \simeq \scrD^i \otimes_{S, \varphi^n }S$ of \eqref{eqn-isocrys} is compatible with $\varphi$,  and the map $\varphi^n \otimes \varphi ^n : D \otimes_{W(k), \varphi ^n} S \to D \otimes_{W(k), \varphi ^n} \varphi ^n(S) $ is bijective, it follows that
\begin{equation}
	\xymatrix{\varphi^n \otimes \varphi^n: \scrD ^ i \otimes _{S, \varphi ^n} S  \ar[r]^-{ \varphi ^ n \otimes 1 } & \scrD ^i \otimes_{S} S \ar[r] ^-{1 \otimes \varphi^n} &   \scrD ^i \otimes_{S, \varphi^n} \varphi ^n (S)}
	\label{phitensphi}
\end{equation} is a bijection as well. It is obvious that the second map in (\ref{phitensphi}) is bijective, because $\varphi: S \to \varphi(S)$ is an ring isomorphism. We conclude that $\varphi ^n \otimes 1 : \scrD ^i \otimes_{S, \varphi ^n} S \to \scrD^i$ is bijective, as desired.

That $s$ is unique is standard: if there exists another $\varphi$-equivariant section $s'$, then for any $x \in D^i$ we have $(s-s') (x) \in \Ker(q) \scrD ^i $. As $\varphi: D^i \to D^i$ is a bijection,
for all $m>0$
we may then write $x = \varphi ^m (y_m)$ for suitable $y_m$. But
then $$(s-s') (x)= (s-s') (\varphi ^m (y_m ) )= \varphi ^m ((s-s')(y_m)) \in  \varphi ^m(\Ker (q)) \scrD^i,$$
and this forces $(s-s') (x)= 0$.
\end{proof}

Using Proposition \ref{Prop-iso-formal},
we henceforth identify $\scrD^i $ with $\wt \scrD^i$ via (\ref{DDtileq}).
If we further assume that $\scrM^i$ is torsion free,
we may view $\scrM^i$ as an $S$-submodule of $\wt{\scrD}^i$, and
we then {\em define} $\Fil ^ i \scrM ^i : = \Fil ^i \wt \scrD^i \cap \scrM^i.$

\begin{remark}\label{rmk-1}
Note that the filtration $\Fil ^ i (\scrM ^i[1/p])$ is defined by Breuil's construction \eqref{eqn-fil}
using the Hodge filtration on de Rham cohomology and
the comparison isomorphisms
$$
H^i _{\cris} (\X_0/S) [1/p] \simeq S[1/p] \otimes_{W(k)} H^i_{\cris} (\X_k /W(k))
\quad\text{and}\quad
K \otimes_{W(k)}H^i_{\cris} (\X_k /W(k)) \simeq H^i_{\text{dR}} (X/K).
$$
It is natural to ask for a more direct definition of $\Fil ^ i (\scrM ^i[1/p])$ or even $\Fil^i \scrM ^i $
via the crystalline cohomology over $S$ of certain sheaves on the crystalline site of $\X_0$.
Such an interpretation will be explained in \S \ref{subsec-1}.
\end{remark}

Put $T^i:= H^i_{\et } (X_{\bar K}, \Z_p)/ \tor $; it is a $G_K$-stable $\Z_p$-lattice inside
the crystalline representation $V^i$ so has an associated filtered Breuil--Kisin
module $M^i:=M((T^i)^{\vee})$ via Theorem \ref{thm-classical} (\ref{KisinExists}).
Then Theorem \ref{thm-classical} (\ref{BreuilCompatibilityIsogeny})
provides
an isomorphism of $S[1/p]$-modules
$$ \alpha_S: \underline \scrM (M^i)[1/p] \simeq \wt \scrD^i $$
that is compatible  $\varphi$ and $\Fil^i$ (we forget the $N$-structure here).
Since we have identified $\wt \scrD^i $ with $\scrD^i $ in the above,  we arrive an isomorphism of $S[1/p]$-modules
\begin{equation}\label{eqn-iso}
\iota:  \underline \scrM (M ^i)[1/p] \simeq  \scrD^i = \scrM^i  [1/p]\simeq S\otimes_{W(k)} D^i
 \end{equation}
  compatible with $\varphi$ and $\Fil ^i$.
We reiterate that, as explained below (\ref{CD:slphaS}),
  we will  regard  all modules  involved in this discussion as submodules of
  $B^+_{\cris} \otimes_{W(k)[1/p]} D^i$ via $\iota$.

Our aim is to prove:

\begin{theorem}\label{thm-main-crystokisin}
Let $i$ be a nonnegative integer with $i <p-1$,
and assume that $H^i_{\cris} (\X_k/W(k))$ and $H^{i+1}_{\cris} (\X_k /W(k))$
are torsion free.  Then the following hold:
\begin{enumerate}
\item $T^i:=H^i_{\et} (X_{\o{K}}, \Z_p)$ is also torsion-free.\label{MTtorfree}
\item $\scrM^i:=H^i_{\cris}(\X_0/S)$ is a strongly divisible $S$-lattice in $\scrD ^i$ and
$T_{\st} (\scrM^i) \simeq T^i$.\label{MTSD}
\item  There is a natural isomorphism of Breuil--Kisin modules $\underline M (\scrM^i)\simeq M((T^i)^{\vee})$.
\label{MTBK}
\end{enumerate}
\end{theorem}

\begin{remark}\label{rmk-pre-results}
Assertions (\ref{MTtorfree}) and (\ref{MTSD})
of Theorem \ref{thm-main-crystokisin} have a long history, with many partial results.
When $K=K_0$ these facts were proved in \cite{FontaineMessing}.
Under the assumption $ei< p-1$, they follow from
\cite{CarusoInvent}, which proves a stronger comparison isomorphism at the level of
torsion objects; in these cases one can drop the assumption that
$H^j_{\cris} (\X_k/W(k))$
is torsion free for $j=i,i+1$ by replacing $T^i$ and $\scrM ^i$ with $T^i/\tor$ and $\scrM^i /\tor$, respectively,
and Theorem \ref{thm-main-crystokisin} (including assertion (\ref{MTBK})) still holds.
For arbitrary $e$, if one assume that $H^j_{\cris} (\X_k/W(k))$ is torsion free for \emph{all} $j$,
the work of Faltings \cite{Faltings} implies that (\ref{MTtorfree}), (\ref{MTSD}), and hence
(\ref{MTBK}) hold (though his terminology is slightly different from ours).
\end{remark}

To prove Theorem \ref{thm-main-crystokisin}, we must first recall \cite[Thm 1.8]{SBM}, which provides
a perfect complex $\RG_{A_{\inf}}(\X)$ of $A_{\inf}$-modules with a $\varphi$-linear map
$\varphi:\RG_{A_{\inf}}(\X)\rightarrow \RG_{A_{\inf}}(\X)$ such that
\begin{enumerate}

\item $\wt M^i := H^i (\RG_{A_{\inf}}(\X))$ is a Breuil--Kisin--Fargues module.
\label{BKFExists}
\item $H^i(\RG_{A_{\inf}}(\X) \otimes^{\mathbb L}_{A_{\inf}} A_{\cris}) \simeq
H^i _{\cris}(\X_{\O_{\o{K}}/(p)}/A_{\cris})\simeq A_{\cris} \otimes_{S}\scrM ^i$ as $\varphi$-modules over $A_{\cris}$.
\label{DerivedCrys}
\item $H^i(\RG_{A_{\inf}}(\X) \otimes ^{\mathbb L}_{A_{\inf}} W(\bar k))\simeq H^i _{\cris} (\X_{\o{k}}/W(\bar k))$
as $\varphi$-modules over $W(\bar k)$.
\label{DerivedW}
\item $H^i(\RG_{A_{\inf}}(\X) \otimes_{A_{\inf}}  W(F))\simeq H^i _{\et } (X_{\o{K}}, \Z_p)\otimes_{\Z_p} W(F)$
\label{Comparison:etale}
 \end{enumerate}
We advise the reader that (\ref{Comparison:etale}) is slightly different
from the comparison isomorphism found in \cite{SBM}, where the period
ring $A_{\inf}[1/\mu]$ is used in place of $W(F)$.
Here $\mu = [\varepsilon]-1$ for $\varepsilon = (\zeta_{p^n})_{n \geq 0} \in R$ with $\{\zeta_{p^n}\}$
a compatible system of primitive $p^n$-th root of unity.
However, it is not difficult to see that
$W(F)$ is flat over $A_{\inf}[1/\mu]$
({\em cf.} the proof of Lemma \ref{lem-prepare}) and then
(\ref{Comparison:etale}) follows easily from the comparison isomorphism found in \cite[Theorem 1.8]{SBM}.

From these facts we deduce a natural map of $\varphi$-modules over $A_{\cris}$:
\begin{equation}
	\tilde \iota   :  A_{\cris} \otimes_{A_{\inf}} \wt  M ^i \to H ^i _{\cris} (\X_{\O_{\o{K}}/(p)}/ A_{\cris})
	\label{map:tildeiota}
\end{equation}

 \begin{lemma}\label{lem-compatible[1/p]} There is a natural, $\varphi$-compatible
 isomorphism of $A_{\inf}$-modules
 $$\alpha: A_{\inf} \otimes_{ \s} M^i[1/p] \simeq \wt M ^i [1/p] $$
 with the property that the following diagram commutes:
 	\begin{equation}
	 \begin{gathered}
		\xymatrix{
		B^+_{\cris} \otimes_ { A_{{\inf}} }\wt M ^i \ar[r]^-{\tilde \iota[1/p]} & H^i_{\cris}(\X_{\O_{\o{K}}/(p)}/A_{\cris})[1/p]  \\
		B^+_{\cris} \otimes_{ \s}M ^i \ar[r]^-{\sim}_-{B^+_{\cris} \otimes \iota }  \ar[u]^{B^+_{\cris}\otimes\alpha }&  B^+_{\cris} \otimes_{S}  \scrM ^i\ar[u]^\wr}
		\label{Lemma:commdiag}
		\end{gathered}
	\end{equation}
In particular,  $\wt \iota [1/p]$ is an isomorphism.
   \end{lemma}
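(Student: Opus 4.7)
My plan is to realize $\alpha$ via the equivalence of categories between finite free Breuil--Kisin--Fargues modules and pairs $(L,\Xi)$ recalled just above Corollary \ref{cor-BKF} (applied after inverting $p$). First I would observe that both $A_{\inf}\otimes_{\s} M^i$ and $\wt M^i$ are Breuil--Kisin--Fargues modules: the former by Corollary \ref{cor-BKF}, and the latter by property (\ref{BKFExists}) of the complex $\RG_{A_{\inf}}(\X)$ recalled from \cite{SBM}. The strategy is then to show that after inverting $p$ they correspond to the same pair $(L,\Xi)$.

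Next I would compute the associated pairs. Recalling that by definition $M^i = M((T^i)^\vee)$, Corollary \ref{cor-BKF} applied with $T$ replaced by $(T^i)^\vee$ identifies the pair attached to $A_{\inf}\otimes_\s M^i$ as $\bigl(T^i,\ D^i\otimes_{W(k)[1/p]} B_{\dR}^+\bigr)$. For $\wt M^i$, the $\Z_p$-lattice part $(\wt M^i\otimes_{A_{\inf}} W(F))^{\varphi=1}$ is identified with $T^i$ via the \'etale comparison (\ref{Comparison:etale}), after taking $\varphi$-invariants on both sides and using $W(F)^{\varphi=1} = \Z_p$. The $B_{\dR}^+$-lattice $\wt M^i\otimes_{A_{\inf}} B_{\dR}^+$ is identified with $\scrM^i\otimes_S B_{\dR}^+$ by extending the crystalline comparison (\ref{DerivedCrys}) from $A_{\cris}$ to $B_{\dR}^+$, and this lattice is in turn identified with $D^i\otimes_{W(k)[1/p]} B_{\dR}^+$ via Proposition \ref{Prop-iso-formal}.

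Since both pairs coincide, the categorical equivalence yields the desired $\varphi$-compatible $A_{\inf}[1/p]$-linear isomorphism $\alpha$. For the commutativity of (\ref{Lemma:commdiag}), both composites from $B_{\cris}^+\otimes_\s M^i$ to $H^i_{\cris}(\X_{\O_{\o K}/(p)}/A_{\cris})[1/p]$ are built from the canonical identifications underlying the construction of $\alpha$---namely $\iota$ of (\ref{eqn-iso}) on the bottom row and the base change of (\ref{DerivedCrys}) on the right column---so commutativity should reduce to a formal check. The final assertion that $\tilde\iota[1/p]$ is an isomorphism is then immediate, as the other three sides of the square are isomorphisms.

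The hard part will be ensuring the pair attached to $\wt M^i$ is canonically, not just abstractly, identified with $(T^i, D^i\otimes B_{\dR}^+)$ in a fashion compatible with the construction of $\alpha$, so that (\ref{Lemma:commdiag}) commutes on the nose. This requires careful bookkeeping: the comparison (\ref{Comparison:etale}) is phrased over $W(F)$ rather than BMS's $A_{\inf}[1/\mu]$ and may demand a supplementary flatness argument, and the $B_{\dR}^+$-lattice coming from (\ref{DerivedCrys}) must be matched with $\iota$ from (\ref{eqn-iso}), which in turn was built from $\alpha_S$ of Theorem \ref{thm-classical}(\ref{BreuilCompatibilityIsogeny}). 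Once this canonical matching is in place, the rest of the argument is essentially formal.
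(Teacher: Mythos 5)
Your overall strategy---classify both $A_{\inf}\otimes_{\s}M^i$ and (the relevant piece of) $\wt{M}^i$ by their associated pairs $(L,\Xi)$, match the pairs, and read off the diagram---is in fact the paper's strategy, carried out there by regarding everything as submodules of $B^+_{\cris}\otimes_{W(k)[1/p]}D^i$. But there are two genuine gaps in your execution. First, the equivalence with pairs $(L,\Xi)$ is an equivalence on \emph{finite free} Breuil--Kisin--Fargues modules, and the lemma carries no torsion-freeness hypotheses: $\wt{M}^i$ need not be free and $H^i_{\et}(X_{\o{K}},\Z_p)$ may have torsion, so $(\wt{M}^i\otimes_{A_{\inf}}W(F))^{\varphi=1}$ is the full \'etale cohomology, not $T^i$. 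One must first invoke the structure theorem (\ref{BKstructure}) to extract the free part $\wt{M}^i_{\free}$ (which has the same $p$-inverted module), and the identification $(\wt{M}^i_{\free}\otimes_{A_{\inf}}W(F))^{\varphi=1}=T^i=H^i_{\et}/\tors$ then requires the exactness and flatness arguments over $W(F)$ (Lemma \ref{lem-prepare}, faithful flatness of $\Z_p\to W(F)$) that the paper spells out. Saying ``after inverting $p$'' does not repair this: the classification as recalled pins down an integral lattice $L$, and no $p$-isogeny version of it is available in the paper, so you really do need to match the pairs on the nose for the free parts.

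Second, your computation of $\Xi$ for $\wt{M}^i$ by ``extending the crystalline comparison (\ref{DerivedCrys}) from $A_{\cris}$ to $B^+_{\dR}$'' presupposes that the \emph{derived} comparison $H^i(\RG_{A_{\inf}}(\X)\otimes^{\mathbb{L}}_{A_{\inf}}A_{\cris})\simeq A_{\cris}\otimes_S\scrM^i$ can be converted into a statement about the \emph{underived} tensor $\wt{M}^i\otimes_{A_{\inf}}(-)$---but that conversion is precisely the content of $\tilde\iota[1/p]$ being an isomorphism, which you propose to deduce only at the very end. The paper breaks this circle at the outset: because $\wt{M}^j[1/p]$ is finite free over $A_{\inf}[1/p]$ for \emph{every} $j$, the derived comparisons (\ref{DerivedCrys}) and (\ref{DerivedW}) become underived after inverting $p$, so $\tilde\iota[1/p]$ is an isomorphism from the start; the remaining work is then to check that the resulting identifications---pinned down by the \emph{unique} $\varphi$-equivariant sections of Proposition \ref{Prop-iso-formal} and its $A_{\cris}$-analogue---make (\ref{Lemma:commdiag}) commute and force $A_{\inf}\otimes_{\s}M^i[1/p]=\wt{M}^i[1/p]$ inside $B^+_{\cris}\otimes_{W(k)[1/p]}D^i$. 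In short, the ``in particular'' of the lemma is really the first step of the proof rather than a consequence of the diagram, and you should establish it directly to avoid circularity in the construction of $\Xi$ and of $\alpha$ itself.
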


 \begin{proof}
 This follows from the main result of \cite{SBM} cited above.
 Indeed, by Definition \ref{BKFDef} and (\ref{BKFExists}) above,
 each $\wt{M}^i[1/p]$ is a finite and {\em free} $A_{\inf}[1/p]$-module,
so the derived comparison isomorphisms (\ref{DerivedCrys}) and (\ref{DerivedW})
yield comparison isomorphisms on the individual cohomology groups with $p$-inverted.
In particular, the natural induced maps
 $$
 \xymatrix{
 	{\tilde \iota[1/p]:   A_{\cris} \otimes_{A_{\inf}} \wt  M ^i[1/p]} \ar[r] &
	{H ^i _{\cris} (\X_{\O_{\o{K}}/(p)}/ A_{\cris})[1/p]}
	}
$$
and
$$
 \xymatrix{
 	{\wt M^i [1/p] \otimes_{A_{\inf}} W(\bar k)}\ar[r] & {H^i_{\cris} (\X_{\o{k}}/ W(\bar k))[1/p] = W(\bar k) \otimes_{ W(k)}  D^i}
}	
$$
are $\varphi$-compatible isomorphisms.
Furthermore, as explained following \cite[Thm. 1.8]{SBM}, these mappings
are compatible with the canonical projection
$q: \wt M^i [1/p] \to  \wt M^i [1/p] \otimes_{A_{\inf}} W(\bar k)$
and the projection $ q' :  H ^i _{\cris} (\X_{\O_{\o{K}}/(p)}/ A_{\cris})[1/p] \to H^i_{\cris} (\X_{\o{k}}/ W(\bar k))[1/p] $
arising via the compatibility of crystalline cohomology with PD-base change
in the sense that the obvious diagram commutes.
It follows that $\tilde \iota$ induces a $\varphi$-compatible  isomorphism
$$
\xymatrix{
	A_{\cris} \otimes_{A_{\inf}} \wt  M ^i[1/p] \ar[r]_-{\wt{\iota}[1/p]}^-{\sim} &
	H ^i _{\cris} (\X_{\O_{\o{K}}/(p)}/ A_{\cris})[1/p] &
	B^+_{\cris} \otimes_{W(k)[1/p]} \bar D^i  \ar[l]^-{\beta}_-{\sim}
	}
$$
where $\bar D^i := H^i_{\cris} (\X_{\o{k}}/ W(\bar k))[1/p]$ and the isomorphism $\beta$ is induced by constructing a
$\varphi$-equivariant section $s: \bar D^i  \inj H ^i _{\cris} (\X_{\O_{\o{K}}/(p)}/ A_{\cris})[1/p]$
to the projection map $q'$; we note that such a section $s$ exists and is necessarily unique,
whence $\beta$ is unique as well (see \cite[Prop. 13.9]{SBM} and Proposition \ref{Prop-iso-formal}).
Using the isomorphisms $\tilde \iota[1/p]$ and $\beta$,  we then regard $\wt M^i [1/p] $ and
$H^i_{\cris} (\X_{\O_{\o{K}}/(p)}/ A_{\cris})$ as submodules of $B^+_{\cris} \otimes_{W(k)[1/p]} D^i$.
As explained in the beginning of this section, we use the map $\iota$ of \eqref{eqn-iso} to regard both
$M^i$ and $\scrD ^i = H^i _{\cris}( \X_0/ S)[1/p]$ as submodules of $B^+_{\cris} \otimes_{W(k)[1/p]} D^i$.
Thus, working entirely inside $B^+_{\cris} \otimes_{W(k)[1/p]} D^i$, it now suffices to prove
that $ A_{\inf} \otimes_{\s}M^i[1/p] = \wt M^i[1/p]$.
By \cite{SBM}[Prop. 4.13], there exists an exact sequence of $A_{\inf}$-modules
\begin{equation}
	\xymatrix{
		0 \ar[r] & {\wt M ^ i_{\tor}} \ar[r] & {\wt M^i} \ar[r] & {\wt M^i_{\free}}\ar[r] & {\bar M^i} \ar[r] & 0.
		}\label{BKstructure}
\end{equation}
where $\wt M^i_{\tor}$ is killed by power of $p$, the term $\wt M^i _{\free}$ free of finite rank over $A_{\inf}$,
and $\bar M^i$ is killed by some power of the ideal $(u,p)$.
We claim that $\wt M^i_{\free}$ is the Breuil--Kisin--Fargues module corresponding
to the pair $(T^i,\Xi)$, with $\Xi := B^+_{\dR} \otimes_{A_{\inf}}\wt M^i_{\free} \subset T^i \otimes_{\Z_p}  B_{\dR}. $
To see this, we apply $\otimes_{A_{\inf}} W(F)$ to the exact sequence (\ref{BKstructure})
and, using Lemma \ref{lem-prepare} \eqref{isflat} below with the fact that $\o{M}^i$
is killed by a power of $u\in W(F)^{\times}$, we deduce an exact sequence of $W(F)$-modules
\begin{equation}
	\xymatrix{
		0 \ar[r] & {\wt{M}_{\tor}^i \otimes_{A_{\inf}}W(F)}  \ar[r] & {\wt{M}^i \otimes_{A_{\inf}}W(F)} \ar[r] &
		{\wt{M}_{\free}^i \otimes_{A_{\inf}}W(F)} \ar[r] & 0
	}.\label{ExSeq:WF}
\end{equation}
Writing $T_{\star}:=(\wt{M}_{\star}^i \otimes_{A_{\inf}}W(F))^{\varphi=1}$ for $\star\in \{\tor,\emptyset,\free\}$,
Lemma 4.26 of \cite{SBM} gives that $T_{\star}$ is a finite-type $\Z_p$-module and canonically identifies (\ref{ExSeq:WF}) with the exact sequence
\begin{equation}
	\xymatrix{
		0 \ar[r] & {T_{\tor} \otimes_{A_{\inf}}W(F)}  \ar[r] & {T \otimes_{A_{\inf}}W(F)} \ar[r] &
		{T_{\free} \otimes_{A_{\inf}}W(F)} \ar[r] & 0
	}.\label{ExSeq:WF2}
\end{equation}
From the very definition of $T_{\star}$, we have a sequence of $\Z_p$-modules
\begin{equation}
	\xymatrix{
		0 \ar[r] & T_{\tor} \ar[r] & T \ar[r] & T_{\free} \ar[r] & 0
	},\label{Seq:T}
\end{equation}
whose scalar extension to $W(F)$ is the exact sequence (\ref{ExSeq:WF2}).  Thus, since $W(F)$
is faithfully flat over $\Z_p$ \cite[Tag 0539]{stacks-project}, we conclude that (\ref{Seq:T}) is {\em exact}.
Now since $\wt{M}_{\free}^i$ if finite free over $A_{\inf}$, it is clear that $T_{\free}$
is free over $\Z_p$, and we have $T\simeq H^i_{\et}(X_{\o{X}},\Z_p)$ thanks to
the comparison isomorphism $H^i(\RG_{A_{\inf}}(\X) \otimes_{A_{\inf}}  W(F))\simeq H^i _{\et } (X_{\o{K}}, \Z_p)\otimes_{\Z_p} W(F)$ of \cite[Theorem 1.8]{SBM} recorded above and the fact that $A_{\inf}\rightarrow W(F)$
is flat, recorded in Lemma \ref{lem-prepare} \eqref{isflat} below.
It follows at last that we have
$(\wt{M}^i_{\free}\otimes_{A_{\inf}}W(F))^{\varphi=1}=: T = H^i_{\et}(X_{\o{K}},\Z_p)/\tors =: T^i$, which gives our claim.
Now since we clearly have $\wt M^i _{\free}[1/p] = \wt M^i [1/p]$, we may rewrite $\Xi = B^+_{\dR} \otimes_{A_{\inf}}\wt M^i_{\free} = B^+_{\dR} \otimes_{W(k)[1/p]} D^i$.  Since $M^i = M((T^i) ^\vee)$, Corollary \ref{cor-BKF} then shows that $ A_{\inf} \otimes_{\s} M^i$ is the Breuil--Kisin--Fargues module corresponding to $(T^i , \Xi)$.
This yields the desired identification
$A_{\inf} \otimes_{\s} M^i= \wt M^i _{\free}$ inside  $B^+_{\cris} \otimes_{W(k)[1/p]} D^i$.
\end{proof}

To proceed further, we will need:

 \begin{proposition}\label{prop-isotiliota} Suppose that $\wt M ^{i+1}$ is $u$-torsion free.  Then
 $(\ref{map:tildeiota})$ is an isomorphism.
 \end{proposition}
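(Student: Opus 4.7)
The plan is to realize $\tilde\iota$ as the edge map of the hyperTor spectral sequence for the base change of the perfect complex $\RG_{A_{\inf}}(\X)$ along $A_{\inf}\to A_{\cris}$, and thereby reduce the claim to a single Tor-vanishing that the $u$-torsion-free hypothesis on $\wt M^{i+1}$ is designed to supply. Concretely, since $\RG_{A_{\inf}}(\X)$ is a perfect complex with cohomology groups $\wt M^q$, there is a convergent spectral sequence
\begin{equation*}
E_2^{p,q} = \Tor_{-p}^{A_{\inf}}(A_{\cris},\wt M^q) \Longrightarrow H^{p+q}\bigl(\RG_{A_{\inf}}(\X)\otimes^{\mathbb L}_{A_{\inf}} A_{\cris}\bigr),
\end{equation*}
whose abutment is canonically identified with $H^{p+q}_{\cris}(\X_{\O_{\bar K}/(p)}/A_{\cris}) \simeq A_{\cris}\otimes_S \scrM^{p+q}$ via the derived crystalline comparison of \cite[Thm.~1.8]{SBM} recalled above. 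Unwinding definitions, the edge map $A_{\cris}\otimes_{A_{\inf}} \wt M^i = E_2^{0,i} \to H^i$ coincides with $\tilde\iota$.

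Granting that $A_{\cris}$ has Tor-dimension at most $1$ over $A_{\inf}$---which one expects from the presentation of $A_{\cris}$ as the $p$-adic completion of the PD-envelope of $A_{\inf}$ along the principal ideal $\ker(\theta_{\inf})=(E)$---the spectral sequence collapses to a short exact universal-coefficient sequence
\begin{equation*}
0 \to A_{\cris}\otimes_{A_{\inf}} \wt M^i \xrightarrow{\tilde\iota} H^i_{\cris}(\X_{\O_{\bar K}/(p)}/A_{\cris}) \to \Tor_1^{A_{\inf}}(A_{\cris},\wt M^{i+1}) \to 0.
\end{equation*}
Injectivity of $\tilde\iota$ is then automatic, and the proposition reduces to the Tor-vanishing $\Tor_1^{A_{\inf}}(A_{\cris},\wt M^{i+1})=0$. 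To attack this, I would invoke the Breuil--Kisin--Fargues structural filtration $0 \to \wt M^{i+1}_{\tor} \to \wt M^{i+1} \to \wt M^{i+1}_{\free} \to \bar M^{i+1} \to 0$ of \cite[Prop.~4.13]{SBM} already used in Lemma \ref{lem-compatible[1/p]}. The piece $\wt M^{i+1}_{\free}$ is finite free and hence flat, while $\bar M^{i+1}$ is $(p,u)$-power torsion, supported at the closed point of $\Spec A_{\inf}$. The hypothesis that $\wt M^{i+1}$ is $u$-torsion free propagates to the submodule $\wt M^{i+1}_{\tor}$, and a diagram chase on the long exact Tor sequences attached to $0\to \wt M^{i+1}_{\tor}\to \wt M^{i+1}\to N\to 0$ and $0\to N\to \wt M^{i+1}_{\free}\to \bar M^{i+1}\to 0$ (with $N$ the image in $\wt M^{i+1}_{\free}$) should reduce the vanishing to Tor computations with the ``exotic'' pieces $\wt M^{i+1}_{\tor}$ (simultaneously $p^n$-torsion and $u$-torsion free, hence severely constrained) and $\bar M^{i+1}$.

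The hard part will be the Tor-vanishing itself. While the universal-coefficient reduction is formal once the Tor-dimension bound on $A_{\cris}$ is in hand, computing $\Tor_1^{A_{\inf}}(A_{\cris},M)$ for a general finitely presented $A_{\inf}$-module $M$ is subtle since $A_{\cris}$ is very far from flat over $A_{\inf}$; the whole point of the $u$-torsion-free hypothesis is to rule out precisely those pieces of $\wt M^{i+1}$ that would contribute a nonzero $\Tor_1$. I expect the actual computation to proceed by reducing modulo powers of $p$ (where $A_{\cris}/p^n$ is amenable to direct analysis in the style of \cite[\S 4]{SBM}) and to require delicate bookkeeping with distinguished generators of $\ker(\theta_{\inf})$. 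As a consistency check, since $\tilde\iota[1/p]$ is already known to be an isomorphism by Lemma \ref{lem-compatible[1/p]}, the putative cokernel in the universal-coefficient sequence is necessarily $p$-power torsion, making the strategy internally coherent with the $u$-torsion-free hypothesis on $\wt M^{i+1}$.
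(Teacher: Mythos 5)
Your reduction of the proposition to the single vanishing $\Tor_1^{A_{\inf}}(A_{\cris},\wt{M}^{i+1})=0$ is internally coherent and closely parallels the alternative argument recorded in the remark following the proposition (suggested by Bhatt), where the two-column degeneration is justified by a $p$-adic completion argument resting on the fact that $A_{\cris}/(p^n)$ has Tor-dimension $1$ over $A_{\inf}/(p^n)$. But in that remark the hypothesis is that $\wt{M}^{i+1}$ is \emph{free} over $A_{\inf}$, so the $\Tor_1$ vanishes for trivial reasons. Under the weaker hypothesis that $\wt{M}^{i+1}$ is merely $u$-torsion free, the vanishing of $\Tor_1^{A_{\inf}}(A_{\cris},\wt{M}^{i+1})$ \emph{is} the content of the proposition, and you do not prove it: you sketch a d\'evissage through the four-term filtration of \cite[Prop.~4.13]{SBM} and explicitly defer the resulting computation (``the hard part will be the Tor-vanishing itself''). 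The deferred pieces are genuinely delicate: $A_{\cris}$ is not finitely presented over $A_{\inf}$; the Tor-dimension bound you ``expect'' is established in the paper only for modules with bounded $p$-power torsion, via a derived limit argument rather than a resolution of $A_{\cris}$ itself; and a finitely presented, $p$-power-torsion, $u$-torsion-free $A_{\inf}$-module (your $\wt{M}^{i+1}_{\tor}$) is not obviously of finite Tor-dimension. So the proposal is a correct formal reduction followed by a gap at exactly the point where the hypothesis must do work.

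The paper's actual proof sidesteps Tor altogether. It picks a bounded representative $C_{\bullet}$ by finite free $A_{\inf}$-modules and proves, via Lemma \ref{lem-injection} (whose proof uses flatness of $W(F)$ over $A_{\inf}$ and the B\'ezout property of $R$), that $\ker(d_i)$, $\ker(d_{i+1})$ and---this is precisely where $u$-torsion-freeness of $\wt{M}^{i+1}$ enters---$\im(d_i)$ are finite \emph{free} over $A_{\inf}$; injectivity of the relevant inclusions after $\otimes_{A_{\inf}}A_{\cris}$ then follows because both rings embed in $B_{\dR}^+$, and a diagram chase finishes. If you want to complete your route, the missing ingredient is exactly this freeness statement: the sequence $0\to\im(d_i)\to\ker(d_{i+1})\to\wt{M}^{i+1}\to 0$ is then a two-term resolution by finite free modules that remains exact after $\otimes_{A_{\inf}}A_{\cris}$ (an injective map of finite free modules over the domain $A_{\inf}$ stays injective over any overring inside $B_{\dR}$), which gives $\Tor_1^{A_{\inf}}(A_{\cris},\wt{M}^{i+1})=0$ and closes your argument. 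Without Lemma \ref{lem-injection} or a substitute for it, the proof is incomplete.
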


To prove this proposition, we first require some preparations.
Put $F:=\Frac(R)$ and note that $W(F)$ is a complete DVR with uniformizer $p$.
 \begin{lemma} \label{lem-prepare}
 The following statements hold:
 \begin{enumerate}
 \item $W(F)$ is flat over $A_{\inf}$.\label{isflat}
 \item  $W(F) \cap A_{\inf}[1/p] = A_{\inf}$.\label{intersect}
\item Let  $M$ be a finitely presented $A_{\inf}$-module with $M[\frac 1 p]$ finite and free over $A_{\inf}[\frac 1 p]$. 
If $M$ is $u$-torsion free then the map $M\to W(F) \otimes_{A_{\inf}} M $ is injective.
\label{noutor}
\item $A_{\cris} / (p^n)$ is faithfully flat over $S/(p^n)$ for all $n\ge 1$.
\label{fflat}
\end{enumerate}
 \end{lemma}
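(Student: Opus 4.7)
For (1), the plan is to use the standard $p$-adic flatness criterion: since $A_{\inf} = W(R)$ and $W(F)$ are both $p$-adically complete and $p$-torsion-free, flatness of $A_{\inf} \to W(F)$ reduces to flatness of the mod-$p$ reduction $R \to F = \Frac(R)$, which is a localization and hence flat. For (2), it suffices to show $A_{\inf}/p^n \hookrightarrow W(F)/p^n$ for every $n$: writing $x = y/p^n \in A_{\inf}[1/p] \cap W(F)$ with $y\in A_{\inf}$ exhibits $y \in A_{\inf} \cap p^n W(F)$, and in Witt coordinates the map $W_n(R) \to W_n(F)$ is just the coordinatewise application of the inclusion $R \hookrightarrow F$, hence injective.

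For (3), the forward direction is immediate: $u = [\underline{\pi}_0]$ is a unit in $W(F)$, since its image in $F$ is nonzero, so any nonzero $u$-torsion $m \in M$ gives $m \otimes 1 = u^{-1} \otimes (um) = 0$ in the kernel. For the converse, the plan is to invoke the structure theorem \cite[Prop.~4.13]{SBM} to reduce, via the short exact sequence $0 \to M_{\tor} \to M \to M_{\mathrm{tf}} \to 0$ (with $M_{\tor} = M[p^\infty]$ killed by some $p^N$ and $M_{\mathrm{tf}}$ being $p$-torsion-free), to two cases, both inheriting $u$-torsion-freeness. The $M_{\mathrm{tf}}$ case is handled by embedding into a finite free $A_{\inf}$-module (from the structure theorem) and invoking (1). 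For $M_{\tor}$: since $W(F) \otimes M_{\tor} = M_{\tor}[1/S]$ is the localization at Teichm\"uller lifts of nonzero elements of $R$, I need $[\bar a]$-torsion-freeness for every $\bar a \in R \setminus \{0\}$. The key point is that $R$ is a valuation ring, so $\underline{\pi}_0^k = \bar a\bar b$ in $R$ for $k$ large, lifting to $u^k = [\bar a][\bar b] + pe$ in $A_{\inf}$; if $[\bar a]m = 0$ then $u^k m = pem$, and a clean induction (using $[\bar a]m = 0$ at each step) shows $u^{jk} m = p^j e^j m$ for all $j \ge 1$. Taking $j = N$ forces $u^{Nk}m = 0$, hence $m = 0$ by $u$-torsion-freeness. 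A snake lemma applied to the flat base change of the short exact sequence then completes the argument.

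For (4), the plan is to realize $A_{\cris}/p^n$ as a flat base change of $S/p^n$. First, $k[\![u]\!] = \s/p \hookrightarrow R = A_{\inf}/p$ is faithfully flat: $R$ is torsion-free over the DVR $k[\![u]\!]$ (hence flat) with $R/\underline{\pi}_0 R \ne 0$ since $\underline{\pi}_0$ lies in the maximal ideal. By the standard criterion for maps of $p$-adically complete, $p$-torsion-free rings this upgrades to faithful flatness of $\s/p^n \to A_{\inf}/p^n$ for every $n$. Next, since both $A_{\cris}$ and $S$ are $p$-adic PD-envelopes of their respective ambient rings at the ideal generated by $E(u)$, compatibility of PD-envelopes with flat base change along $\s \hookrightarrow A_{\inf}$ yields $A_{\cris}/p^n \simeq (A_{\inf}/p^n) \otimes_{\s/p^n} (S/p^n)$. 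Since faithful flatness is preserved under base change, the conclusion follows.

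The main obstacle is in the torsion case of (3): propagating $u$-torsion-freeness to $[\bar a]$-torsion-freeness crucially uses that $M_{\tor}$ is killed by a single power of $p$. This is automatic once finite presentation of $M$ is ensured (as in \cite[Prop.~4.13]{SBM}), but since $A_{\inf}$ is non-Noetherian, finite generation does not formally imply finite presentation; in the intended application all relevant modules (Breuil--Kisin--Fargues modules and their subquotients) are finitely presented, so this distinction should not bite, but it deserves a remark. The PD-envelope compatibility step in (4) is classical but requires verifying the necessary flat-base-change hypotheses for divided-power envelopes.
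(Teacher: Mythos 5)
Your parts (2) and (4) are essentially the paper's own arguments: (2) comes down to the injectivity of $W_n(R)\to W_n(F)$ for the perfect rings $R\subset F$, and (4) rests on the same base-change identity $A_{\cris}/(p^n)\simeq (A_{\inf}/(p^n))\otimes_{\s/(p^n)}(S/(p^n))$, whose PD-envelope hypotheses the paper verifies via \cite[Tag 07HD]{stacks-project} --- exactly the step you defer. The genuine problems are in (1) and (3).

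In (1), the criterion you invoke (both rings $p$-adically complete and $p$-torsion-free, mod-$p$ reduction flat, hence the map is flat) is not a theorem over a non-noetherian base such as $A_{\inf}$. The non-noetherian local criterion of flatness only yields that $W(F)/p^n$ is flat over $A_{\inf}/p^n$ for every $n$, i.e.\ \emph{$p$-complete} flatness, and the passage from $p$-complete flatness to honest flatness is precisely what fails in general without noetherian input. The paper sidesteps this by observing that $A':=(A_{\inf})_{(p)}$ is a discrete valuation ring with uniformizer $p$ and residue field $F=\Frac(R)$, and that $W(F)$ is its $p$-adic completion; flatness then follows because localizations are flat and the completion of a \emph{noetherian} local ring is flat. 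Your conclusion is correct, but the route you propose does not reach it.

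In (3), your converse direction goes through the structure theorem \cite[Prop.~4.13]{SBM}, which requires $M$ to be finitely presented \emph{and} to have $M[1/p]$ free over $A_{\inf}[1/p]$. Neither is available where the lemma is actually used: in Lemma \ref{lem-injection} and Proposition \ref{prop-isotiliota}, part (3) is applied to modules such as $\ker(d_i)$ and $\im(d_i)$, which at that stage are only known to be finitely generated and $u$-torsion-free --- the whole point of that lemma is to \emph{prove} they are free, so appealing to a structure theorem for them is circular. The paper's argument is uniform in finitely generated $M$: it identifies the kernel of $M\to W(F)\otimes_{A_{\inf}}M$ by first localizing at the prime $(p)$ (where $u$ becomes a unit, so $u$-torsion-freeness enters) and then noting that $W(F)\otimes_{A_{\inf}}M$ is the $p$-adic completion of the finitely generated module $M_{(p)}$ over the noetherian DVR $A'$, so the Krull intersection theorem gives injectivity into the completion. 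Your Teichm\"uller induction ($u^k=[\bar a][\bar b]+pe$, whence $[\bar a]m=0$ forces $u^{jk}m=p^je^jm$) is a correct and pleasant argument for modules killed by a power of $p$, but it does not cover the modules the lemma must handle.
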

\begin{proof} Let $A' := ({A_{\inf}})_{(p)}$ be the localization of $A_{\inf}$ at the prime ideal $(p)$. Then $A'$
is a local ring with uniformizer $p$ and residue field $F=\Frac R$, so is a discrete valuation ring and in particular is
noetherian.  As a localization of $A_{\inf}$, it is moreover flat over $A_{\inf}$.
One checks via the theory of strict $p$-rings that $W(F)$ is the $p$-adic completion of $A'$, and hence flat over $A'$
as the completion of {\em any} noetherian local ring is flat \cite[Tag 00MB]{stacks-project}.
It follows that $W(F)$ is flat over $A_{\inf}$ giving (\ref{isflat}).

To prove (\ref{intersect}), suppose $x \in A_{\inf} [1/p] \cap W(F)$.
Then for $m$ sufficiently large we have
$$p ^m x \in A_{\inf} \cap p ^m W(F)= p^m W(R) = p^m A_{\inf}$$
by basic properties of Witt vectors, which gives $x\in A_{\inf}$.

To prove (\ref{noutor}), we proceed as follows.
First, for ease of notation, if $N$ is any $A_{\inf}$-module, we will write $\iota_N$ for the natural map
$N\rightarrow N\otimes_{A_{\inf}} W(F)$.
As $M[\frac 1 p]$ is finite and free over $A_{\inf}[\frac 1 p]$, one has an exact sequence as in 
(\ref{BKstructure}), which may be split as two short exact sequences:
\begin{subequations}
\begin{equation}
	\xymatrix{
	0 \ar[r] &  M_{\rm tor} \ar[r] &  M \ar[r] &  M'\ar[r] &  0
	}\label{seqA}
\end{equation}
\begin{equation}	
	\xymatrix{
	0 \ar[r] & M' \ar[r] &  M_{\rm free} \ar[r] & {\overline M} \ar[r] & 0
	}\label{seqB}
\end{equation}
\end{subequations}
%To prove this statement, we can use the exact sequence (5.8). We can split (5.8) to two short exact sequences $$ and $$ 
where $M_{\rm tor}$ is killed by $p^n$ for some $n$, and $M_{\rm free}$ is a finite free $A_{\inf}$-module. 
Since $W(F)$ is flat over $A_{\inf}$ by the already established (\ref{isflat}), the sequences (\ref{seqA}) and (\ref{seqB})
remain short exact after extending scalars to $W(F)$.  It follows that to prove $\iota_M$
is injective, it suffices to prove that $\iota_{M'}$ and $\iota_{M_{\rm tor}}$ are both injective.
Now $\iota_{M_{\rm free}}$ is certainly injective as $M_{\rm free}$ is free over $A_{\inf}$,
so $\iota_{M'}$ is injective thanks to (\ref{seqB}) and what we have observed above.
It remains to prove that $\iota_{M_{\rm tor}}$ is injective, so without loss of generality
we may reduce to the case that $M$ is killed by $p^n$ for some $n$.  We will show
that $\iota_M$ is injective by induction on $n$.  First suppose $n=1$.
Then $M$ is finitely presented as a module over the valuation ring $R=A_{\inf}/pA_{\inf}$, 
whence it is a direct sum of a finite free $R$-module and a module of the form
$\bigoplus_{i=1}^m R/a_i R$ with $a_i\in R$ nonzero \cite[Tag 0ASU]{stacks-project}; {\em cf.}~the proof of 
Lemma \ref{lem-injection} below.
 As $M$ is assumed to be $u$-torsion free, we must have $a_i\in R^{\times}$ for all $i$, 
 and $M$ is finite and free over $R$.  It follows at once that $\iota_M: M\rightarrow M\otimes_R F$ is injective.
Now suppose that $M$ is killed by $p^n$ for some $n>1$ and consider the exact sequence
\begin{equation}\label{eq1}
\xymatrix{
0 \ar[r] &  M[p] \ar[r] &  M  \ar[r] & M'  \ar[r] &  0, 
}
\end{equation}
where $M[p ]= \{x\in M\ :\ p x = 0 \}$. 
We claim that $M'$ has no $u$-torsion.  Indeed, if $y \in M$ has $u y \in M[p]$ then $ p u y = u (py) = 0$. 
Since $M$ has no $u$-torsion, this forces $py= 0$ and $y \in M[p]$, so that $M'$ is $u$-torsion free. 
Since $M[p]$ is also $u$-torsion free (being a submodule of a $u$-torsion free module)
and both $M[p]$ and $M'$ are killed by $p^{n-1}$, our inductive hypothesis gives that 
$\iota_{M[p]}$ and $\iota_{M'}$ are injective, and it follows that $\iota_M$ is injective as well.

To prove (4), let $S^{\PD}:= \s[\{E(u )^n/n!\}_{n \geq 1}]$ be the divided power envelope of $\s$
with respect to the kernel of
the surjection $\s\twoheadrightarrow \O_K$ sending $u$ to $\pi_0$, so that $S$ is the $p$-adic completion
of $S^{\PD}$.  We similarly write $A_{\inf}^{\PD}$ for the divided power envelope
of $A_{\inf}$ with respect to $\ker(\theta_{\inf})$, so again $A_{\cris}$ is the $p$-adic completion
of $A_{\inf}^{\PD}$.  We claim that the natural map $A_{\inf}\otimes_{\s} S^{\PD}\rightarrow A_{\inf}^{\PD}$
is an isomorphism.  Indeed, this follows from \cite[Tag 07HD]{stacks-project}
once we check that $\s/(p)\rightarrow A_{\inf}/(p)$ is flat
and $\Tor_1^{\s}(A_{\inf},\s/(p))=0$.  As $\s/(p)=k[\![u]\!]$ is a DVR and $A_{\inf}/(p)=R$ is torsion-free,
the first is clear \cite[Tag 0539]{stacks-project}, as is the second since $A_{\inf}=W(R)$ is $p$-torsion free.
Thanks to \cite{SBM}[Lem. 4.30], the map $\s\rightarrow A_{\inf}$
is flat, whence its scalar extension
$S^{\PD}= \s \otimes_{\s} S^{\PD}\rightarrow A_{\inf}\otimes_{\s}S_{\PD}=A^{\PD}_{\inf}$
is flat as well.  This implies that the map $S/(p^n)=S^{\PD}/(p^n)\rightarrow A^{\PD}_{\inf}/(p^n)=A_{\cris}/(p^n)$
is flat for every $n\ge 1$.  Since $S\rightarrow A_{\cris}$ is a local map of local rings
we conclude that the flat maps  $S/(p^n)\rightarrow A_{\cris}/(p^n)$ are faithfully flat.
\end{proof}

\begin{remark}
	Perhaps surprisingly, we do not know whether or not $S\rightarrow A_{\cris}$ is (faithfully) flat.
\end{remark}

In what follows, for an $A_{\inf}$-module $M$ we will simply write
$M_{W(F)}$ for the scalar extension $W(F) \otimes_{A_{\inf}}M$.
For a map $f:M\rightarrow M'$ of $A_{\inf}$-modules,
we write $f_{W(F)}:=f\otimes 1 : M_{W(F)}\rightarrow M'_{W(F)}$
for the induced map of $W(F)$-modules.

  \begin{lemma}\label{lem-injection} Let  $f : M \to M'$ be  a map of $A_{\inf}$-modules. Assume that
  \begin{enumerate}\item $M$ is finite and free over $A_{\inf}$,
  \item $M'$ is finitely presented over $A_{\inf}$ and $u$-torsion free,	
	with $M'[\frac 1 p]$  finite and free over $A_{\inf}[\frac 1 p]$.
  \item  $N:  = \Ker(f)$ is a finitely generated $A_{\inf}$-module.
 \end{enumerate}
   Then $N$ is finite free over $A_{\inf}$.
 \end{lemma}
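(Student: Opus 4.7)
My plan is to leverage the $u$-saturation of $N$ inside $M$ together with the structure theorem \cite[Prop.~4.13]{SBM} for finitely presented $A_{\inf}$-modules. First I would observe that $\im(f) \subseteq M'$ is $u$-torsion free, so the exact sequence $0 \to N \to M \to \im(f) \to 0$ remains exact after $- \otimes_{A_{\inf}} A_{\inf}/u$; in particular, $N$ is $u$-saturated in $M$ in the sense that any $x \in M$ with $u^k x \in N$ for some $k \ge 0$ already lies in $N$. The embedding $N \hookrightarrow M = A_{\inf}^n$ moreover shows $N$ is torsion free, and coherence of $A_{\inf}$ together with the finite generation hypothesis then promotes $N$ to a finitely presented $A_{\inf}$-module.

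Applying \cite[Prop.~4.13]{SBM} to $N$ then gives an exact sequence $0 \to N_{\tor} \to N \to N_{\free} \to \bar N \to 0$, with $N_{\tor}$ killed by a power of $p$, $N_{\free}$ finite free, and $\bar N$ killed by both $p^{k_1}$ and $u^{k_2}$ for some $k_1, k_2 \ge 0$. Since $N$ is $p$-torsion free, $N_{\tor} = 0$, and it suffices to show $\bar N = 0$. The inclusion $p^{k_1} N_{\free} \subseteq N$ (from $p^{k_1}\bar N = 0$) gives $N[1/p] = N_{\free}[1/p]$, and combining with the $p$-torsion freeness of $N_{\free}$ yields a natural inclusion $N_{\free} \hookrightarrow N[1/p] \hookrightarrow M[1/p]$. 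For any $y \in N_{\free}$, one has both $p^{k_1} y \in N \subseteq M$ and $u^{k_2} y \in N \subseteq M$; since $(p,u)$ is a regular sequence in $A_{\inf}$, a routine check shows $M = M[1/p] \cap M[1/u]$ inside $M[1/pu]$, whence $y \in M$. With $N_{\free} \subseteq M$ established, $u$-saturation of $N$ in $M$ applied to $u^{k_2} y \in N$ forces $y \in N$, giving $N_{\free} \subseteq N$ and thus $N = N_{\free}$ is finite free, as desired.

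The main obstacle I anticipate is ensuring finite presentation of $N$ in order to cleanly invoke the structure theorem in the non-noetherian ring $A_{\inf}$. This will either require appealing to coherence results for $A_{\inf}$ in the literature or constructing a finite presentation directly from the inclusion $N \hookrightarrow M = A_{\inf}^n$ and the finite generation of $N$. A secondary, purely calculational, point requiring care is the identity $M = M[1/p] \cap M[1/u]$ inside $M[1/pu]$, whose proof rests on the regularity of $(p,u)$ and $p$-adic separatedness of $A_{\inf}$.
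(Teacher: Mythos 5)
Your overall strategy---reduce to the structure theorem \cite[Prop.~4.13]{SBM} and then kill $\bar N$ using $u$-saturation of $N$ in $M$ together with the identity $M = M[1/p]\cap M[1/u]$---has a genuine gap at its foundation: the structure theorem applies only to \emph{finitely presented} $A_{\inf}$-modules $N$ for which $N[1/p]$ is finite \emph{free} over $A_{\inf}[1/p]$, and you establish neither hypothesis. The first cannot be obtained the way you suggest: $A_{\inf}$ is \emph{not} coherent (this is a theorem of Kedlaya, answering a question left open in \cite{SBM}), so a finitely generated submodule of a free module need not be finitely presented, and ``constructing a finite presentation directly'' from $N\hookrightarrow A_{\inf}^n$ is precisely the hard, non-noetherian difficulty the lemma is designed to circumvent. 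The second hypothesis you do not address at all: $N[1/p]$ is a finitely generated torsion-free submodule of $A_{\inf}[1/p]^n$, and over the non-noetherian, non-valuation ring $A_{\inf}[1/p]$ there is no general principle making such a module free. Since both inputs to \cite[Prop.~4.13]{SBM} are missing, the decomposition $0\to N_{\tor}\to N\to N_{\free}\to\bar N\to 0$ on which everything downstream rests is not available.

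The parts of your argument that are correct---the $u$-saturation of $N$ in $M$ coming from $u$-torsion-freeness of $\im(f)$, and the intersection identity $M=M[1/p]\cap M[1/u]$ via regularity of $(p,u)$---are genuinely in the spirit of what is needed, but the paper deploys such intersection arguments differently so as to avoid the structure theorem entirely. It first proves $N=N[1/p]\cap (W(F)\otimes_{A_{\inf}}N)$ inside $W(F)\otimes_{A_{\inf}}M[1/p]$ (using flatness of $A_{\inf}\to W(F)$ and $W(F)\cap A_{\inf}[1/p]=A_{\inf}$ from Lemma \ref{lem-prepare}), deduces that $N/pN$ injects into the finite-dimensional $F$-vector space $N_{W(F)}/pN_{W(F)}$, hence is a finitely generated torsion-free---and therefore free---module over the valuation ring $R$, and then lifts an $R$-basis by Nakayama, checking freeness of the lift by reducing any putative relation modulo $p$. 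If you want to salvage your approach, you would need to replace the appeal to \cite[Prop.~4.13]{SBM} by an argument of this kind that works for merely finitely generated modules.
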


 \begin{proof}  Lemma \ref{lem-prepare} gives a commutative diagram with exact rows
 $$ \xymatrix{ 0 \ar[r]  & N_{W(F)} \ar[r] & M_{W(F)} \ar[r] & M'_{W(F)}\\
 0 \ar[r]  & N \ar[r] \ar@{^(->}[u]  \ar@{^(->}[d]  & M \ar[r]  \ar@{^(->}[u]  \ar@{^(->}[d]   & M'   \ar@{^(->}[u]  \ar[d]    \\
 0 \ar[r]  & N[1/p] \ar[r] & M[1/p] \ar[r] & M'[1/p] 
 }
 $$
in which all vertical arrows, with the possible exception of the lower right arrow, are injective.
Now we claim that $N = N[1/ p] \cap N_{W(F)}$ inside $ M_{W(F)}[1/p]$.
To prove the claim,
let $x \in N[1/p]\cap N_{W(F)}  \subset M [1/p] \cap M_{W(F)}$ be arbitrary.
Then from the diagram we see that $f[1/p](x) = f_{W(F)} (x) = 0$.
On the other hand,
using Lemma \ref{lem-prepare} (\ref{intersect}) and our hypothesis that $M$ is finite free over $A_{\inf}$,
we deduce that $x \in M [1/p] \cap M_{W(F)}= M$.
Furthermore, since the upper vertical arrows are injective
and $f_{W(F)}( x)= 0$, we must have $f(x)= 0$ and $x\in \ker(f)= N$ as claimed.
From this claim it follows at once that the natural map $N/ p N\to  N_{W(F)}/ p N_{W(F)}$
is {\em injective}.  Since the target is a finite dimensional $F$-vector space
and the source is a finitely generated $R$-module, we conclude that $N/pN$
is a finite and {\em torsion-free} $R$-module, and hence a finitely generated submodule of
a free $R$-module.\footnote{Indeed, an elementary argument shows that any
finitely generated and torsion-free module over a commutative domain
may be embedded as a submodule of a free module.
}
Now $R$ is a (non-noetherian) valuation ring, and hence a B\'ezout domain,
from which it follows that $N/pN$ is a finite and {\em free} $R$-module \cite[Tag 0ASU]{stacks-project}.
Let $x_1, \dots , x_m$ be a $R$-basis of $N/ pN$ and choose lifts $\hat x_i \in N$.
By Nakayama's Lemma, these lifts generate $N$ as an $A_{\inf}$-module.
Now any nontrivial $A_{\inf}$-relation $\sum \alpha_i \hat{x}_i = 0$
on these generators may be re-written as $p^j\sum \alpha_i' \hat{x}_i = 0$
for some nonnegative integer $j$, with at least one $\alpha_i'$ nonzero
modulo $p$.  But $N$ is a submodule of the free module $M$, and hence
torsion-free, from which it follows that  $\sum \alpha_i' \hat{x}_i = 0$.
But this relation reduces modulo $p$ to a {\em non-trivial} relation
on the $x_i$, contradicting the fact that $N/pN$ is $R$-free.
We conclude that the $\hat{x}_i$ freely generate $N$, as desired.
 \end{proof}

\begin{proof} [Proof of Proposition $\ref{prop-isotiliota}$]
Let $C^\bullet$ be a bounded complex of finite projective $A_{\inf}$-modules
that is quasi-isomorphic to the perfect complex $\RG_{A_{\inf}}(\X)$.
As $A_{\inf}$ is a local ring, each term $C^i$ is a finite, free $A_{\inf}$-module.
We then have
$\RG_{A_{\inf}}(\X) \otimes^{\mathbb L}_{A_{\inf}} A_{\cris} = C^\bullet \otimes_{A_{\inf}} A_{\cris}$,
and our goal is to prove that the canonical map
$$H^i(C^\bullet)\otimes_{A_{\inf}} A_{\cris} \rightarrow H^i(C^\bullet \otimes_{A_{\inf}} A_{\cris})$$
is an isomorphism.  Writing $d^j: C^j\to C^{j+1}$ for the given maps,
we first show that the natural map
\begin{equation}
	A_{\cris}    \otimes_{A_{\inf}}  \Ker (d^i ) \to \Ker (A_{\cris}  \otimes_{A_{\inf}}d^i )
	\label{map:prelim}
\end{equation}	
is an isomorphism.	
	 To see this, consider the exact sequences of $A_{\inf}$-modules
\begin{subequations}
	\begin{equation}
		\xymatrix{
	0 \ar[r] & \ker(d^i) \ar[r] & C^i \ar[r] & \im(d^i) \ar[r] & 0
	}\label{seq:kerd}
	\end{equation}	
	\begin{equation}
		\xymatrix{
	0 \ar[r] & \im(d^i) \ar[r] & \ker(d^{i +1}) \ar[r] & {\wt M^{i +1}} \ar[r] & 0
	}\label{seq:imd}
	\end{equation}	
\end{subequations}
Now $C^i$ is finite over $A_{\inf}$, so same is true of its quotient $\im(d^i)$,
whence $\ker(d^{i+1})$ is finitely generated because the Breuil--Kisin--Fargues module $\wt{M}^{i+1}$ is.
Likewise, $\ker(d^i)$ is finitely generated, and since $C^i$ and $C^{i+1}$
are finite free, it follows from Lemma \ref{lem-injection} that $\ker(d^i)$
and $\ker(d^{i+1})$ are finite free---hence flat---$A_{\inf}$-modules.
Since $A_{\inf}$ and $A_{\cris}$ are both contained in the field $B _{\dR}$,
an easy argument then shows that the inclusions
$\ker (d^i) \hookrightarrow C^i$ and $\ker (d^{i +1})\hookrightarrow  C^{i +1}$ remain
injective after tensoring with $A_{\cris}$; in particular, the sequence (\ref{seq:kerd})
remains exact after tensoring with $A_{\cris}$.

In general, the sequence (\ref{seq:imd}) may not
remain exact after tensoring with $A_{\cris}$.  However, using our hypothesis
that $\wt{M}^{i+1}$ is $u$-torsion free and the fact that $\im(d^i)$
is finitely generated, Lemma \ref{lem-injection} shows that
$\im(d^i)$ is in fact finite and free as an $A_{\inf}$-module.
Arguing as above, we conclude that the sequence (\ref{seq:imd})
likewise remains exact after tensoring with $A_{\cris}$.
Since $\ker (d^{i +1})\hookrightarrow  C^{i +1} $ remains injective
after tensoring with $A_{\cris}$, it follows that the
map $\im(d^i)\otimes_{A_{\inf}} A_{\cris}\to \im(d^i\otimes A_{\cris})$
is injective.  An easy diagram chase now shows that the
map (\ref{map:prelim}) is an isomorphism as claimed.

Now consider the following diagram
$$
\xymatrix{
&  A_{\cris} \otimes_{A_{\inf}} \im (d^{i-1}) \ar@{->>}[d] \ar[r] & A_{\cris} \otimes_{A_{\inf}} \Ker (d^i)\ar[d]^\wr   \ar[r] & A_{\cris} \otimes_{A_{\inf}} \wt M^i \ar[r]\ar[d]^{\wt{\iota}} &  0\\
0 \ar[r] &   \im ( A_{\cris} \otimes_{A_{\inf}} d^{i-1})  \ar[r] &  \Ker (A_{\cris} \otimes_{A_{\inf}}d^i)  \ar[r] &  H^i _{\cris} (\X/ A_{\cris})\ar[r]  & 0
}
$$
Since the first column is surjective by right-exactness of tensor product
and we have just seen that the
second column is an isomorphism, the Snake Lemma completes the proof
that the map $\wt{\iota}$ of the third column is an isomorphism.
\end{proof}

\begin{remark}
	Our application of Proposition \ref{prop-isotiliota} will be to the proof of
	Theorem \ref{thm-main-crystokisin}.  Under the hypotheses of
	this theorem, one knows that
	$\wt{M}^j$ is in fact {\em free} of finite rank over $A_{\inf}$ for $j=i, i+1$
	(see below), so for our purposes it would be enough to have the conclusion of
	Proposition \ref{prop-isotiliota} under the stronger hypothesis that $\wt{M}^{i+1}$
	is free over $A_{\inf}$.  The following short proof of this variant was suggested to us by Bhargav Bhatt.
	
	In the notation of the proof of Proposition \ref{prop-isotiliota},
	and putting $M:=\wt{M}^j=H^j(C^\bullet)$,
	we first claim that
	the complex $M \otimes^{\mathbb L}_{A_{\inf}} A_{\cris}$
	is concentrated in homological degrees 0 and 1; that is, that $\Tor_i^{A_{\inf}}(M, A_{\cris})=0$ for $i\ge 2$.
	To see this, first note that $M$ has bounded $p$-power torsion and, as a complex of $A_{\inf}$-modules,
	is perfect thanks to Theorem 1.8 and Lemma 4.9 of \cite{SBM}.  It follows that
	the pro-systems $\{M/p^nM\}_n$ and $\{M \otimes^{\mathbb L}_{A_{\inf}} A_{\inf}/(p^n)\}$ are pro-isomorphic,
	and that $M\otimes^{\mathbb L}_{A_{\inf}} A_{\cris}$ is $p$-adically complete.  We deduce isomorphisms
	\begin{equation*}
		M\otimes^{\mathbb L}_{A_{\inf}} A_{\cris} \simeq
		\varprojlim_n \left(M \otimes^{\mathbb L}_{A_{\inf}} A_{\cris}/(p^n)\right)\simeq
				\varprojlim_n \left(M/p^nM \otimes^{\mathbb L}_{A_{\inf}/(p^n)} A_{\cris}/(p^n)\right).
	\end{equation*}
	The claim follows from the fact that $A_{\cris}/(p^n)$ has Tor-dimension 1 over $A_{\inf}/(p^n)$.\footnote{Indeed,
	one has the evident presentation
	$\xymatrix@1{A_{\inf}/(p^n)\langle T\rangle \ar@{^{(}->}[r]^-{T-\xi_n} & A_{\inf}/(p^n)\langle T\rangle \ar@{->>}[r]&
	A_{\cris}/(p^n) },$
	where $\xi_n$ is a generator of the principal ideal $\ker(A_{\inf}/(p^n)\twoheadrightarrow \O_{\Kbar}/(p^n))$.
	From the very construction of the divided power polynomial algebra $A_{\inf}/(p^n)\langle T\rangle$,
	 this is then a 2-term resolution by free---hence flat---modules, so \cite[Tag 066F]{stacks-project} gives the asserted Tor-dimension.
	}
	
	Next, we claim that the cohomology groups
	$H^{j}(\tau^{> i+1}C^\bullet  \otimes^{\mathbb L}_{A_{\inf}} A_{\cris}) $
	vanish for $j \le i$.  More generally, suppose that
	$N^{\bullet}$ is any bounded complex with $\tau^{< 0} N^{\bullet} = 0$ and
	$H^j(N^{\bullet}) \otimes^{\mathbb L}_{A_{\inf}} A_{\cris}$ concentrated in homological degrees
	0 and 1.  Then an easy induction argument on the number of nonzero terms in $N^{\bullet}$
	shows that $\tau^{<1}(N^{\bullet} \otimes^{\mathbb L}_{A_{\inf}} A_{\cris}) =0$,
	and hence that $H^0(N^{\bullet} \otimes^{\mathbb L}_{A_{\inf}} A_{\cris})$---which
	is a quotient of this complex---is zero as well.  Applying this with
	$N^{\bullet}=(\tau^{> i+1}C^\bullet)[-j]$ then gives the claimed vanishing.

	Applying $\otimes^{\mathbb L}_{A_{\inf}} A_{\cris}$ to the exact triangle
	\begin{equation*}
		\xymatrix{
			\tau^{\le i+1} C^\bullet \ar[r] & C^\bullet \ar[r] & \tau^{> i+1}C^\bullet
		}
	\end{equation*}
	and passing to the long exact sequence of cohomology modules thus yields an isomorphism
	\begin{equation*}
					H^{i}(\tau^{\le i+1}C^\bullet  \otimes^{\mathbb L}_{A_{\inf}} A_{\cris}) \simeq
			H^i(C^\bullet\otimes^{\mathbb L}_{A_{\inf}} A_{\cris}).
	\end{equation*}	
	On the other hand, applying $\otimes^{\mathbb L}_{A_{\inf}} A_{\cris}$ to the exact triangle
	\begin{equation*}
		\xymatrix{
			\tau^{\le i} C^\bullet \ar[r] & \tau^{\le i+1} C^\bullet \ar[r] & H^{i+1}(C^\bullet)[-i-1]
		}
	\end{equation*}
	and passing to cohomology gives the short exact sequence
	\begin{equation*}
		\xymatrix{
			0 \ar[r] & H^i(\tau^{\le i} C^\bullet \otimes^{\mathbb L}_{A_{\inf}} A_{\cris})
			\ar[r] & H^i(\tau^{\le i+1} C^\bullet \otimes^{\mathbb L}_{A_{\inf}} A_{\cris})
			\ar[r] & \Tor_1^{A_{\inf}}(H^{i+1}(C^\bullet),A_{\cris}) \ar[r] & 0
		}
	\end{equation*}
	in which the first term is readily seen to be isomorphic to $H^i(C^{\bullet})\otimes_{A_{\inf}}A_{\cris}$ since we are taking
	``top degree'' cohomology.  Thus, when $\wt{M}^{i+1}=H^{i+1}(C^\bullet)$ is free over $A_{\inf}$ so the Tor
	vanishes, we deduce that (\ref{map:tildeiota}) is an isomorphism, as desired.
\end{remark}

\begin{proof}[Proof of Theorem $\ref{thm-main-crystokisin}$]
	If $j$ is {\em any} nonnegative integer with the property that  $H^j_{\cris}(\X_k/W(k))$ is torsion free,
	then Theorem 14.5 and Proposition 4.34 of \cite{SBM} show
	 \begin{enumerate}
	 	 \item $T= H^j_{\et} (X_{\o{K}}, \Z_p)$ is finite free of  $\Z_p$-rank
		 $d:=\dim_{\Q_p}H^j_{\et}(X_{\o{K}},\Q_p)$.
	 	 \item $\wt M^j$ is finite free of rank $d$ over $A_{\inf}$ and $\wt M^j \simeq A_{\inf} \otimes_{\s} M^j$
		 via the map $\alpha$ of Lemma \ref{lem-compatible[1/p]}.
 \end{enumerate}
 	Thus, our hypothesis that $H^j_{\cris}(\X_k/W(k))$ is torsion-free for $j=i, i+1$
 	implies in particular that $\wt{M}^{i+1}$ is $u$-torsion free, and hence that the natural map
 	 $$
	 \xymatrix{
	 	{\tilde \iota : A_{\cris} \otimes _{A_{\inf}} \wt M^i}\ar[r] &
		{H^i (\X_{\O_{\o{K}}/(p)}/ A_{\cris}) = A_{\cris} \otimes_S \scrM^i}
		}
	$$	
	 of (\ref{map:tildeiota}) is an isomorphism thanks to Proposition \ref{prop-isotiliota}.
	Thus, $ A_{\cris} \otimes_S \scrM^i$ is a finite and free $A_{\cris}$-module of rank $d$,
	so also $A_{\cris} \otimes_S \scrM^i/(p^n)$ is finite and free of rank $d$ as an $A_{\cris}/(p^n)$-module.
	 Using Lemma \ref{lem-prepare} (\ref{fflat}) together with
	the facts that the property ``finite projective" for modules descends
	along faithfully flat morphisms \cite[Tag 058S]{stacks-project} and
	finite projective implies free \cite[Tag 00NZ]{stacks-project} (see also \cite[Tag 0593]{stacks-project}),		
	we deduce that $\scrM^i/(p^n)$ is finite and free over $S/(p^n)$, necessarily of rank $d$.
	Let $e_1, \dots , e_d$ be a basis of $\scrM^i/(p^n)$ and choose lifts $\hat e_j\in \scrM^i$ of $e_j$.
	By Nakayama's Lemma, $\scrM^i$ is generated by the $\hat e_j$.
	There can be no linear relations as $\scrM^i[1/p] = \scrD^i\simeq \wt{\scrD}^i = S\otimes_{W(k)} D^i$ is
	finite and free over $S[1/p]$ of rank $d=\dim_{W(k)[1/p]} D^i$. This proves that $\scrM^i$ is finite $S$-free.

	 Consider the isomorphism
	 $\iota:  \underline \scrM (M^i)[1/p] \simeq \scrM^i  [1/p]$
	of (\ref{eqn-iso}).  Identifying $\u{\scrM}(M^i)$ with its image
	in $\scrD^i=\scrM^i[1/p]$ under $\iota$, our goal is then to prove that
	$\u{\scrM}(M^i)=\scrM^i$ inside $\scrD^i$, and to do so 	
	it suffices to prove that $q(M^i)=q(\scrM^i)$ where
	$q:\scrD\rightarrow D$ is the canonical projection induced by reduction modulo the ideal $I_+S:= S[1/p] \cap u K_0 [\![u]\!]$.
	Extending $q$ to a map
 $$\wt{q}:B_{\cris}^+\otimes_{S} \scrM =  B_{\cris}^+ \otimes_{S[1/p]} \scrD^i
 \rightarrow W(\o{k})[1/p] \otimes_{K_0} D$$
 in the obvious way, due to Lemma \ref{lem-compatible[1/p]} and our identifications, it is then enough to prove that
 \begin{equation}
 	\wt{q}(A_{\cris}\otimes_{A_{\inf}}\wt{M}^i)=\wt{q}(A_{\cris}\otimes_S \scrM^i),
	\label{qtil}
\end{equation}	
	where
 $A_{\cris}\otimes_{A_{\inf}}\wt{M}^i$ is viewed as a submodule of $B_{\cris}^+\otimes_S \scrM^i$ via (\ref{Lemma:commdiag}).
 But $\wt{\iota}$ carries $A_{\cris}\otimes_{A_{\inf}}\wt{M}^i$ isomorphically on to
 $A_{\cris}\otimes_S \scrM^i$ as we have seen, so the desired equality (\ref{qtil})
 indeed holds.

 Recalling that we have defined $\Fil ^i \scrM ^i := \Fil ^i \scrD ^i \cap \scrM ^i,$
and the map $\iota$ of (\ref{eqn-iso}) is compatible with filtrations,
we find
\begin{equation}
	\Fil^i \scrM^i = \Fil^i \scrD^i \cap \scrM^i = \Fil^i(\u{\scrM}(M^i)[1/p]) \cap \scrM^i
	=\Fil^i(\u{\scrM}(M^i)[1/p]) \cap \u{\scrM}(M^i)
	\label{FilsEq}
\end{equation}
via our identifications. From the construction of $\Fil ^i \u{\scrM}(M^i)$, it is easy to show that $\Fil ^i \u{\scrM}(M^i)$ is saturated as submodule of $\u\scrM (M^i)$
Hence the right side of (\ref{FilsEq})
coincides with $\Fil^i \u{\scrM}(\m^i)$.  We conclude that the $\varphi$-compatible isomorphism
of $S$-modules
$\u{\scrM}(\m^i)\simeq \scrM^i$ induced by $\iota$ is moreover filtration compatible,
from which it follows that $\scrM^i$ is isomorphic to $\u{\scrM}(M^i)$. As $M^i$ is the Breuil--Kisin module corresponding to $(T^i) ^\vee$, Theorem \ref{thm-classical} shows that
$\scrM^i$ is a strongly divisible lattice and $T_{\st} (\scrM) \simeq (T^i)^\vee$.
  By Theorem \ref{MainThm},
we then have a natural isomorphism of Breuil-Kisin modules $\u{M}(\scrM^i)\simeq M^i$
as desired.
\end{proof}

\section{Further directions}

In this section, we discuss some questions
and directions for further research.

\subsection{Geometric interpretation of filtrations}\label{subsec-1}

Let $\X$ be a smooth and proper scheme over $\O_K$,
and let $(M,\Fil^i M,\varphi_{M,i})$ be the
Breuil--Kisin module (in the sense of Definition \ref{def:BKvar})
attached to the dual of the Galois lattice
$T^i:=H^i_{\et}(\X_{\o{K}},\Z_p)/\tors$.
For $p>2$, when $i < p-1$ and $H^j_{\cris}(\X_{k}/W(k))$ is torsion free for $j=i,i+1$,
our main result Theorem \ref{thm-main-crystokisin} provides the canonical ``cohomological" interpretation
\begin{equation*}
	M \simeq \u{M}(H^i_{\cris}(\X_0/S)):=
	\varprojlim_{\varphi,n} \Fil^0(H^i_{\cris}(\X_0 / S) \otimes_{S} S_n[z_n^{-1}]).
\end{equation*}
However, our definition
of the filtration on $\scrM:=H^i_{\cris}(\X_0/S)$---which plays a key role in the very
definition of the Breuil--Kisin module $\u{M}(\scrM)$---is not
as explicitly ``geometric" as one might like.  Indeed, put $V:=T[1/p]$
and denote by $\wt{\scrD}:=\scrD(V^{\vee})$ the filtered $(\varphi,N)$-module over $S[1/p]$
attached to $D:=D_{\cris}(V^{\vee})$ just above Definition \ref{Def-SDlattice}.
Using the Hyodo--Kato isomorphism $\scrM[1/p] \simeq \wt{\scrD}$ of $\varphi$-modules over $S[1/p]$,
we equip $\scrM[1/p]$ with a filtration by ``transport of structure", and have given the crystalline
cohomology $\scrM$ of $\X_0$ the filtration
$\Fil^i \scrM:=\scrM\cap \Fil^i (\scrM[1/p])$; see \S\ref{pdivGone}.

This filtration on $\scrM$ can be defined cohomologiclly as follows.
For $m\ge 0$ set $E_m:=\Spec(S/p^mS)$ and $\Y_m:=\X \times_{\O_K} \O_K/p^m\O_K$, and
let $\J_{m}$ be the sheaf of PD-ideals
on the big crystalline site $\Cris(\Y_m/E_m)$ whose value on the object $(U\hookrightarrow T,\delta)$ is $\ker(\O_T\rightarrow \O_U)$.
Writing $\J_m^{[i]}$ for the $i$-th divided power of $\J_m$,
Tsuji has proved \cite{TsujiNote}  
that one has a canonical
isomorphism
\begin{equation}
	\Fil^i (\scrM[1/p]) \simeq S[1/p]\otimes_S\varprojlim_m H^i((\Y_m/E_m)_{\cris},\J_m^{[i]}).
	\label{filhope}
\end{equation}
%Unfortunately, we could not find a precise reference for this isomorphism in the literature.
%It would follow if one knew that the Hyodo--Kato isomorphism
%\begin{equation*}
%	H^i_{\cris}(\X_0/S)\otimes_S K \simeq H^i_{\dR}(\X_K/K)
%\end{equation*}
%carried the $S[1/p]$ submodule of $H^i_{\cris}(\X_0/S)\otimes_S K$ given by the right side of (\ref{filhope})
%above isomorphically onto the $K$-subspace $\Fil^i H^i_{\dR}(X_K/K)$ provided by the Hodge filtration.
%Presumably, this can be extracted from \cite{Hyodo-Kato} or \cite{Tsuji}, and in any case we expect
%that a proof of such compatibility with filtrations can be given along the lines of the proof of
%\cite[Proposition 13.9]{SBM}, using Berthelot's crystalline interpretation \cite[Theorem 7.23]{crystal} of the Hodge
%filtration.  
One can of course ask if the stronger, $p$-integral version of (\ref{filhope}) holds as well,
that is, whether or not (\ref{filhope}) carries $\Fil^i\scrM$ isomorphically onto
$\varprojlim_m H^i((\Y_m/E_m)_{\cris},\J_m^{[i]}).$
If true, such an isomorphism would of course be the ``best possible"
cohomological interpretation of $\Fil^i\scrM$.

\subsection{Other Frobenius Lifts and Wach Modules}

We expect that our main result can be generalized to give a cohomological
description of the generalization of Breuil--Kisin modules constructed in \cite{Cais-Liu},
which include the Wach modules of Berger \cite{Berger}, \cite{BergerBreuil}, as well as the modules of Kisin--Ren \cite{KisinRen}.
More precisely, let $F\subseteq K$ be a subfield which is finite over $\Q_p$ with residue field $k_F$
of cardinality $q=p^s$ and fixed uniformizer $\varpi$.  Choose a power series
$f(u) := a_1u + a_2u^2 + \cdots \in \O_F[\![u]\!]$
with $f(u)\equiv u^q\bmod \varpi$ and a uniformizer $\pi_0$ of $K$ with minimal polynomial
$E(u)$ over $F_0:=K_0\cdot F$.  Choose $\underline{\pi}:=\{\pi_n\}_{n\ge 1}$
with $\pi_n\in \overline{K}$ satisfying $f(\pi_n)=\pi_{n-1}$ for $n\ge 1$.
The resulting extension $K_{{\underline{\pi}}}:=\bigcup_{n\ge 0} K(\pi_n)$
(called a {\em Frobenius iterate extension} in \cite{CaisDavis}) is an infinite
and totally wildly ramified extension of $K$ which in general need not be Galois,
though in the special case that $v_{\varpi}(a_1)=1$ and $K$ is obtained from $F$
by adjoining the roots of $f(u)=0$, it is a Lubin--Tate extension of $F$.

Define $\s:=W[\![u]\!]$ and put $\s_F   = \O_F \otimes _{W(k_F)} \s$.
We equip $\s_F$ with the (unique continuous)
Frobenius endomorphism $\varphi$ which acts on $W(k)$ by the $q$-power Witt-vector Frobenius,
acts as the identity on $\O_F$,
and sends $u$ to $f(u)$.
Define $S_F$ to be the $\varpi$-adic completion of the $\O_F$-divided power envelope
(in the sense of Faltings \cite{FaltO}) of
the $\O_F$-algebra surjection $\s_F\twoheadrightarrow \O_K$
sending $u$ to $\pi_0$.
There are evident analogues $\Mod_{\s_F}^{\varphi,r}$ and $\Mod_{S_F}^{\varphi,r}$
of the categories of Breuil--Kisin and Breuil modules in this setting, and
the recent Ph.~D. thesis of Henniges
\cite{Henniges} shows that the
canonical base change functor $\Mod_{\s_F}^{\varphi,r}\rightarrow \Mod_{S_F}^{\varphi,r}$
is an isomorphism when $p>2$ and $r < q-1$.  We expect that the methods of the present paper
can be adapted to provide an explicit quasi-inverse to this base change functor,
along the lines of Definition \ref{Def:Mfunctor} and Theorem \ref{MainThm}.
When one moreover has a theory of crystalline cohomology that
produces Breuil modules over $S_F$, we further expect that
Theorem \ref{thm-main-crystokisin} can be generalized, thereby giving
a geometric description of the Breuil--Kisin modules constructed in \cite{Cais-Liu}
or in \cite{KisinRen}.
When $F$ is unramified over $\Q_p$, so $S_F$ is the usual completed PD-envelope of
$\s \twoheadrightarrow \O_K$, then the classical theory of crystalline cohomology already
provides the necessary machinery to carry out this vision.
For general $F$, one also has such a crystalline theory in the Barsotti--Tate setting $r=1$
when $K_{\u{\pi}}/F$ is Lubin--Tate, thanks to the work of Faltings \cite{FaltO}.

Perhaps the simplest and most promising instance of the above framework is when $F=W(k)[1/p]$, $K=F(\mu_{p})$
and $\varphi(u)=(1+u)^p -1$.  One may then choose $\u{\pi}$
so that $K_{\u{\pi}}$ is the cyclotomic extension of $F$.  There is a natural action of
$\Gamma:=\Gal(K_{\u{\pi}}/K)$ on $\s=\s_F$ given by $\gamma u:=(1+u)^{\chi(\gamma)}-1$,
which uniquely extends to $S=S_F$.  We consider categories of modules $\Mod_{\scrS}^{\varphi,\Gamma,r}$
for $\scrS\in \{\s, S\}$ whose objects are Breuil--Kisin or Breuil modules $(M,\Fil^r M,\varphi_{M,r})$ over $\scrS$
that have the additional structure of a semilinear $\Gamma$-action that
is trivial on $M\otimes_{\scrS} W(k)$.
The resulting category $\Mod_{\s}^{\varphi,\Gamma,r}$ of $(\varphi,\Gamma)$-modules over $\s$
is equivalent to the category of {\em Wach modules}, as defined by \cite{Berger} (though
see \cite[\S4.5]{CaisLau} for the claimed equivalences), which classify lattices in crystalline
$G_F$-representations.
We are confident that the main results of the present paper can be readily adapted
to the above setting, thus giving a cohomological interpretation of Wach modules, at least
in Hodge--Tate weights at most $p-2$.

\subsection{Generalization to semistable schemes}

It is natural to ask to what extent the results of this paper
can be generalized to the case of {\em semistable reduction}, that is,
regular proper and flat schemes $\X$
over $\O_K$ with special fiber $\X_k$ that is a reduced normal corssings divisor
on $\X$.  It seems reasonable to guess that the analogue of Theorem \ref{thm-main-crystokisin}
using log-crystalline cohomology continues to hold.
In the case of low ramification $ei < p-1 $,
it should be straightforward to prove
that this is indeed the case using work of Caruso \cite{CarusoInvent}
(generalizing earlier work of Breuil \cite{Breuil} in the case $e=1$),
which provides the essential integral comparison isomorphisms
needed to adapt the arguments of \S\ref{pdivGone} to this setting.
To get results without restriction on the ramification
of $K$ requires the generalization of \cite[Theorem 1.8]{SBM}
to the case of semistable reduction,
which has recently been established by Cesnavicius and Koshikawa \cite{AinfSST}.

\bibliography{mybib}

\end{document}